\numberwithin{equation}{section}
\newcommand{\blank}[1]{{}}
\def\bbar#1{\setbox0=\hbox{$#1$}\dimen0=.2\ht0 \kern\dimen0 }
\newcommand{\defi}[1]{\textsf{#1}} 
\newenvironment{romanenum}{\hfill \begin{enumerate} }{\end{enumerate}}
\DeclareSymbolFont{cyrletters}{OT2}{wncyr}{m}{n}
\DeclareMathSymbol{\Sha}{\mathalpha}{cyrletters}{"58}
\newcommand{\Aff}{{\mathbb A}}
  \newcommand{\FF}{{\mathbb F}}
\newcommand{\PP}{{\mathbb P}}
 \newcommand{\QQ}{{\mathbb Q}}
\newcommand{\ZZ}{{\mathbb Z}}
\newcommand{\kbar}{{\bbar{k}}}
\def\bbar#1{\setbox0=\hbox{$#1$}\dimen0=.2\ht0 \kern\dimen0 \overline{\kern-\dimen0 #1}}
\newcommand{\Qbar}{{\overline{\mathbb Q}}} 
\newcommand{\Kbar}{\bbar{K}} 
 \newcommand{\FFbar}{\overline{\FF}}
\newcommand{\calE}{{\mathcal E}} 
\newcommand{\calF}{{\mathcal F}}
\newcommand{\calG}{{\mathcal G}}
\newcommand{\calH}{{\mathcal H}}
\newcommand{\calI}{{\mathcal I}}
\newcommand{\calP}{{\mathcal P}}
\newcommand{\calT}{{\mathcal T}}
\newcommand{\calU}{{\mathcal U}}
\newcommand{\calV}{{\mathcal V}}
\newcommand{\calW}{{\mathcal W}}
\newcommand{\calX}{{\mathcal X}}
\newcommand{\calY}{{\mathcal Y}}
\newcommand{\OO}{{\mathcal O}}
\DeclareMathOperator{\tr}{tr}
\DeclareMathOperator{\Frob}{Frob}
\DeclareMathOperator{\End}{End}
\DeclareMathOperator{\Aut}{Aut}
\DeclareMathOperator{\Gal}{Gal}
\DeclareMathOperator{\NS}{NS}
\DeclareMathOperator{\Spec}{Spec}
\newcommand{\un}{{\operatorname{un}}}
\newcommand{\et}{{\operatorname{\acute{e}t}}}
\newcommand{\sss}{{\operatorname{ss}}}
\newcommand{\GL}{\operatorname{GL}}
\newcommand{\SL}{\operatorname{SL}}
\newcommand{\PSL}{\operatorname{PSL}}
\newcommand{\lcm}{\operatorname{lcm}}
\newcommand{\SO}{\operatorname{SO}}
\newcommand{\Or}{\operatorname{O}}
\DeclareMathOperator{\spin}{sp}
\newcommand{\ang}[2]{\langle #1,#2\rangle}
\def\CC{\mathbb C}
\def\p{\mathfrak{p}}
\newtheorem{theorem}{Theorem}[section]
\newtheorem{lemma}[theorem]{Lemma}
\newtheorem{proposition}[theorem]{Proposition}
\theoremstyle{definition}
\theoremstyle{remark}
\newtheorem{remark}[theorem]{Remark}
\definecolor{webcolor}{rgb}{0,0,1}
\definecolor{webbrown}{rgb}{.6,0,0}
\definecolor{backgroundcolor}{rgb}{1,0.9,1}
\begin{document}
\title[]{The inverse Galois problem for $\text{PSL}_2(\FF_p)$}
\subjclass[2000]{Primary 12F12; Secondary 14J27, 11G05}
\author{David Zywina}
\address{School of Mathematics, Institute for advanced study, Princeton, NJ 08540}
\email{zywina@math.ias.edu}

\begin{abstract}
We show that the simple group $\PSL_2(\FF_p)$ occurs as the Galois group of an extension of the rationals for all primes $p\geq 5$.   We obtain our Galois extensions by studying the Galois action on the second \'etale cohomology groups of a specific elliptic surface.  
\end{abstract}

\maketitle

\section{Introduction}

\subsection{Statement}
The \defi{Inverse Galois Problem} asks whether every finite group $G$ occurs as the Galois group of an extension of $\QQ$, i.e., whether there is a finite Galois extension $K/\QQ$ such that $\Gal(K/\QQ)$ is isomorphic to $G$.  This problem is still wide open; even in the case of finite non-abelian simple groups which we now restrict our attention to.  Many special cases are known, including alternating groups and all but one of the sporadic simple groups.  Various families of simple groups of Lie type are known to occur as Galois groups of an extension of $\QQ$, but usually with congruences imposed on the cardinality of the fields.   See \cite{MR1711577} for background and many examples.  

We shall study the simple groups $\PSL_2(\FF_p):=\SL_2(\FF_p)/\{\pm I\}$ where $p\geq 5$ is a prime; their simplicity was observed by Galois, cf.~\cite[p.~411]{GaloisOeuvres}.  These are the ``simplest'' of the simple groups for which the inverse Galois problem has not been completely settled.  

Shih proved that the group $\PSL_2(\FF_p)$ occur as the Galois group of an extension of $\QQ$ if $2$, $3$ or $7$ is a quadratic non-residue modulo $p$, cf.~\cite{MR0332725} (see \S5.3 of \cite{MR2363329} for a lucid exposition).  The case where $5$ is a quadratic non-residue modulo $p$ is then a consequence of a theorem of Malle \cite{MR1199685}.    Clark \cite{MR2262856} showed that $\PSL_2(\FF_p)$ occurs for a finite number of additional primes $p$ (and also, assuming the Birch and Swinnerton-Dyer conjecture for elliptic curves over $\QQ$, primes $p\equiv 1\pmod{4}$ for which $11$ or $19$ is a quadratic non-residue modulo $p$).  Our main result is the following:

\begin{theorem} \label{T:main}
The simple group $\PSL_2(\FF_p)$ occurs as the Galois group of an extension of $\QQ$ for all primes $p\geq 5$.  
\end{theorem}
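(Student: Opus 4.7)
The plan is to construct, for each prime $p \ge 5$, a surjective Galois representation $G_\QQ \twoheadrightarrow \PSL_2(\FF_p)$ arising from the action on a suitable two-dimensional subquotient of $H^2_{\et}(\calE_\Qbar, \FF_p)$ for a single elliptic surface $\pi : \calE \to \PP^1$ defined over $\QQ$; the fixed field of the kernel of this surjection will then be the desired $\PSL_2(\FF_p)$-extension.

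I would proceed as follows. First, fix an explicit $\calE/\QQ$ with singular fibers placed over $\QQ$-rational points of $\PP^1$, chosen to force large geometric monodromy. Via the Leray spectral sequence, one obtains a natural direct summand $V_p := H^1\bigl(\PP^1_\Qbar,\, j_* \calF_p\bigr) \subseteq H^2_{\et}(\calE_\Qbar, \FF_p)$, where $\calF_p := R^1 \pi_* \FF_p$ on the smooth locus $U \subseteq \PP^1$ and $j : U \hookrightarrow \PP^1$. The stalks of $\calF_p$ are the $p$-torsion groups of the fibers of $\pi$, and its geometric monodromy is the mod-$p$ Galois representation attached to the generic fiber $E_\eta/\QQ(t)$; for $\calE$ well-chosen, the transvections at multiplicative fibers will generate all of $\SL_2(\FF_p)$ for every $p \ge 5$. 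From $V_p$ I would then extract a two-dimensional $G_\QQ$-stable subquotient $W_p$ and study the induced projective representation $\bar\rho_p : G_\QQ \to \PGL_2(\FF_p)$, whose image will contain $\PSL_2(\FF_p)$ by the monodromy computation.

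The main obstacle is the reverse containment $\im \bar\rho_p \subseteq \PSL_2(\FF_p)$ --- equivalently, that the determinant of a $\GL_2$-lift of $\bar\rho_p$ is a square character --- for \emph{every} $p \ge 5$ uniformly, without congruence restrictions on $p$. For the $p$-torsion of an elliptic curve over $\QQ$ the determinant is the cyclotomic character, which is never a square; this obstruction is precisely what forces the quadratic-residue conditions in the theorems of Shih and Clark. The crucial new ingredient is that passing from $H^1$ of an elliptic curve to $H^2$ of an elliptic surface introduces additional Tate twists and extra cohomological degrees of freedom via the placement of singular fibers; by judicious choice of $\calE$ and careful extraction of $W_p$, one should be able to arrange the determinant to be a square character uniformly in $p$, forcing $\im \bar\rho_p$ to be exactly $\PSL_2(\FF_p)$.
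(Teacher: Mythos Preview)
Your proposal has the right starting point --- an elliptic surface and its $H^2$ --- but the central step is a genuine gap. You claim the image of $\bar\rho_p$ ``will contain $\PSL_2(\FF_p)$ by the monodromy computation,'' but the geometric monodromy you describe is the action of $\pi_1(U_\Qbar)$ on the \emph{stalks} of $\calF_p$; knowing this is $\SL_2(\FF_p)$ pins down the dimension of $V_p$ via the Euler characteristic, yet says nothing about the size of the $\Gal_\QQ$-action on the \emph{cohomology} $V_p$. These are representations of different groups on different spaces. In the paper, surjectivity of $\vartheta_\ell\colon \Gal_\QQ \to \PSL_2(\FF_\ell)$ is the bulk of the work and is proved by ruling out each class of maximal subgroup (Borel, Cartan normalizer, $\mathfrak A_4/\mathfrak S_4/\mathfrak A_5$) separately, using explicit $L$-function values $P_3(T),P_5(T)$ together with delicate ramification input: a tame-inertia-weight bound at $\ell$ via Caruso's theorem on a semistable model of $X$, and a bound on the image of inertia at $2$ coming from $\ell$-independence of monodromy. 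None of this is the geometric monodromy of the local system.

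There is also a structural issue: a two-dimensional $\Gal_\QQ$-stable subquotient of $V_p$ need not exist, and for the paper's surface it does not exist when $p\equiv 3\pmod 4$. The route to $\PSL_2$ is instead through the orthogonal structure: one shows $\rho_\ell(\Gal_\QQ)\subseteq \Omega(V_\ell)$ by a spinor-norm computation, and the exceptional isomorphism $\Omega_4^+(\ell)/\{\pm I\}\cong \PSL_2(\FF_\ell)\times\PSL_2(\FF_\ell)$ then yields two projections $\vartheta_\ell,\vartheta_\ell'$. A two-dimensional picture appears only after restriction to $\Gal_{\QQ(i)}$, where a quaternion group of automorphisms of $X_{\QQ(i)}$ forces $V_\ell\cong W\oplus W$; this is what collapses $\vartheta_\ell'|_{\Gal_{\QQ(i)}}$ and singles out $\vartheta_\ell$. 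So your ``determinant must be a square'' problem is replaced by the spinor-norm and $\QQ(i)$-decomposition arguments, and the surjectivity question remains entirely untouched by your outline. (Also note the paper handles only $\ell\ge 11$ this way; $\ell=5,7$ are covered by Shih's earlier results.)
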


\begin{remark}
(ii) The Inverse Galois Problem is easy for the (non-simple) groups $\PSL_2(\FF_2)$ and $\PSL_2(\FF_3)$; note that they are isomorphic to $\mathfrak{S}_3$ and $\mathfrak{A}_4$, respectively.    

\noindent (i) In \cite{Zywina-orthogonal}, we will show that the simple groups $O_{2n+1}(p)$ and $O_{4n}^+(p)$ both occur as the Galois group of an extension of $\QQ$ for every prime $p\geq 5$ and integer $n\geq 2$ (with group notation as in \cite{Atlas}).   Moreover, these groups are shown to arise as the Galois group of a regular extension of the function field $\QQ(t)$.  This paper arose by trying to make progress in the excluded case $n=1$ where we have exceptional isomorphisms $O_3(p)\cong \PSL_2(\FF_\ell)$ and $O_{4}^+(p)\cong \PSL_2(\FF_p)\times \PSL_2(\FF_p)$. 
\end{remark}

\subsection{The representations} \label{SS:intro rep}
Let $\PP^1_\QQ$ be the projective line; it is obtained by completing the affine line $\Aff^1_\QQ:=\Spec \QQ[t]$ with a rational point $\infty$.   Consider the Weierstrass equation 
\begin{align} \label{E:main}
t(t-1)(t+1) \cdot y^2 = x(x+1)(x+t^2).
\end{align}
The equation (\ref{E:main}) defines a relative elliptic curve $f\colon E \to U$ where $U:=\PP^1_\QQ-\{0,1,-1,\infty\}$.   More precisely, $E$ is the projective subscheme of $\PP^2_U$ defined by the affine equation (\ref{E:main}) and $f$ is obtained by composing the inclusion $E\subseteq \PP^1_U$ with the structure map $\PP^1_U \to U$.  The points on $E$ off the affine model (\ref{E:main}) give our distinguished section of $f$.    For any field $K\supseteq \QQ$ and point $s \in U(K) \subseteq K$, the fiber $f^{-1}(s)$ is the elliptic curve over $K$ defined by specializing $t$ by $s$ in (\ref{E:main}).\\

Take any odd prime $\ell$.  Let $E[\ell]$ be the $\ell$-torsion subscheme of $E$.   The morphism $E[\ell]\to U$ arising from $f$ allows us to view $E[\ell]$ as a sheaf of $\FF_\ell$-modules on $U$; it is free of rank $2$.   Define the $\FF_\ell$-vector space
\[
V_\ell := H^1_{c}(U_\Qbar, E[\ell])
\]
where we are taking \'etale cohomology with compact support.  There is a natural action of the absolute Galois group $\Gal_\QQ:=\Gal(\Qbar/\QQ)$ on $V_\ell$ which we may express in terms of a representation
\[
\rho_\ell \colon \Gal_\QQ \to \Aut_{\FF_\ell}(V_\ell).
\]  
Theorem~\ref{T:main} will be a consequence of the following.
\begin{theorem} \label{T:main 2}
For each prime 	$\ell\geq 11$, the group $\PSL_2(\FF_\ell)$ is a quotient of $\rho_\ell(\Gal_\QQ)$.
\end{theorem}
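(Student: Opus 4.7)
The plan is to identify $V_\ell$ as a $4$-dimensional orthogonal Galois representation of plus type, so that $\rho_\ell$ factors through $\GO_4^+(\FF_\ell)$; the exceptional isomorphism $\POmega_4^+(\FF_\ell)\cong \PSL_2(\FF_\ell)\times \PSL_2(\FF_\ell)$ then delivers $\PSL_2(\FF_\ell)$ as a quotient provided the image is sufficiently large.

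First I would pin down $\dim V_\ell$. The sheaf $E[\ell]$ is lisse of rank $2$ on $U$; for $\ell$ large the singular fibers of the Weierstrass model at $t\in\{0,\pm 1,\infty\}$ are of multiplicative Kodaira type, so $E[\ell]$ is tamely ramified there and the Grothendieck--Ogg--Shafarevich formula yields $\chi_c(U_{\Qbar},E[\ell])=2\,\chi_c(U)=-4$. The family (\ref{E:main}) is, up to a quadratic twist by $t(t-1)(t+1)$, the pullback of the Legendre family $y^2=x(x-1)(x-\lambda)$ under $\lambda=t^2$; since the geometric monodromy of Legendre on $\ell$-torsion is the image of $\Gamma(2)\subset\SL_2(\ZZ)$ in $\SL_2(\FF_\ell)$ (which is all of $\SL_2(\FF_\ell)$ for odd $\ell$), the geometric action on $E[\ell]$ is irreducible, so $H^0_c$ and $H^2_c$ both vanish and $\dim_{\FF_\ell}V_\ell=4$.

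Next I would equip $V_\ell$ with an orthogonal structure. The Weil pairing gives $E[\ell]$ a symplectic form valued in $\FF_\ell(1)$, and Poincar\'e duality pairs $H^1_c(U_{\Qbar},-)$ with itself in odd degree by an alternating form; their composition yields a perfect \emph{symmetric} Galois-equivariant pairing $V_\ell\otimes V_\ell\to \FF_\ell(-1)$, so $\rho_\ell$ lands in $\GO(V_\ell)$. I would then compute the discriminant: the full rational $2$-torsion of the fibers (the three visible sections $x=0,-1,-t^2$ in (\ref{E:main})) should force the form to be of plus type, putting $\rho_\ell$ in $\GO_4^+(\FF_\ell)$. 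Consequently there is an a priori homomorphism from a subquotient of $\rho_\ell(\Gal_\QQ)$ into $\POmega_4^+(\FF_\ell)\cong \PSL_2(\FF_\ell)\times\PSL_2(\FF_\ell)$.

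The main obstacle, and the heart of the argument, is to show that this a priori map is surjective onto at least one $\PSL_2(\FF_\ell)$ factor; projection then yields the theorem. For this I would appeal to Dickson's classification of subgroups of $\PSL_2(\FF_\ell)$: for $\ell\geq 11$, every proper subgroup lies inside a Borel, a dihedral group, or one of $\mathfrak{A}_4$, $\mathfrak{S}_4$, $\mathfrak{A}_5$. I would rule these out by combining two pieces of data---the orders of local monodromy at the four punctures (the unipotent elements acting on $E[\ell]$ induce elements of order divisible by $\ell$ on $V_\ell$, immediately excluding the exceptional and dihedral subgroups once $\ell\geq 11$), and the characteristic polynomials of Frobenius at a few small primes of good reduction (computable from point counts on fibers of (\ref{E:main})) whose traces push the image out of any Borel. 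Making this analysis work uniformly for all primes $\ell\geq 11$---rather than on a case-by-case basis---is where the essential difficulty lies.
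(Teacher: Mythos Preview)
Your overall plan---orthogonal structure on $V_\ell$, plus type, the exceptional isomorphism $\POmega_4^+\cong\PSL_2\times\PSL_2$, then Dickson---matches the paper's. But the step where you propose to rule out small subgroups has a genuine gap, and a couple of the supporting claims are factually wrong.

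\textbf{The central error.} You propose to exclude the exceptional and dihedral subgroups by arguing that ``the unipotent elements acting on $E[\ell]$ induce elements of order divisible by $\ell$ on $V_\ell$.'' This conflates two different group actions. The local monodromy at the punctures $\{0,\pm 1,\infty\}$ lives in $\pi_1(U_{\Qbar})$ and acts on the stalks of the sheaf $E[\ell]$; but $V_\ell=H^1_c(U_{\Qbar},E[\ell])$ is a cohomology group on which $\Gal_\QQ$ acts via the outer action on $U_{\Qbar}$ and on the sheaf. The geometric monodromy has been ``integrated out'' in forming $V_\ell$; it does not supply elements of $\rho_\ell(\Gal_\QQ)$. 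So there is no direct mechanism by which loops around the punctures produce order-$\ell$ elements in the image of $\rho_\ell$.

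\textbf{A factual error.} The fibers at $0,\pm 1,\infty$ are not of multiplicative type: Tate's algorithm gives Kodaira types $I_4^*$ at $0,\infty$ and $I_2^*$ at $\pm 1$, all additive. The local monodromy on $E[\ell]$ is therefore quasi-unipotent ($-I$ times a unipotent), not unipotent. (Your Euler-characteristic computation still gives the right answer, but for a different reason: we are in characteristic $0$, so there is no wild ramification regardless of reduction type.) Also, your pairing lands in $\FF_\ell$, not $\FF_\ell(-1)$: Weil contributes a $(1)$ and Poincar\'e duality a $(-1)$.

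\textbf{What the paper does instead.} The ingredients you need that are not in your sketch are: (i) a decomposition of $V_\ell$ over $\QQ(i)$, obtained from an action of the quaternion group $Q_8\subseteq\Aut(E_{\QQ(i)})$ commuting with $\Gal_{\QQ(i)}$, which forces $V_\ell\cong W\oplus W$ with $W$ two-dimensional and self-dual---this is what makes the projection $\vartheta_\ell\colon\Gal_\QQ\to\PSL_2(\FF_\ell)$ tractable and gives $\tr(\vartheta_\ell(\Frob_p))=\pm b$ where $P_p(T)=(1+bT+T^2)^2$ for $p\equiv 1\pmod 4$; (ii) control of inertia at $\ell$ via tame inertia weights (Caruso's theorem applied to the smooth model of the surface $X$ over $\ZZ_\ell$), which bounds $\rho_\ell(\calI_\ell)/\rho_\ell(\calP_\ell)$ to be cyclic of order $1$, $\ell-1$, or $\ell+1$; and (iii) control of inertia at $2$, showing $\rho_\ell(g)^{12}$ is unipotent for $g\in\calI_2$. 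With these in hand, the Borel, Cartan-normalizer, and exceptional cases are eliminated using the explicit values $P_3(T)=1-\tfrac{2}{9}T^2+T^4$ and $P_5(T)=(1-\tfrac{2}{5}T+T^2)^2$ (and their fourth powers), not by producing order-$\ell$ elements.
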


In \S\ref{S:image}, we shall describe the group $\rho_\ell(\Gal_\QQ)$ for all $\ell \geq 11$.  Using the image of $\rho_\ell$, we will use Serre's modularity theorem in \S\ref{S:modularity} to show that our $\PSL_2(\FF_\ell)$-extensions arise from modular representations.

\subsection{The elliptic surface} \label{SS:intro surface}

 We can extend $f\colon E\to U$ to a morphism $\tilde{f}\colon X \to \PP^1_\QQ$ such that  $X$ is a smooth projective surface defined over $\QQ$ and $\tilde{f}$ is relatively minimal (so if $f'\colon X' \to \PP^1_\QQ$ was a morphism extending $f$ with $X'$ smooth and projective, then it would factor through $\tilde{f}$).   The surface $X$ is unique up to isomorphism.   

For each prime $\ell$, there is a natural Galois action on the \'etale cohomology group $H^2_{\et}(X_\Qbar,\FF_\ell)$ which can be expressed in terms of a representation 
\[
\phi_\ell\colon \Gal_\QQ \to \Aut_{\FF_\ell}(H^2_{\et}(X_{\Qbar},\FF_\ell)).   
\]
For odd $\ell$, we shall see that a certain Tate twist of $V_\ell$ is isomorphic to a composition factor of the $\FF_\ell[\Gal_\QQ]$-module $H^2_{\et}(X_{\Qbar},\FF_\ell)$; this will allow us to prove the following.

\begin{theorem} \label{T:main 3}
The group $\PSL_2(\FF_\ell)$ is a quotient of $\phi_\ell(\Gal_\QQ)$ for all $\ell \geq 11$.
\end{theorem}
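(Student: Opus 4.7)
The plan is to realize a Tate twist of $V_\ell$ as a $\Gal_\QQ$-equivariant composition factor of $H^2_{\et}(X_\Qbar, \FF_\ell)$, and then transfer the $\PSL_2(\FF_\ell)$-quotient supplied by Theorem \ref{T:main 2}. I begin with the Leray spectral sequence
\[
E_2^{p,q} = H^p(\PP^1_\Qbar, R^q \tilde f_* \FF_\ell) \;\Longrightarrow\; H^{p+q}_{\et}(X_\Qbar, \FF_\ell).
\]
Because $\PP^1$ has \'etale cohomological dimension $2$, the differential $d_2$ out of $E_2^{1,1}$ lands in $E_2^{3,0}=0$, the incoming $d_2$ comes from $E_2^{-1,2}=0$, and no further differentials can touch $E_2^{1,1}$; hence $E_\infty^{1,1}=E_2^{1,1}$ and $H^1(\PP^1_\Qbar, R^1 \tilde f_* \FF_\ell)$ appears as a graded piece of the Leray filtration on $H^2_{\et}(X_\Qbar, \FF_\ell)$.

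Next I identify the sheaf. Since $\tilde f\colon X \to \PP^1_\QQ$ is relatively minimal, a standard analysis (at least when $\ell$ is coprime to the orders of the component groups of the four singular fibers above $\{0, \pm 1, \infty\}$) produces an isomorphism $R^1 \tilde f_* \FF_\ell \cong j_*(R^1 f_* \FF_\ell)$, with $j\colon U \hookrightarrow \PP^1_\QQ$ the inclusion, while the Weil pairing identifies $R^1 f_* \FF_\ell$ with $E[\ell](-1)$ as lisse $\FF_\ell$-sheaves on $U$. The short exact sequence
\[
0 \to j_!(E[\ell](-1)) \to j_*(E[\ell](-1)) \to i_* \calC \to 0
\]
on $\PP^1_\Qbar$ (with $\calC$ a skyscraper sheaf supported on the four punctures) then yields, on taking cohomology, a $\Gal_\QQ$-equivariant surjection
\[
V_\ell(-1) = H^1_c(U_\Qbar, E[\ell](-1)) \twoheadrightarrow H^1(\PP^1_\Qbar, R^1 \tilde f_* \FF_\ell).
\]
Assuming that $V_\ell$ is $\Gal_\QQ$-irreducible (which should follow from the image description of $\rho_\ell$ to be given in \S\ref{S:image}) and that the right-hand side is nonzero (an Euler characteristic count using Grothendieck--Ogg--Shafarevich), the above surjection is an isomorphism, so $V_\ell(-1)$ occurs as a composition factor of $H^2_{\et}(X_\Qbar, \FF_\ell)$.

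For the transfer, put $N := \ker \phi_\ell$; then $N$ acts trivially on every composition factor of $H^2_{\et}(X_\Qbar, \FF_\ell)$, in particular on $V_\ell(-1)$, so $\rho_\ell(g) = \chi(g)\cdot \id$ for $g \in N$, where $\chi$ is the mod-$\ell$ cyclotomic character. Thus $\rho_\ell(N)$ is an abelian normal subgroup of $\rho_\ell(\Gal_\QQ)$. Any surjection $\rho_\ell(\Gal_\QQ) \twoheadrightarrow \PSL_2(\FF_\ell)$ from Theorem \ref{T:main 2} must annihilate $\rho_\ell(N)$, since otherwise its image would be a nontrivial abelian normal subgroup of the simple non-abelian group $\PSL_2(\FF_\ell)$. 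Hence the surjection factors through $\rho_\ell(\Gal_\QQ)/\rho_\ell(N)$, which is in turn a quotient of $\Gal_\QQ/N = \phi_\ell(\Gal_\QQ)$; composing realizes $\PSL_2(\FF_\ell)$ as a quotient of $\phi_\ell(\Gal_\QQ)$.

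The main obstacle is the middle step: making the identification $R^1 \tilde f_* \FF_\ell \cong j_* E[\ell](-1)$ rigorous for the four singular fibers of this particular equation, and confirming nonvanishing of $H^1(\PP^1_\Qbar, R^1 \tilde f_* \FF_\ell)$. Both pieces are standard for minimal elliptic surfaces but require knowing the Kodaira types at $\{0, \pm 1, \infty\}$, which can be extracted from the Weierstrass model \eqref{E:main}.
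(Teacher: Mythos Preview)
Your approach is essentially the paper's: use the Leray spectral sequence for $\tilde f$, identify $E_\infty^{1,1}$ with $V_\ell(-1)$, then transfer the $\PSL_2(\FF_\ell)$-quotient from Theorem~\ref{T:main 2}. The paper packages the first two steps as Lemma~\ref{L:X connection} and leaves the transfer as a one-line remark; your group-theoretic transfer (abelian normal subgroups of $\PSL_2(\FF_\ell)$ are trivial) is exactly what that one line unpacks to, and it is correct.

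There is, however, a genuine gap in the middle step. You invoke the $\Gal_\QQ$-irreducibility of $V_\ell$ to promote the surjection $V_\ell(-1)\twoheadrightarrow H^1(\PP^1_\Qbar,R^1\tilde f_*\FF_\ell)$ to an isomorphism, citing \S\ref{S:image}. But the image description there actually shows $V_\ell$ is \emph{not} $\Gal_\QQ$-irreducible when $\ell\equiv 1\pmod 4$: with $\varrho_\ell(\Gal_\QQ)=\calG_\ell$ as in Theorem~\ref{T:final}, if $c\in\FF_\ell$ satisfies $c^2=-1$ then the $2$-dimensional subspace $\{(v,cv):v\in\FF_\ell^2\}\subseteq\FF_\ell^4$ is stable under both $\calH_\ell$ and $\gamma$. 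So the argument as written fails for half of all primes.

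The fix is that you do not need irreducibility at all, and indeed the paper does not use it. All four bad fibers of $\tilde f$ are of Kodaira type $I_2^*$ or $I_4^*$; in particular $E_\eta$ has \emph{additive} reduction at each $s\in\{0,\pm1,\infty\}$, so the inertia invariants $E[\ell]_{\bbar\eta}^{I_s}$ vanish and hence $j_!(E[\ell](-1))=j_*(E[\ell](-1))$. Thus your skyscraper $\calC$ is zero and the surjection is already an isomorphism. (Equivalently, the $I_n^*$ fibers are simply connected, so the stalks of $R^1\tilde f_*\FF_\ell$ at the bad points vanish and $R^1\tilde f_*\FF_\ell\cong j_!(E[\ell](-1))$ directly.) This also dissolves your hedge about $\ell$ being coprime to the component group orders. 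Once you replace the irreducibility appeal by this observation, your argument goes through.
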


\section{Basic properties and an overview} \label{S:basics}

Take any odd prime $\ell$ and fix notation as in \S\ref{SS:intro rep}.   We now describe many useful facts concerning the representation $\rho_\ell$ of \S\ref{SS:intro rep}; they are straightforward \'etale cohomology computations and we will supply proofs in Appendix~\ref{SS:appendix}.

\subsection{Properties}

\begin{lemma} \label{L:basics}
\begin{romanenum}
\item   \label{I:dim 4}
The $\FF_\ell$-vector space $V_\ell$ has dimension $4$.  
\item  \label{I:pairing}   
There is a non-degenerate symmetric bilinear form $\ang{\,}{\,}\colon V_\ell \times V_\ell \to \FF_\ell$ such that 
\[
\ang{\rho_\ell(\sigma) v}{\rho_\ell(\sigma)w} = \ang{v}{w}
\] 
for all $\sigma\in \Gal_\QQ$ and $v,w \in V_\ell$.   
\end{romanenum}
\end{lemma}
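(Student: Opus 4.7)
The plan is to reduce both parts of Lemma~\ref{L:basics} to an explicit determination of the Kodaira types of the four singular fibers of $f\colon E\to U$, after which standard Grothendieck--Ogg--Shafarevich and Poincar\'e duality arguments close the proof.

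First I would put~(\ref{E:main}) in short Weierstrass form over $\QQ(t)$ and compute $c_4$, $c_6$ and $\Delta$; a direct calculation yields $\Delta = 16\, t^{10}(t-1)^8(t+1)^8$. Running Tate's algorithm at each $t \in \{0, 1, -1, \infty\}$, I expect every singular fiber to be of Kodaira type $I_n^{\ast}$, specifically $I_4^{\ast}$ at $0$ and $\infty$ and $I_2^{\ast}$ at $\pm 1$. This calculation is the technical heart of the argument and is where I expect most of the work to lie. Once it is established, the tame local monodromy at each puncture $x$ acts on $E[\ell]$ as $-1$ times a nontrivial unipotent element, hence with $-1$ as its unique eigenvalue; therefore, for every odd $\ell$, both the invariants $E[\ell]^{I_x}$ and the coinvariants $E[\ell]_{I_x}$ vanish. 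This single vanishing drives both parts of the lemma.

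For~(i), the local vanishing propagates: $E[\ell]^{\pi_1(U_{\Qbar})} \subseteq E[\ell]^{I_x} = 0$, so $H^0(U_{\Qbar}, E[\ell]) = 0$; in addition $H^0_c(U_{\Qbar}, E[\ell]) = 0$ since $U_{\Qbar}$ is connected and non-proper, and $H^2(U_{\Qbar}, E[\ell]) = 0$ by Artin vanishing on an affine curve. Poincar\'e duality combined with the Weil identification $E[\ell]^{\vee}(1) \isom E[\ell]$ then forces $H^2_c(U_{\Qbar}, E[\ell]) = 0$. Since we are in characteristic zero, Grothendieck--Ogg--Shafarevich has no Swan contribution and gives $\chi_c(U_{\Qbar}, E[\ell]) = 2 \cdot \chi_c(U_{\Qbar}) = -4$, so $\dim V_\ell = 4$.

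For~(ii), I would construct the bilinear form as the composition
\[
V_\ell \tensor V_\ell \xrightarrow{\smile} H^2_c(U_{\Qbar}, E[\ell] \tensor E[\ell]) \xrightarrow{\text{Weil}} H^2_c(U_{\Qbar}, \mu_\ell) \xrightarrow{\tr} \FF_\ell.
\]
Galois equivariance is immediate from functoriality of each factor. Symmetry follows from cancellation of signs: the cup product on $H^1 \tensor H^1$ is anti-commutative and the Weil pairing is alternating. For non-degeneracy, this pairing agrees with the perfect Poincar\'e duality pairing $H^1_c(U_{\Qbar}, E[\ell]) \times H^1(U_{\Qbar}, E[\ell]) \to \FF_\ell$ (perfect via the Weil identification) precomposed with the canonical forget-supports map $H^1_c \to H^1$, so it suffices to show this map is an isomorphism. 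The relevant segment of the excision exact sequence
\[
\bigoplus_{x} E[\ell]^{I_x} \to H^1_c(U_{\Qbar}, E[\ell]) \to H^1(U_{\Qbar}, E[\ell]) \to \bigoplus_{x} E[\ell]_{I_x}(-1)
\]
has both end terms zero by the $I_n^{\ast}$ computation, yielding the desired isomorphism and hence the non-degeneracy of the form.
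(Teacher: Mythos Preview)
Your proposal is correct and follows essentially the same route as the paper. Both arguments hinge on the Tate-algorithm computation that the fibers at $0,\infty$ are of type $I_4^*$ and those at $\pm 1$ of type $I_2^*$, whence $E[\ell]^{I_x}=E[\ell]_{I_x}=0$ for odd $\ell$; the paper packages this as the equality $j_!E[\ell]=j_*E[\ell]$ on $\PP^1_{\Qbar}$ and then runs the Euler-characteristic formula and Poincar\'e duality on $\PP^1$, while you stay on $U$, invoke Grothendieck--Ogg--Shafarevich (no Swan term in characteristic~$0$) for the dimension, and obtain non-degeneracy by showing the forget-supports map $H^1_c\to H^1$ is an isomorphism via the excision sequence---these are two equivalent phrasings of the same computation.
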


Let $\Or(V_\ell)$ be the group of automorphisms of the vector space $V_\ell$ which preserve the pairing $\ang{\,}{\,}$ of Lemma~\ref{L:basics}(\ref{I:pairing}).   We thus have a representation
\[
\rho_\ell\colon \Gal_\QQ \to \Or(V_\ell).
\]
Let $\SO(V_\ell)$ be the kernel of the determinant map $\det\colon \Or(V_\ell) \to \{\pm 1\}$.   The image of $\rho_\ell$ actually lies in this smaller group.

\begin{lemma} \label{L:SO image}
We have $\rho_\ell(\Gal_\QQ) \subseteq \SO(V_\ell)$.
\end{lemma}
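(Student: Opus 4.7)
My plan is to show that the character $\det \circ \rho_\ell : \Gal_\QQ \to \FF_\ell^*$ is trivial. The non-degenerate symmetric pairing from Lemma~\ref{L:basics}(ii) has values in the trivial Galois module $\FF_\ell$, so taking determinants in the identity $\rho_\ell(\sigma)^T M \rho_\ell(\sigma) = M$ (with $M$ the Gram matrix) immediately gives $(\det \rho_\ell(\sigma))^2 = 1$ for every $\sigma$. Hence $\det \circ \rho_\ell$ is a quadratic character, and the task reduces to ruling out the nontrivial possibility.

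To pin this character down I would use the local--global structure of $V_\ell$. With $j : U \hookrightarrow \PP^1_\QQ$ the open immersion and $i$ the inclusion of the punctures $S = \{0, 1, -1, \infty\}$, the distinguished triangle $j_! E[\ell] \to Rj_* E[\ell] \to i_* i^* Rj_* E[\ell]$ yields a four-term exact sequence of $\Gal_\QQ$-modules
\begin{align*}
0 \to \bigoplus_{s \in S} E[\ell]^{I_s} \to V_\ell \to H^1(U_{\Qbar}, E[\ell]) \to \bigoplus_{s \in S} E[\ell]_{I_s}(-1) \to 0,
\end{align*}
provided $H^0(U_{\Qbar}, E[\ell])$ and $H^2_c(U_{\Qbar}, E[\ell])$ both vanish---the former because $E$ carries no nonzero $\ell$-torsion section over $\Qbar(t)$ for odd $\ell$, the latter by Poincar\'e duality. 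A direct Weierstrass check, using that our equation is the quadratic twist of $y^2 = x(x+1)(x+t^2)$ by $t(t^2-1)$, shows each $s \in S$ carries Kodaira reduction type $I_n^*$; for odd $\ell$ the involution generating $I_s/I_{L_s}$ acts by $-1$ on $E[\ell]^{I_{L_s}} \cong \mu_\ell$ (where $L_s/K_s$ is the quadratic ramified extension trivializing the twist), forcing $E[\ell]^{I_s} = 0$ at every puncture. The sequence therefore collapses to a $\Gal_\QQ$-equivariant isomorphism $V_\ell \cong H^1(U_{\Qbar}, E[\ell])$, and combined with the Poincar\'e duality $V_\ell \cong H^1(U_{\Qbar}, E[\ell])^\vee$ (from the Weil pairing identification $E[\ell]^\vee(1) \cong E[\ell]$), this recovers the self-duality underlying Lemma~\ref{L:basics}(ii).

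To finish I would apply Deligne's formula expressing $\det R\Gamma_c(U_{\Qbar}, E[\ell])$ as $(\det E[\ell])^{\chi_c(U)}$ times a product of local $\epsilon$-factors at the four punctures. Since $\det E[\ell] = \chi_\ell$ by the Weil pairing and $\chi_c(U) = -2$, the global factor $\chi_\ell^{-2}$ is a square in $\FF_\ell^*$ and so projects trivially to $\{\pm 1\}$; the local $\epsilon$-factors are determined by the ramification of the quadratic twist $t(t^2-1)$ at each $s$, and using the symmetry $t \mapsto -t$ of the defining equation (which swaps $1 \leftrightarrow -1$ and fixes $\{0, \infty\}$) one pairs up the punctures to verify that their product is also a square modulo $(\FF_\ell^*)^2$. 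Hence $\det \rho_\ell = 1$. The main obstacle is the explicit computation of the local $\epsilon$-factors at the four punctures: one must verify carefully that the quadratic characters arising from the ramification of the twist at each $s$ combine into a trivial contribution under the global product.
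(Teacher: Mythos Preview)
Your opening reduction is correct and matches the paper: the orthogonal pairing forces $(\det\rho_\ell)^2=1$, so $\det\rho_\ell$ is a quadratic character of $\Gal_\QQ$, and the problem is to show it is trivial. Your second paragraph, verifying that $E[\ell]^{I_s}=0$ at each puncture (i.e.\ that $j_!(E[\ell])=j_*(E[\ell])$) via the additive $I_n^*$ reduction, is also correct and is exactly what the paper does in the appendix. But none of this is yet progress toward identifying the quadratic character.

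The gap is in your third paragraph. The determinant formula you invoke, expressing $\det R\Gamma_c(U_{\Qbar},E[\ell])$ as $(\det E[\ell])^{\chi_c(U)}$ times local $\epsilon$-factors, is standard for a curve over a \emph{finite} field (Deligne--Laumon), where the local factors are scalars. Over $\QQ$ the putative local factors at the punctures would themselves have to be characters of $\Gal_\QQ$, and you neither define them nor cite a result producing them. More seriously, your sentence ``the global factor $\chi_\ell^{-2}$ is a square in $\FF_\ell^\times$ and so projects trivially to $\{\pm 1\}$'' is not a valid step: $\chi_\ell^{-2}$ is a nontrivial character of order $(\ell-1)/2$, not a character with values in $\{\pm 1\}$, and there is no ``projection to $\{\pm 1\}$'' that makes it disappear. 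If a formula of the shape $\det V_\ell=\chi_\ell^{\pm 2}\cdot\prod_s\epsilon_s$ held, then since $\det V_\ell$ lands in $\{\pm 1\}$ the local factors would be forced to absorb the entire $\chi_\ell^{\mp 2}$; they certainly cannot be trivial or merely quadratic. You acknowledge that computing them is ``the main obstacle,'' and indeed nothing has been computed. The symmetry $t\mapsto -t$ pairs $1$ with $-1$ but fixes $0$ and $\infty$ separately, so it does not by itself force any cancellation among the four contributions.

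The paper's argument is far more direct and avoids $\epsilon$-factors entirely. From Lemma~\ref{L:inv compatibility} one has $\det(\rho_\ell(\Frob_p))=\varepsilon_p$, where $\varepsilon_p\in\{\pm 1\}$ is the sign in the functional equation $T^4 P_p(1/T)=\varepsilon_p P_p(T)$; in particular $\varepsilon_p$ is independent of $\ell$. By Chebotarev the character $\det\rho_\ell$ is therefore independent of $\ell$ and unramified at every odd prime, so its fixed field is one of $\QQ$, $\QQ(i)$, $\QQ(\sqrt{2})$, $\QQ(\sqrt{-2})$. Each of the three nontrivial options is excluded because $3$ or $5$ would be inert there, whereas the explicit computations $P_3(T)=1-\tfrac{2}{9}T^2+T^4$ and $P_5(T)=(1-\tfrac{2}{5}T+T^2)^2$ of Lemma~\ref{L:explicit} visibly satisfy $T^4P_p(1/T)=+P_p(T)$, i.e.\ $\varepsilon_3=\varepsilon_5=1$.
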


\subsection{$L$-functions} \label{SS:Lfunctions}

Fix an odd prime $p$ and let $E_p$ be the elliptic curve over $\FF_p(t)$ defined by (\ref{E:main}).  Take any closed point $x$ of $\calU_p:=\PP^1_{\FF_p}-\{0,1,-1,\infty\}=\Spec \FF_p[t,1/(t(t-1)(t+1))]$.   Let $\FF_x$ be the residue field of $x$ and define $\deg x := [\FF_x:\FF_p]$.   Let $E_{p,x}$ be the elliptic curve over $\FF_x$ obtained by reducing $E_p$.   Let $a_x$ be the integer that satisfies $|E_{p,x}(\FF_x)|=|\FF_x| -a_x +1$.   The \defi{$L$-function} of the elliptic curve $E_p$ is the power series
\begin{equation} \label{L:L ps defn}
L(T,E_p) = {\prod}_x (1-a_x T^{\deg x} + p^{\deg x} T^{2\deg x})^{-1} \in \ZZ[\![T]\!]
\end{equation}
where the product is over the closed points $x$ of $\calU_p$ (we do not need to include factors at $0$, $1$, $-1$ and $\infty$ since $E_p$ has additive reduction at these points).     Define $P_p(T):= L(T/p, E_p)$.

\begin{lemma}   \label{L:deg 4}
For each odd prime $p$, $P_p(T)$ is a polynomial of degree $4$ with coefficients in $\ZZ[1/p]$ that satisfies $T^4 P_p(1/T) = P_p(T)$.
\end{lemma}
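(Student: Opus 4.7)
The plan is to interpret $L(T, E_p)$ via $\ell$-adic cohomology for an auxiliary prime $\ell \neq p$ and extract each of the three claims from standard properties of \'etale cohomology. Set $\calF_\ell := R^1 f_* \QQ_\ell$; this is a lisse $\QQ_\ell$-sheaf of rank $2$ on $\calU_p$. For each closed point $x$, the local characteristic polynomial of geometric Frobenius $\Frob_x$ on the stalk $\calF_{\ell, \bar x}$ is precisely $1 - a_x T^{\deg x} + p^{\deg x} T^{2 \deg x}$, so the Grothendieck--Lefschetz trace formula gives
\begin{equation*}
L(T, E_p) \;=\; {\prod}_{i=0}^{2} \det\!\bigl(1 - T\,\Frob_p \mid H^i_c(\calU_{p, \Fbar_p}, \calF_\ell)\bigr)^{(-1)^{i+1}}.
\end{equation*}

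The groups $H^0_c$ and $H^2_c$ both vanish: the first because $\calU_p$ is affine and $\calF_\ell$ is lisse; the second by Poincar\'e duality combined with the Weil isomorphism $\calF_\ell^\vee(1) \cong \calF_\ell(2)$, which reduces its vanishing to showing that $\calF_\ell$ has no geometric monodromy invariants. The latter follows from a short analysis of the local monodromy at the four singular fibers $t \in \{0, 1, -1, \infty\}$ (where $E_p$ has additive reduction). Thus $L(T, E_p)$ equals the characteristic polynomial of $\Frob_p$ on $H^1_c(\calU_{p, \Fbar_p}, \calF_\ell)$. Its dimension is $4$---this can be obtained either by comparing with the mod-$\ell$ version $V_\ell$ in Lemma~\ref{L:basics}(\ref{I:dim 4}), or directly from the Euler--Poincar\'e formula on $\calU_p$ (generic rank $2$, $\chi(\calU_p) = -2$, with local contributions from the four bad fibers). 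Since the coefficients of $L(T, E_p)$ are a priori integers (from the Euler product), we conclude $L(T, E_p) \in \ZZ[T]$ of degree at most $4$, and therefore $P_p(T) \in \ZZ[1/p][T]$ after the substitution $T \mapsto T/p$.

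For the functional equation, Poincar\'e duality together with the Weil pairing forces the eigenvalues of $\Frob_p$ on $H^1_c(\calU_{p, \Fbar_p}, \calF_\ell)$ to come in pairs $\{\beta, p^2/\beta\}$. A direct manipulation then yields $p^4 T^4 L(1/(p^2 T), E_p) = L(T, E_p)$, which upon setting $S = T/p$ rewrites as $T^4 P_p(1/T) = P_p(T)$. Evaluating at $T = 0$ gives $P_p(0) = L(0, E_p) = 1$, and the functional equation then forces the coefficient of $T^4$ to equal $1$ as well; in particular $P_p$ has degree exactly $4$.

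The main technical step is the vanishing of $H^2_c$, equivalent to the absence of $\pi_1(\calU_{p,\Fbar_p})$-invariants on $\calF_\ell$---this is where the specific geometry of the elliptic curve (\ref{E:main}) enters, via the local monodromy representations at the four singular fibers. The dimension count and the functional equation are then formal consequences of \'etale cohomology, and the paper defers these verifications to Appendix~\ref{SS:appendix}.
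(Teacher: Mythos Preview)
Your argument that $P_p(T)$ is a degree-$4$ polynomial with coefficients in $\ZZ[1/p]$ is essentially the paper's: express $L(T,E_p)$ via the trace formula, kill $H^0_c$ and $H^2_c$, and read off the dimension of $H^1_c$. This matches Lemmas~\ref{L:App rho inf} and \ref{L:Pp new} in the appendix.

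The gap is in the functional equation. Poincar\'e duality together with the Weil pairing produces a non-degenerate \emph{symmetric} pairing on $H^1_c$ (the cup product on $H^1$ is alternating, the Weil pairing on the coefficient sheaf is alternating, and the two signs cancel---exactly as in the paper's proof of Lemma~\ref{L:basics}(\ref{I:pairing})). This places $\Frob_p$ in an orthogonal group, so the multiset of eigenvalues is stable under $\beta \mapsto p^{2}/\beta$; but it does \emph{not} force $\prod_i \beta_i = p^{4}$ rather than $-p^{4}$. Your ``direct manipulation'' silently assumes the former. What duality actually yields is only
\[
T^{4}\,P_p(1/T) \;=\; \varepsilon_p\,P_p(T), \qquad \varepsilon_p = \det\bigl(\Frob_p \mid H^1_c\bigr) \in \{\pm 1\},
\]
and for an orthogonal (as opposed to symplectic) pairing both signs are a priori possible.

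The paper resolves this sign by a genuinely different, global argument (\S A.3): the $\varepsilon_p$ assemble into a quadratic character $\alpha$ of $\Gal_\QQ$ which is $\ell$-independent and unramified away from $2$, hence its fixed field is one of $\QQ$, $\QQ(i)$, $\QQ(\sqrt{2})$, $\QQ(\sqrt{-2})$. The explicit computations $P_3(T)=1-\tfrac{2}{9}T^2+T^4$ and $P_5(T)=(1-\tfrac{2}{5}T+T^2)^2$ of Lemma~\ref{L:explicit} then give $\varepsilon_3=\varepsilon_5=1$, which rules out the three nontrivial fields and forces $\alpha=1$. This is why Lemmas~\ref{L:SO image}, \ref{L:deg 4}, and \ref{L:explicit} are proved together: the functional equation with sign $+1$ is equivalent to $\rho_\ell(\Gal_\QQ)\subseteq \SO(V_\ell)$, and neither can be obtained from local duality alone.
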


We will need to know the polynomials $P_p(T)$ for a few small primes $p$.   

\begin{lemma}
\label{L:explicit}
We have
$P_3(T)= 1 - 2/9\cdot T^2 + T^4$  and $P_5(T)= (1-2/5\cdot T+T^2)^2$.  
\end{lemma}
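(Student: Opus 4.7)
The plan is to determine the coefficients $c_1, c_2$ in the expression $P_p(T) = 1 + c_1 T + c_2 T^2 + c_1 T^3 + T^4$ (palindromic form given by Lemma~\ref{L:deg 4}) from the Euler product (\ref{L:L ps defn}) via point counts on the fibers of our family. Taking the logarithm of (\ref{L:L ps defn}), factoring each local factor as $(1 - \alpha_x T^{\deg x})(1 - \beta_x T^{\deg x})$, and using the Hasse--Weil identity $\alpha_x^{m/\deg x} + \beta_x^{m/\deg x} = p^m + 1 - |E_{p,x}(\FF_{p^m})|$ whenever $\deg x \mid m$, one obtains
\[
\log L(T, E_p) \;=\; \sum_{m \geq 1} \frac{S_m}{m}\, T^m, \qquad S_m \;=\; \sum_{s \in \calU_p(\FF_{p^m})} \bigl(p^m + 1 - |E_{p,s}(\FF_{p^m})|\bigr),
\]
where $E_{p,s}$ denotes the specialization of $E_p$ at $s$. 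Substituting $T \mapsto T/p$ and comparing coefficients with $\log(1 + c_1 T + c_2 T^2 + \cdots) = c_1 T + (c_2 - \tfrac{1}{2} c_1^2) T^2 + \cdots$ yields
\[
c_1 \;=\; \frac{S_1}{p}, \qquad c_2 \;=\; \frac{S_2}{2p^2} + \frac{c_1^2}{2}.
\]

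For $p = 3$: since $\calU_3(\FF_3) = \emptyset$, immediately $S_1 = 0$ and $c_1 = 0$. For $S_2$, enumerate the six elements of $\calU_3(\FF_9) = \FF_9 \setminus \{0, \pm 1\}$, grouped into three Frobenius orbits of size two. The involution $s \mapsto -s$ induces the quadratic twist by $-1$ on the fiber; since $-1$ is a square in $\FF_9$, this twist is trivial and $a_{3,-s} = a_{3,s}$, which further pairs the orbits. Direct point counts on the three distinct fibers over $\FF_9$ give $S_2 = -4$, and hence $c_2 = -4/(2\cdot 9) = -2/9$, as claimed.

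For $p = 5$: one has $\calU_5(\FF_5) = \{2, 3\}$, and since $3 = -2$ and $-1 = 2^2$ is a square in $\FF_5$, the two fibers are isomorphic over $\FF_5$. Point counting on $E_{5,2}\colon y^2 = x(x+1)(x-1)$ gives $|E_{5,2}(\FF_5)| = 8$, hence $a_{5,2} = a_{5,3} = -2$, $S_1 = -4$, and $c_1 = -4/5$. For $S_2$ one enumerates the $22$ points of $\calU_5(\FF_{25})$; the combined action of $\Gal(\FF_{25}/\FF_5)$ and the $s \mapsto -s$ twist symmetry (again trivial since $-1$ remains a square in $\FF_{25}$) cuts the number of distinct point counts down to a manageable handful of curves over $\FF_{25}$. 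The resulting value is $S_2 = 92$, whence $c_2 = 92/50 + (16/25)/2 = 54/25$, and one verifies directly that $1 - \tfrac{4}{5}T + \tfrac{54}{25}T^2 - \tfrac{4}{5}T^3 + T^4 = (1 - \tfrac{2}{5}T + T^2)^2$.

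The main obstacle is the computational load in the $p = 5$ case, which requires counting points on roughly ten elliptic curves over $\FF_{25}$. There is no conceptual difficulty, but the bookkeeping of Frobenius orbits, twist symmetries, and the finite-field arithmetic must be done carefully to avoid errors; a short computer verification is warranted.
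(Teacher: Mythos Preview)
Your approach is essentially the paper's own: compute the low-degree coefficients of $L(T,E_p)$ by point-counting on fibers over closed points of degree $\leq 2$, then use the functional equation to pin down the remaining coefficients. The numerical outcomes you report ($S_1=0$, $S_2=-4$ for $p=3$; $S_1=-4$, $S_2=92$ for $p=5$) agree with the paper's $L(T,E_3)=1-2T^2+O(T^3)$ and $L(T,E_5)=1-4T+54T^2+O(T^3)$.

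There is, however, a circularity in your organization. You invoke Lemma~\ref{L:deg 4} to assert upfront that $P_p(T)$ is palindromic, but in the paper's logical structure the proof of Lemma~\ref{L:deg 4} (specifically that the sign in the functional equation is $+1$) \emph{depends} on Lemma~\ref{L:explicit}. What is available a priori is only $T^4P_p(1/T)=\varepsilon_p\, P_p(T)$ for some $\varepsilon_p\in\{\pm1\}$; the explicit values of $P_3$ and $P_5$ are then used to show $\varepsilon_3=\varepsilon_5=1$, which feeds back into the proof of Lemma~\ref{L:deg 4} for general $p$. The paper's proof of Lemma~\ref{L:explicit} therefore does not assume palindromicity: it computes $c_1,c_2$ exactly as you do, observes that $c_2\neq 0$ forces $\varepsilon_3=\varepsilon_5=1$ (since $\varepsilon_p=-1$ would give $c_2=-c_2$ on comparing $T^2$-coefficients), and only then reads off $c_3,c_4$. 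Your argument becomes correct with this small reorganization: compute $c_1,c_2$ first, deduce $\varepsilon_p=1$ from $c_2\neq 0$, then conclude.
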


For each integer $n$ and odd prime $p$,  define the polynomial $P^{(n)}_p(T)= \prod_{i=1}^4(1-\alpha_i^n T) \in \QQ[T]$ where $P_p(T)=\prod_{i=1}^4(1-\alpha_i T)$ with $\alpha_i\in \Qbar$.  Using Lemma~\ref{L:explicit}, it is easy to verify that
\begin{align} \label{E:P4 explicit}
P_3^{(2)}(T)&=(1-2/9\cdot T+T^2)^2, \\[-0.3 em]
P_3^{(4)}(T)&=(1 + 158/81\cdot T + T^2)^2 \quad \text{ and }   \quad P_5^{(4)}(T)=(1 - 866/625\cdot T + T^2)^2.    \notag
\end{align}

\subsection{Compatibility} \label{SS:compatibility}

Here, and throughout the paper, $\Frob_p$ denotes an \emph{arithmetic} Frobenius automorphism in $\Gal_\QQ$ corresponding to $p$.  The following says that our representations $\rho_\ell$ are compatible and links them to the polynomials of \S\ref{SS:Lfunctions}.

\begin{lemma} \label{L:compatibility}
For each prime $p\nmid 2\ell$, the representation $\rho_\ell$ is unramified at $p$ and we have
\[
\det(I- \rho_\ell(\Frob_p) T) \equiv P_p(T) \pmod{\ell}.
\]
\end{lemma}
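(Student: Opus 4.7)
The plan is to apply the Grothendieck--Lefschetz trace formula for $L$-functions of lisse sheaves on smooth curves to the sheaf $E[\ell]$ on $\calU_p$, and to match the resulting Euler factors with those of $P_p(T)$.

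First I would spread $f \colon E \to U$ out to an integral model: let $\calU \subseteq \PP^1_{\ZZ[1/2]}$ be the complement of the four sections $\{0,1,-1,\infty\}$, and let $\calE \to \calU$ be the smooth projective model attached to the Weierstrass equation~(\ref{E:main}); once $2$ is inverted and the four bad sections are removed the discriminant becomes a unit, so $\calE \to \calU$ is a smooth proper family of elliptic curves. For any odd prime $\ell$, the $\ell$-torsion subscheme $\calE[\ell]$ is finite \'etale over $\calU_{\ZZ[1/(2\ell)]}$, so it defines a lisse sheaf of free rank-$2$ $\FF_\ell$-modules there. Smooth base change along the inclusion of a strictly henselian neighborhood of a prime above any $p \nmid 2\ell$ then yields a Galois-equivariant isomorphism
\[
V_\ell = H^1_c(U_{\Qbar}, E[\ell]) \;\cong\; H^1_c(\calU_{p,\Fbar_p}, \calE[\ell]|_{\calU_p})
\]
on which inertia at $p$ acts trivially. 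This proves the unramifiedness and identifies $\rho_\ell(\Frob_p)$ with arithmetic Frobenius on the right-hand side.

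Second, I would collect the other compactly-supported cohomology groups. One has $H^0_c(\calU_{p,\Fbar_p}, \calE[\ell]) = 0$ since $\calU_p$ is affine. For $H^2_c$: Lemma~\ref{L:basics}(\ref{I:dim 4}) and the isomorphism above give $\dim H^1_c = 4$, while the Grothendieck--Ogg--Shafarevich formula gives $\chi_c(\calU_{p,\Fbar_p}, \calE[\ell]) = \chi_c(\calU_p) \cdot 2 = -4$ (local ramification at the four punctures is tame because $\ell \neq p$, so all Swan conductors vanish). Hence $\dim H^2_c = 0$.

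With only $H^1_c$ surviving, Grothendieck's cohomological formula for sheaf $L$-functions gives
\[
{\prod}_{x \in |\calU_p|} \det\!\bigl(1 - \Frob_x^{\text{geom}} T^{\deg x} \,\big|\, \calE[\ell]_{\bar x}\bigr)^{-1} = \det\!\bigl(1 - \Frob_p^{\text{geom}} T \,\big|\, H^1_c(\calU_{p,\Fbar_p}, \calE[\ell])\bigr).
\]
At each closed point $x$ with $q_x := p^{\deg x}$, the geometric Frobenius acts on the stalk $\calE[\ell]_{\bar x}$ as the inverse of the $q_x$-power Frobenius endomorphism of $E_{p,x}$; if that endomorphism has eigenvalues $\alpha_x, \beta_x$ with $\alpha_x + \beta_x = a_x$ and $\alpha_x \beta_x = q_x$, then $\det(1 - \Frob_x^{\text{geom}} T^{\deg x} \mid \calE[\ell]_{\bar x}) \equiv 1 - (a_x/q_x) T^{\deg x} + (1/q_x) T^{2 \deg x} \pmod{\ell}$, which is exactly the $x$-local factor of $L(T/p, E_p) = P_p(T)$. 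Taking the product mod $\ell$ gives $P_p(T) \equiv \det(1 - \Frob_p^{\text{geom}} T \mid H^1_c) \pmod{\ell}$, and under the isomorphism above $\Frob_p^{\text{geom}}$ translates to $\rho_\ell(\Frob_p)^{-1}$ on $V_\ell$. Since $\rho_\ell(\Frob_p)$ preserves the symmetric bilinear form of Lemma~\ref{L:basics}(\ref{I:pairing}), its characteristic polynomial is invariant under $g \mapsto g^{-1}$, so $\det(I - \rho_\ell(\Frob_p) T) = \det(I - \rho_\ell(\Frob_p)^{-1} T) \equiv P_p(T) \pmod{\ell}$, as required. The main subtlety throughout is keeping the geometric versus arithmetic Frobenius conventions straight and tracking the compensating $T \mapsto T/p$ scaling that separates the ``motivic'' $L$-function $L(T, E_p)$ from the sheaf-theoretic $L$-function computed by the trace formula, a mismatch that is absorbed in the final step by the orthogonal structure on $V_\ell$.
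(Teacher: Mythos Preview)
Your proposal is correct and follows essentially the same strategy as the paper's proof in Appendix~\ref{SS:appendix}: spread out over $\ZZ[1/(2\ell)]$, invoke base change and the Grothendieck--Lefschetz trace formula to identify $P_p(T)\pmod{\ell}$ with $\det(I-\rho_\ell(\Frob_p^{-1})T)$, and then use the orthogonal structure on $V_\ell$ to pass from $\Frob_p^{-1}$ to $\Frob_p$. The paper carries out the computation first with $\ZZ_\ell$-coefficients (via $T_\ell(E)$ and the module $M_\ell$) before reducing mod $\ell$, and proves the vanishing of $H^2_c$ by Poincar\'e duality plus big monodromy (Lemma~\ref{L:H2 vanish}) rather than via the Euler characteristic, but these are minor packaging differences rather than a different route.
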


As a consequence of Lemma~\ref{L:compatibility}, we find that for each integer $n$ and prime $p\nmid 2\ell$ we have $\det(I- \rho_\ell(\Frob_p)^n T) \equiv P_p^{(n)}(T) \pmod{\ell}$.

\subsection{Connection with the surface $X$}

Let $X$ be the surface of \S\ref{SS:intro surface}.  We now related the $\FF_\ell[\Gal_\QQ]$-modules $V_\ell$ and $H^2_{\et}(X_\Qbar, \FF_\ell)$.   It will be convenient to work with the Tate twist $H^2_{\et}(X_\Qbar, \FF_\ell)(1)=H^2_{\et}(X_\Qbar, \FF_\ell(1))$.

\begin{lemma} \label{L:X connection}
Suppose $\ell\geq 7$.   The semi-simplification of $H^2_{\et}(X_\Qbar, \FF_\ell(1))$ as an $\FF_\ell[\Gal_\QQ]$-submodule is isomorphic to $V_\ell \oplus \FF_\ell^{30}$.
\end{lemma}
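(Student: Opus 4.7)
My plan is to use the Leray spectral sequence for the elliptic fibration $\tilde f\colon X \to \PP^1_\QQ$,
\[
E_2^{p,q} = H^p_{\et}(\PP^1_{\Qbar}, R^q \tilde{f}_* \FF_\ell) \Longrightarrow H^{p+q}_{\et}(X_\Qbar, \FF_\ell),
\]
which is $\Gal_\QQ$-equivariant, and to analyze the three Tate-twisted terms $E_2^{p,q}(1)$ with $p + q = 2$.  First, I would identify the direct-image sheaves: $R^0 \tilde f_* \FF_\ell = \FF_\ell$ (geometric connectedness of fibers); $R^1 f_* \FF_\ell \cong E[\ell](-1)$ on $U$ (via the Weil pairing); and the crucial step,
\[
R^1 \tilde f_* \FF_\ell \cong j_*(E[\ell](-1))
\]
on $\PP^1_\QQ$, which follows from the local invariant cycle theorem for relatively minimal elliptic fibrations---the hypothesis $\ell \geq 7$ guarantees $\ell$ is coprime to the orders of the component groups of the Néron model at each bad fiber.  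Finally, the sheaf $R^2 \tilde f_* \FF_\ell$ has generic stalk $\FF_\ell(-1)$ and stalk $\FF_\ell(-1)^{c_x}$ at each bad point $x$, where $c_x$ is the number of geometric irreducible components of the fiber over $x$.

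Next, I would compute the relevant $E_2$ terms.  Directly, $E_2^{2,0}(1) = H^2(\PP^1_\Qbar, \FF_\ell(1)) = \FF_\ell$, and $E_2^{0,2}(1) = H^0(\PP^1_\Qbar, R^2\tilde f_* \FF_\ell(1))$ is a sum of trivial $\Gal_\QQ$-representations of total dimension $1 + \sum_x(c_x - 1)$, using that the bad points $0, \pm 1, \infty$ are $\QQ$-rational and the irreducible components of each bad fiber are defined over $\QQ$.  For the middle term $E_2^{1,1}(1) = H^1(\PP^1_\Qbar, j_* E[\ell])$, the excision short exact sequence $0 \to j_! E[\ell] \to j_* E[\ell] \to \bigoplus_x (i_x)_* E[\ell]^{I_x} \to 0$ on $\PP^1_\Qbar$ yields the four-term exact sequence
\[
0 \to E[\ell]^{\pi_1(U_\Qbar)} \to \bigoplus_{x \in \PP^1 - U} E[\ell]^{I_x} \to V_\ell \to E_2^{1,1}(1) \to 0,
\]
so $E_2^{1,1}(1)$ is a quotient of $V_\ell$ by a subrepresentation whose semi-simplification is a sum of trivials.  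Standard degeneration of the Leray spectral sequence for the proper morphism $\tilde f$ then yields that $H^2_{\et}(X_\Qbar, \FF_\ell(1))^{\sss}$ consists of the composition factors of $V_\ell$ together with exactly
\[
2 + {\textstyle\sum_x(c_x - 1)} + {\textstyle\sum_x}\dim E[\ell]^{I_x} - \dim E[\ell]^{\pi_1(U_\Qbar)}
\]
trivial factors.  A direct Tate's-algorithm computation on a minimal Weierstrass model of (\ref{E:main}) to determine the Kodaira types at $t = 0, \pm 1, \infty$ (giving $\sum_x c_x$), together with the vanishing of $E[\ell]^{\pi_1(U_\Qbar)}$ (from large geometric monodromy), will show this count is $30$.

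The principal obstacle is the Tate's-algorithm calculation, which requires a careful change of variables to obtain a minimal Weierstrass model at each bad point, and which in turn provides the numerical inputs $c_x$ and $\dim E[\ell]^{I_x}$.  Secondary subtleties include verifying that all irreducible components of each singular fiber are defined over $\QQ$ (so that the Galois action on them is trivial in semi-simplification) and the vanishing of $E[\ell]^{\pi_1(U_\Qbar)}$ (ensured by sufficient largeness of the geometric monodromy of the elliptic fibration, which is a standard input used elsewhere in the paper).
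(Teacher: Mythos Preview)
Your approach is correct and gives a genuine alternative to the paper's argument.  Both routes identify $E_2^{1,1}(1) = H^1(\PP^1_\Qbar, R^1\tilde f_*\FF_\ell(1))$ with $V_\ell$ via the Leray spectral sequence: since the bad fibers have Kodaira type $I_2^*$ or $I_4^*$ (additive reduction), one has $E[\ell]^{I_x} = 0$ at each bad point, so $j_!E[\ell] = j_*E[\ell]$ and your four-term excision sequence collapses to an isomorphism $V_\ell \cong E_2^{1,1}(1)$.  (The paper phrases this step slightly differently, noting that fibers of type $I_n^*$ are simply connected so the stalk of $R^1\tilde f_*\FF_\ell$ at bad points vanishes.)

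The real divergence is in how the thirty trivial factors are produced.  You extract them directly from $E_2^{2,0}(1)$ and $E_2^{0,2}(1)$, counting fiber components via Tate's algorithm and staying entirely with $\FF_\ell$-coefficients.  The paper instead works first over $\QQ_\ell$: it invokes Shioda's decomposition of $H^2_{\et}(X_\Qbar, \QQ_\ell(1))$ as a $4$-dimensional piece plus the image under the cycle map of the rank-$30$ sublattice $\calT \subseteq \NS(X_\Qbar)$ spanned by a section, a smooth fiber, and all fiber components, on which $\Gal_\QQ$ acts trivially.  It then reduces modulo $\ell$ using torsion-freeness of $H^2_{\et}(X_\Qbar, \ZZ_\ell)$ (an appeal to Cox--Zucker).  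Finally, to match the residual $4$-dimensional piece with $V_\ell$, the paper uses a trick: the explicit polynomial $P_3(T) = 1 - \tfrac{2}{9}T^2 + T^4$ is separable mod $\ell$ with no root equal to $1$ (this is where $\ell \geq 7$ enters), so $V_\ell$ is semisimple with no trivial constituent and must coincide with the non-trivial part.  Your route bypasses both the torsion-freeness input and the Frobenius trick; indeed it appears to need only $\ell$ odd.

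One point to tighten: degeneration at $E_2$ is not automatic for proper morphisms, so ``standard degeneration'' needs justification.  Here it follows because the only differentials touching the $p+q=2$ diagonal are $d_2\colon E_2^{0,1} \to E_2^{2,0}$ and $d_2\colon E_2^{0,2} \to E_2^{2,1}$, and the groups $E_2^{0,1}$ and $E_2^{2,1}$ are $H^0$ and $H^2_c$ of $E[\ell]$ on $U_\Qbar$, both of which vanish by the big-monodromy input you already cite.
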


\subsection{Brief overview}
We now give some motivation for the rest of the paper; this will not be used later.  Fix a prime $\ell\geq 11$.  In light of Lemma~\ref{L:compatibility}, our first approach in trying to compute the image of $\rho_\ell$ was to compute $P_p(T)$ for many $p$.  The following proposition describes the pattern we observed; we will prove it in Propositions~\ref{P:P form mod 1} and \ref{P:P form mod 3}.

\begin{proposition} \label{P:pattern}
Let $p$ be an odd prime.
\begin{romanenum}
\item \label{P:pattern a}
If $p\equiv 1 \pmod{4}$, then $P_p(T)=(1+b T +T^2)^2$ for a unique $b\in \ZZ[1/p]$.
\item
If $p\equiv 3 \pmod{4}$, then $P_p(T)=1+(b^2-2) T^2 +T^4$ for a unique non-negative $b\in \ZZ[1/p]$.
\end{romanenum}
\end{proposition}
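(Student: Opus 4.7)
My plan is to exhibit two explicit automorphisms of the elliptic surface whose induced operators on $V_\ell$ generate a copy of the quaternion group $Q_8$, giving $V_\ell \otimes \overline{\QQ_\ell} \cong U \otimes W$ for two $2$-dimensional spaces (working throughout with the natural $\ell$-adic lift so that $P_p(T) \in \ZZ[1/p][T]$ arises as a genuine characteristic polynomial). I would first verify by direct substitution that $\sigma\colon (t,x,y) \mapsto (-t, x, iy)$ is an automorphism of the elliptic surface over $\QQ(i)$ and that $\tau\colon (t,x,y) \mapsto (-1/t,\, x/t^2,\, y/t)$ is an automorphism over $\QQ$. Both satisfy $\sigma^2 = \tau^2 = [-1]$ (fiberwise inversion) and $\sigma\tau = [-1]\circ\tau\sigma$. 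Since $[-1]$ acts as $-1$ on $R^1 f_* \QQ_\ell$, the induced operators on $V_\ell$ satisfy $(\sigma^*)^2 = (\tau^*)^2 = -\id$ and $\sigma^*\tau^* = -\tau^*\sigma^*$, so $\langle \sigma^*, \tau^* \rangle \cong Q_8$ inside $\GL(V_\ell)$.

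The unique faithful irreducible representation of $Q_8$ over $\overline{\QQ_\ell}$ is $2$-dimensional and symplectic, so $V_\ell \otimes \overline{\QQ_\ell} \cong U \otimes W$ with $Q_8$ acting on $U$ via this representation and trivially on $W$. Because $\sigma, \tau$ are automorphisms of the algebraic surface, they preserve the cohomological pairing of Lemma~\ref{L:basics}(ii); hence this pairing is $Q_8$-invariant and decomposes as $\omega \otimes \phi$, where $\omega$ is the unique-up-to-scalar $Q_8$-invariant form on $U$ (symplectic, since $Q_8 \subset \operatorname{Sp}(U)$), forcing $\phi$ to be symplectic on $W$ as well.

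For case (i), with $p \equiv 1 \pmod 4$: here $i \in \FF_p$, so $\sigma$ is defined over $\FF_p$ and $\Frob_p$ commutes with both $\sigma^*$ and $\tau^*$. Thus $\Frob_p \in \End_{Q_8}(V_\ell \otimes \overline{\QQ_\ell}) = 1_U \otimes \End(W)$, giving $\Frob_p = 1_U \otimes F$ for some $F \in \End(W)$; preservation of the pairing then forces $F \in \operatorname{Sp}(W) = \SL(W)$, so $\det(I - FT \mid W) = 1 + bT + T^2$ with $b = -\tr F$. Hence $P_p(T) = \det(I - FT \mid W)^{\dim U} = (1 + bT + T^2)^2$. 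Since $P_p \in \QQ[T]$ is a perfect square over $\overline{\QQ_\ell}$, its square root already lies in $\QQ[T]$, so $b \in \QQ$; combined with $b^2 = c_2 - 2 \in \ZZ[1/p]$ (where $c_2$ is the $T^2$-coefficient of $P_p$), a denominator argument gives $b \in \ZZ[1/p]$.

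For case (ii), with $p \equiv 3 \pmod 4$: $\Frob_p$ still commutes with $\tau^*$, but it sends $i \mapsto -i$ and hence $\sigma \mapsto \sigma^{-1}$, so $\Frob_p \sigma^* = -\sigma^* \Frob_p$. A small calculation in $\End(U) \otimes \End(W)$ identifies the operators commuting with $\tau^*$ and anti-commuting with $\sigma^*$ as $\tau^* \otimes \End(W)$, so $\Frob_p = \tau^* \otimes G$ with $G \in \SL(W)$ (again by the symplectic argument). Writing the eigenvalues of $G$ as $\mu, \mu^{-1}$, those of $\Frob_p$ on $V_\ell \otimes \overline{\QQ_\ell}$ are $\pm i\mu, \pm i\mu^{-1}$, yielding
\[
P_p(T) = (1 + \mu^2 T^2)(1 + \mu^{-2} T^2) = 1 + (b^2 - 2) T^2 + T^4 \quad\text{with}\quad b = \mu + \mu^{-1}.
\]
To see $b \in \QQ$, view $V_\ell$ as a $\QQ(i)$-vector space via $\tau^*$; then $\sigma^*$ becomes $\QQ(i)$-conjugate-linear, and the anti-commutation with $\Frob_p$ translates (in coordinates) to $M^{-1} \Frob_p M = -\overline{\Frob_p}$, which forces $\tr_{\QQ(i)}(\Frob_p) \in i\QQ$. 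This trace equals $ib$, so $b \in \QQ$, and then $b \in \ZZ[1/p]$ and the uniqueness of the non-negative $b$ follow exactly as in case (i). The main obstacle is discovering the second automorphism $\tau$; once both are in hand, everything reduces to a clean application of $Q_8$-representation theory and Galois descent.
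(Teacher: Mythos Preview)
Your approach is genuinely different from the paper's and more direct for this proposition. The paper works modulo $\ell$: it uses the $Q_8$-action (Lemma~\ref{L:group Q}) only to constrain the $\Gal_{\QQ(i)}$-module structure of $V_\ell$ (Proposition~\ref{P:V semisimplification}), then needs Caruso's theorem on tame inertia weights plus the explicit values of $P_3$ and $P_5$ to establish $\det\beta|_{\Gal_{\QQ(i)}} = 1$ (Lemma~\ref{L:beta image eq1}) before reading off the shape of $\det(I-\rho_\ell(\sigma)T)$. You bypass all of this by exploiting the full $Q_8$-isotypic decomposition $V_\ell \otimes_{\QQ_\ell} \Qbar_\ell \cong U \otimes W$ and the observation that the unique $Q_8$-invariant form on $U$ is symplectic, which forces $F, G \in \SL(W)$ immediately from the orthogonal pairing. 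A key point the paper does not isolate is that your $\tau$ is defined over $\QQ$ (the paper's generators $\alpha_1, \alpha_2$ are both only over $\QQ(i)$; your $\tau$ equals $\alpha_1^{-1}\alpha_2$), giving direct control over $\Frob_p$ for $p\equiv 3\pmod 4$.

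There is, however, a gap in your rationality argument for case~(ii). When you ``view $V_\ell$ as a $\QQ(i)$-vector space via $\tau^*$,'' the ambient scalars are $\QQ_\ell$, so the algebra $\QQ_\ell[\tau^*]$ is a field only when $\ell\equiv 3\pmod 4$ (otherwise it splits as $\QQ_\ell\times\QQ_\ell$). More to the point, your trace computation yields $\tr_{\QQ_\ell[i]}(\Frob_p)\in i\,\QQ_\ell$, hence $b\in\QQ_\ell$---not $b\in\QQ$ as you assert. Knowing $b^2 = c+2\in\ZZ[1/p]$ and $b\in\QQ_\ell$ for a single $\ell$ does not force $b\in\QQ$. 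The fix is to run the argument for every odd $\ell\neq p$, concluding that $c+2$ is a square in $\QQ_\ell$ for all such $\ell$; a Chebotarev argument for $\QQ(\sqrt{c+2})/\QQ$ then gives $c+2\in\QQ^2$. This is exactly the ``vary $\ell$'' step in the paper's own proof of Proposition~\ref{P:P form mod 3}, so no new idea is required---but it must be made explicit.
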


That the shape of $P_p(T)$ seems to depend on the value of $p$ modulo $4$ suggests that we study the action of $\Gal_{\QQ(i)}$ on $V_\ell$; this is done in \S\ref{S:over Q(i)}.   Using the automorphisms of the surface $X_{\QQ(i)}$, we will see that all the irreducible $\FF_\ell[\Gal_{\QQ(i)}]$-submodules of $V_\ell$ have dimension $1$ or $2$.  We will eventually show that $V_\ell$ is isomorphic to $W\oplus W$ as an $\FF_\ell[\Gal_{\QQ(i)}]$-module where $W$ is irreducible of dimension $2$.    

Let $\Omega(V_\ell)$ be the commutator subgroup of $\Or(V_\ell)$; it is an index $2$ subgroup of $\SO(V_\ell)$.    We will show that $\rho_\ell(\Gal_\QQ) \subseteq \Omega(V_\ell)$.  The group $\Omega(V_\ell)$ contains $-I$ and there is an \emph{exceptional isomorphism} $\varphi\colon \Omega(V_\ell)/\{\pm I\} \xrightarrow{\sim} \PSL_2(\FF_\ell) \times \PSL_2(\FF_\ell)$.  We thus have a representation
\[
\xymatrix{
\vartheta_\ell\colon \Gal_\QQ \ar[r]^-{\rho_\ell} & \Omega(V_\ell) \ar@{->>}[r] & \Omega(V_\ell)/\{\pm1\} \ar[r]^-{\varphi} & \PSL_2(\FF_\ell) \times \PSL_2(\FF_\ell) \ar[r]^-{pr} & \PSL_2(\FF_\ell)
}
\]
where $pr$ is one of the two projections.    Making appropriate choices of $\varphi$ and $pr$, for each prime $p\nmid 2\ell$ we will have 
\[
\tr( \vartheta_\ell(\Frob_p) ) = \pm b
\]
where $b$ is the value from Proposition~\ref{P:pattern} modulo $\ell$.

The main task of this paper is to show that $\vartheta_\ell\colon \Gal_\QQ \to \PSL_2(\FF_\ell)$ is surjective (this will be done in \S\ref{S:main proof}).   The proof is based on Serre's open image theorem for non-CM elliptic curves over number fields, cf.~\cite{MR0387283}.  We assume that the image of $\vartheta_\ell$ is contained in one of the maximal subgroups of $\PSL_2(\FF_\ell)$ and then try to obtain a contradiction.  To follow Serre's approach, we will need to understand the image under $\rho_\ell$ of the inertia subgroup at $\ell$ and $2$, see \S\ref{S:ramification at ell} and \S\ref{S:ramification at 2}.

One can similarly construct $\vartheta_7\colon \Gal_\QQ \to \PSL_2(\FF_7)$; however, it appears not to be surjective.   We will thus impose the condition $\ell\geq 11$ throughout much of the paper.

\section{Decomposition over $\QQ(i)$} \label{S:over Q(i)}

Fix a prime $\ell \geq 11$.  We now explain how $V_\ell$ breaks up into irreducible representations under the $\Gal_{\QQ(i)}$-action.  Fix an irreducible $\FF_\ell[\Gal_{\QQ(i)}]$-submodule $W$ of $V_\ell$ and set $n:= \dim_{\FF_\ell} W.$    Let $\beta\colon \Gal_{\QQ(i)} \to \Aut_{\FF_\ell}(W)\cong \GL_n(\FF_\ell)$ be the representation describing the Galois action on $W$.   We denote by $W^\vee$ the dual space of $W$ with its obvious $\Gal_{\QQ(i)}$-action.

\begin{proposition} \label{P:V semisimplification}
Fix notation as above and
 let $V_\ell^\sss$ be the semi-simplification of $V_\ell$ as an $\FF_\ell[\Gal_{\QQ(i)}]$-module.
\begin{romanenum}
\item \label{I:V semisimplification 0}
The integer $n$ is either $1$ or $2$.
\item \label{I:V semisimplification a}
If $n=1$, then $V_\ell^\sss \cong W\oplus W \oplus W^\vee \oplus W^\vee$ and $W\not\cong W^\vee$ as $\FF_\ell[\Gal_{\QQ(i)}]$-modules.
\item \label{I:V semisimplification b}
If $n=2$, then $V_\ell \cong W \oplus W$ and $W\cong W^\vee$ as $\FF_\ell[\Gal_{\QQ(i)}]$-modules.  
\end{romanenum}
\end{proposition}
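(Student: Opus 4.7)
The plan is to exploit an order-$4$ automorphism $\sigma$ of $X_{\QQ(i)}$ defined by $(t,x,y) \mapsto (-t, x, iy)$. This preserves~\eqref{E:main} since the sign from $t \mapsto -t$ in $t(t-1)(t+1)$ cancels the $-1$ from $y^2 \mapsto -y^2$, and the right-hand side depends only on $t^2$. Since $\sigma^2$ restricts to $[-1]$ on each elliptic fibre, it acts as $-I$ on $E[\ell]$ and on $V_\ell$. Thus $\sigma$ is an $\FF_\ell$-linear automorphism of $V_\ell$ with $\sigma^2 = -I$ commuting with $\Gal_{\QQ(i)}$; as a degree-$1$ surface automorphism it lies in $\Or(V_\ell)$, so $\langle \sigma v, w\rangle = -\langle v, \sigma w\rangle$. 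Complex conjugation $c \in \Gal_\QQ$ sends $\sigma$ to $\sigma^{-1} = -\sigma$ (since the defining formula has $iy$ becoming $-iy$), giving $c\sigma c^{-1} = -\sigma$ on $V_\ell$.

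For part (\ref{I:V semisimplification 0}), let $W$ be an irreducible $\FF_\ell[\Gal_{\QQ(i)}]$-submodule of dimension $n$. Then $\sigma(W)$ is another irreducible $\Gal_{\QQ(i)}$-submodule isomorphic to $W$. If $\sigma(W) \neq W$, irreducibility gives $\sigma(W) \cap W = 0$ and hence $2n \leq 4$. If $\sigma(W) = W$, then $\sigma|_W$ has square $-I$: when $\ell \equiv 1 \pmod 4$, Schur's lemma forces $\sigma|_W = \pm i \cdot I$ for $i \in \FF_\ell$ with $i^2 = -1$, placing $W$ in the $\pm i$-eigenspace of $\sigma$, which has dimension $2$ (the two eigenspaces are swapped by $c$ by the anticommutation $c\sigma c^{-1} = -\sigma$, hence have equal dimensions); when $\ell \equiv 3 \pmod 4$, $\sigma|_W$ has minimal polynomial $T^2+1$ making $n$ even, and the remaining case $n = 4$ is ruled out by combining the $\FF_{\ell^2}$-Hermitian structure on $V_\ell$ induced by $\sigma \in \Or(V_\ell)$ with the square-shape constraint on Frobenius characteristic polynomials from Proposition~\ref{P:pattern}(\ref{P:pattern a}).

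For parts (\ref{I:V semisimplification a}) and (\ref{I:V semisimplification b}), assume $\ell \equiv 1 \pmod 4$ (the other case is analogous over $\FF_{\ell^2}$). Write $V_\ell = V^+ \oplus V^-$ for the $\pm i$-eigenspace decomposition of $\sigma$; each $V^\pm$ is a $2$-dimensional $\Gal_{\QQ(i)}$-submodule by the previous step. Since $\sigma \in \Or(V_\ell)$, for $v,w \in V^+$ we have $\langle v,w\rangle = \langle \sigma v, \sigma w\rangle = i^2 \langle v,w\rangle = -\langle v,w\rangle$, so $V^\pm$ are totally isotropic; non-degeneracy then yields a $\Gal_{\QQ(i)}$-equivariant duality $V^- \cong (V^+)^\vee$. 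If $n = 2$, set $W := V^+$; comparing coefficients of $\det(I - \rho_\ell(\Frob_p) T) = (1+bT+T^2)^2$ (Proposition~\ref{P:pattern}(\ref{P:pattern a}) and Lemma~\ref{L:compatibility}) on $V_\ell = V^+ \oplus (V^+)^\vee$ yields the identity $(d_p-1)^2\bigl[(d_p+1)^2 - b^2 d_p\bigr] = 0$ where $d_p = \det V^+(\Frob_p)$; the vanishing of the second factor for a positive density of primes would force $V^+(\Frob_p)$ to have a repeated eigenvalue, incompatible with the irreducibility of $V^+$, so $d_p = 1$ for all split $p$, whence by Chebotarev $\det W$ is trivial, $W \cong W^\vee$, and $V_\ell \cong W \oplus W$. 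If $n = 1$, each $V^\pm$ decomposes into characters, and the duality $V^- \cong (V^+)^\vee$ automatically assembles the four characters into the form $W \oplus W \oplus W^\vee \oplus W^\vee$ (checking that both possible pairings of the characters in $V^+$ produce this shape); the non-isomorphism $W \not\cong W^\vee$ follows because equality would make $W$ a character of order dividing $2$, forcing every trace on $V_\ell$ into $\{-4, 4\}$ and contradicting the trace $4/5 \pmod \ell$ at $p = 5$ read off from Lemma~\ref{L:explicit}, which is not congruent to $\pm 4$ for any $\ell \geq 11$.

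The main obstacle I anticipate is the uniform treatment of the two residue cases modulo $4$: for $\ell \equiv 3 \pmod 4$ one must reinterpret the $\sigma$-eigenspace decomposition as an $\FF_{\ell^2}$-structure on $V_\ell$ and replace the symmetric pairing by the induced Hermitian form, then redo the isotropy and duality computations in that setting. A secondary subtlety is rigorously extending the coefficient comparison in the $n=2$ analysis from Frobenii to all of $\Gal_{\QQ(i)}$ via Chebotarev density, in order to conclude that $\det W$ is trivial as a character rather than merely on a set of Frobenius elements.
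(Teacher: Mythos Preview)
Your proposal has a circular dependency. You invoke Proposition~\ref{P:pattern}(\ref{P:pattern a}) --- the square shape $(1+bT+T^2)^2$ of $P_p(T)$ for $p\equiv 1\pmod 4$ --- both to rule out $n=4$ when $\ell\equiv 3\pmod 4$ and to carry out the determinant analysis in the $n=2$ case. But Proposition~\ref{P:pattern}(\ref{P:pattern a}) is proved in the paper as Proposition~\ref{P:P form mod 1}, and that proof goes through Lemma~\ref{L:square}, which explicitly invokes both parts~(\ref{I:V semisimplification a}) and~(\ref{I:V semisimplification b}) of Proposition~\ref{P:V semisimplification}. So you are assuming what you are trying to prove.

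The structural reason you get stuck is that you use only the single order-$4$ automorphism $\sigma=\alpha_1$. The cyclic group $\langle\sigma\rangle$ can embed in the Schur commutant $Z$ of an irreducible $W$ (indeed it does when $\ell\equiv 3\pmod 4$, giving $\FF_{\ell^2}\subseteq Z$), which is why you are forced to reach for an external input to exclude $n=4$. The paper instead brings in a second automorphism $\alpha_2\colon (x,y,t)\mapsto (t^{-2}x,\,it^{-1}y,\,t^{-1})$, so that $\langle\alpha_1,\alpha_2\rangle\cong Q$, the quaternion group of order $8$; this group still commutes with $\rho_\ell(\Gal_{\QQ(i)})$ and still has $\alpha_i^2$ acting as $-I$. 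If $W$ were $Q$-stable, the resulting map $Q\to Z^\times$ would be injective (every nontrivial normal subgroup of $Q$ contains $-1$), contradicting commutativity of the field $Z$. Hence some $\gamma\in\varphi(Q)$ satisfies $\gamma(W)\neq W$, giving $W\oplus\gamma(W)\subseteq V_\ell$ with $\gamma(W)\cong W$: this yields $n\le 2$ uniformly in $\ell$, and for $n=2$ gives $V_\ell\cong W\oplus W$ directly, after which $W\cong W^\vee$ drops out of the self-duality of $V_\ell$ without any Frobenius computation. Your treatment of $W\not\cong W^\vee$ in the $n=1$ case via the trace at $p=5$ is essentially the paper's argument and is fine.
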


The proposition will be proved by showing that the automorphisms of the surface $X_{\QQ(i)}$ constrains the image of $\rho_\ell$.

\begin{lemma} \label{L:group Q}
Let $Q$ be the group of quaternions $\{\pm 1, \pm i,\pm j,\pm k\}$.  There is a homomorphism $\varphi\colon Q \to \Aut_{\FF_\ell}(V_\ell)$ such that $\varphi(Q)$ commutes with $\rho_\ell(\Gal_{\QQ(i)})$ and $\varphi(-1)=-I$.  
\end{lemma}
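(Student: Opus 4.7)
The plan is to exhibit two explicit $\QQ(i)$-automorphisms $\phi$ and $\psi$ of the elliptic fibration $E|_U\to U$ whose induced actions on $V_\ell$ realise the standard generators $i$ and $j$ of the quaternion group. Explicitly, I would define
\[
\phi\colon (x,y,t)\mapsto (x,\,iy,\,-t),\qquad \psi\colon (x,y,t)\mapsto (x/t^2,\,iy/t,\,1/t).
\]
Substituting each into $t(t-1)(t+1)y^2=x(x+1)(x+t^2)$ and using $i^2=-1$ shows both preserve the equation. Each covers an automorphism of $\PP^1_{\QQ(i)}$ (respectively $t\mapsto -t$ and $t\mapsto 1/t$) that stabilises $\{0,1,-1,\infty\}$, and because $t$ is a unit on $U$ both maps are regular on $E|_U$ and manifestly preserve the zero section at infinity. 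So they are genuine automorphisms of the elliptic fibration $E|_U\to U$ defined over $\QQ(i)$.

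Next I would verify the quaternion relations at the geometric level. Direct composition yields $\phi^2=\psi^2=[-1]$ (the negation of $E/U$) together with
\[
\phi\circ\psi=(x/t^2,\,-y/t,\,-1/t),\qquad \psi\circ\phi=(x/t^2,\,y/t,\,-1/t),
\]
so $\phi\circ\psi=[-1]\circ(\psi\circ\phi)$. Let $\tilde\phi,\tilde\psi\in\Aut_{\FF_\ell}(V_\ell)$ denote the endomorphisms induced by pullback on $V_\ell=H^1_c(U_\Qbar,E[\ell])$. Two standard facts now intervene: $[-1]$ acts as multiplication by $-1$ on the sheaf $E[\ell]$, hence as $-I$ on $V_\ell$; and \'etale pullback is contravariant, so $(fg)^{\ast}=g^{\ast}f^{\ast}$. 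Translating the two surface-level identities above therefore gives
\[
\tilde\phi^2=\tilde\psi^2=-I\quad\text{and}\quad \tilde\phi\,\tilde\psi=-\tilde\psi\,\tilde\phi.
\]
These are precisely the defining relations of $Q$, so the assignments $\varphi(-1)=-I$, $\varphi(i)=\tilde\phi$, $\varphi(j)=\tilde\psi$, $\varphi(k)=\tilde\phi\tilde\psi$ extend uniquely to a well-defined homomorphism $\varphi\colon Q\to\Aut_{\FF_\ell}(V_\ell)$.

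Finally, since $\phi$ and $\psi$ are $\QQ(i)$-morphisms of $(U,E[\ell])$, their induced pullbacks on $V_\ell$ commute with the action of $\Gal_{\QQ(i)}$ by the usual Galois-equivariance of \'etale cohomology; hence $\varphi(Q)$ commutes with $\rho_\ell(\Gal_{\QQ(i)})$, which is what the lemma asserts.

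The only genuinely interesting step is the construction of $\psi$: a base substitution $t\mapsto 1/t$ flips the sign of $t(t-1)(t+1)$, so to preserve the Weierstrass equation one is forced to twist $y$ by $i$ to cancel this sign, which is precisely why the whole construction lives over $\QQ(i)$ and not over $\QQ$. Everything else is direct computation and a standard functoriality argument, with no serious obstacle.
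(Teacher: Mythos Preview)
Your proof is correct and follows essentially the same approach as the paper: your automorphisms $\phi,\psi$ are exactly the paper's $\alpha_1,\alpha_2$, and both arguments verify the quaternion relations on the surface, pass to $V_\ell$ by functoriality of \'etale cohomology (handling contravariance), and identify $\varphi(-1)$ with the action of inversion $[-1]$ on $E[\ell]$. The only cosmetic difference is that the paper checks the conjugation relation $\alpha_1\alpha_2\alpha_1^{-1}=\alpha_2^{-1}$ while you check the equivalent anticommutation $\phi\psi=[-1]\,\psi\phi$.
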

\begin{proof}
By base extending, we may assume that the varieties $U$ and $E$ and the morphism $f\colon E\to U$ are all defined over $\QQ(i)$.  Let $\alpha_1$ and $\alpha_2$ be the automorphisms of $E$ that are given by $(x,y,t)\mapsto (x,iy,-t)$ and $(x,y,t)\mapsto (t^{-2}x, it^{-1}y , t^{-1})$, respectively.   For each $i\in \{1,2\}$, there is a unique isomorphism $g_i \colon U \to U$ such that $f\circ \alpha_i = g_i\circ f$.   Observe that the automorphisms $\alpha_i$ permutes the fibers  of $f\colon E \to U$; moreover, the maps between fibers are isomorphisms of elliptic curves.  We find that $\alpha_i$ induces an automorphism of the group variety $E[\ell]$ (though not as a sheaf over $U$).   We thus have a commutative diagram
\[
\xymatrix{
E[\ell] \ar[r]^{\stackrel{\alpha_i}{\sim}}  \ar[d]^{f} & E[\ell] \ar[d]^{f} \\
U \ar[r]^{\stackrel{g_i}{\sim}} & U.
}
\]
where the horizontal morphisms are isomorphisms.   The morphisms $\alpha_i$ and $g_i$ thus gives rise to a linear automorphism $\tilde\alpha_i$ of $V_\ell=H^1_c(U_\Qbar,E[\ell])$.

 Let $A$ be the subgroup of automorphisms of $E$ generated by $\alpha_1$ and $\alpha_2$.   The action of the $\alpha_i$ on $V_\ell$ thus give rise to a homomorphism $\varphi \colon A \to \Aut_{\FF_\ell}(V_\ell)$ satisfying $\varphi(\alpha_i)=\tilde\alpha_i^{-1}$.   The group $\varphi(A)$ commutes with $\rho_{\ell}(\Gal_{\QQ(i)})$ since the automorphisms $\alpha_i$ are defined over $\QQ(i)$.     One can readily verify the relations $\alpha_1^4 = \alpha_2^4=1$, $\alpha_1^2=\alpha_2^2\neq 1,$ and $\alpha_1 \circ \alpha_2 \circ \alpha_1^{-1} = \alpha_2^{-1}$, which is enough to ensure that $A$ is isomorphic to the group $Q:=\{\pm 1, \pm i,\pm j,\pm k\}$ of quaternions.    
 
Let $\iota$ be the automorphism of $E$ defined by $(x,y,t)\mapsto (x,-y,t)$; it is equal to $\alpha_1^2$ and $\alpha_2^2$.   We need to show that the induced automorphism $\tilde\iota := 	\varphi(\iota)$ of $V_\ell$ is $-I$.  We have $f\circ \iota = f$, so $\iota$ is an automorphism of the sheaf $E[\ell]$ on $U$.  Moreover, $\iota$ acts as $-I$ on $E[\ell]$.   Therefore, $\tilde{\iota}$ acts as $-I$ on $V_\ell$.	
\end{proof}

\begin{proof}[Proof of Proposition~\ref{P:V semisimplification}]

     Let $Z$ be the $\FF_\ell$-subalgebra of $\End_{\FF_\ell}(W)$ consisting of those endomorphisms that commute with the action of $\Gal_{\QQ(i)}$.   Since $W$ is irreducible, Schur's lemma implies that $Z$ is a finite division ring and is hence a (commutative) field.   
          
  Suppose that $W$ is stable under the action of the group $Q$ from Lemma~\ref{L:group Q}.  Let $\tilde{\varphi}\colon Q \to \Aut_{\FF_\ell}(W)$ be the corresponding representation.   Since $\tilde\varphi(-1)=-I$ and every non-trivial normal subgroup of $Q$ contains $-1$, we find that $\tilde\varphi(Q)$ is isomorphic to $Q$.   Therefore, $\tilde\varphi(Q)$ is a non-abelian subgroup of $Z^\times$.   However, this contradicts that $Z$ is a field.    
  
Therefore, $W$ is not stable under the action of $Q$.  So there is an element $\gamma \in \varphi(Q)$ such that $\gamma(W)\neq W$.   The vector space $\gamma(W)$ is stable under the action of $\Gal_{\QQ(i)}$ since $\varphi(Q)$ commutes with $\rho_\ell(\Gal_{\QQ(i)})$.    The automorphism $\gamma$ gives an isomorphism of $\FF_\ell[\Gal_{\QQ(i)}]$-modules from $W$ to $\gamma(W)$.   We have $\gamma(W)\cap W=0$ since $W$ is irreducible and $\gamma(W)\neq W$.   Therefore, $W\oplus\gamma(W)$ is a submodule of $V_\ell$ with $\gamma(W)$ isomorphic to $W$.   Since $V_\ell$ has dimension $4$, this proves that $n$ is $1$ or $2$.

If $n=2$, then $V_\ell = W \oplus \gamma(W)\cong W\oplus W$.  The $\FF_\ell[\Gal_{\QQ(i)}]$-module $W$ is isomorphic to its dual since $V_\ell$ is isomorphic to its dual by Lemma~\ref{L:basics}(\ref{I:pairing}).  
  
We now suppose that $n=1$.  There is a submodule of $V_\ell$ isomorphic to $W\oplus W$.   Since $V_\ell$ is self-dual by Lemma~\ref{L:basics}(\ref{I:pairing}), there is a submodule of $V_\ell^\sss$ isomorphic to $W^\vee \oplus W^\vee$.  To finish the proof of (\ref{I:V semisimplification a}), it suffices to show that $W\not\cong W^\vee$.    Assume to the contrary that $W \cong W^\vee$.   This implies that the corresponding character $\beta\colon \Gal_{\QQ(i)}\to \Aut_{\FF_\ell}(W)=\FF_\ell^\times$ and its inverse are equal.   Therefore, $\beta(\Gal_{\QQ(i)})\subseteq \{\pm 1\}$, and hence $\det(I+g)=0$ or $\det(I-g)=0$ for every $g\in \rho_\ell(\Gal_{\QQ(i)})$.  

The prime $5$ splits in $\QQ(i)$ and $\det(I-\rho_{\ell}(\Frob_5)T)\equiv P_5(T) \pmod{\ell}$, so $P_5(1)$ or $P_5(-1)$ must be divisible by $\ell$.  However, by Lemma~\ref{L:explicit} we have $P_5(1)=(8/5)^2$ and $P_5(1)=(12/5)^2$ which are not divisible by $\ell \geq 11$.  
\end{proof}

\section{Ramification at $\ell$} \label{S:ramification at ell}
Throughout this section, we fix a prime $\ell\geq 11$.

\subsection{Tame inertia}
Let $\Qbar_\ell$ be an algebraic closure  of $\QQ_\ell$.  Let $\QQ_\ell^{\un}$ be the maximal unramified extension of $\QQ_\ell$ in $\Qbar_\ell$.  Let $\QQ_\ell^t$ be the maximal tamely ramified extension of $\QQ_\ell$ in $\Qbar_\ell$.  The subgroup $\calI:=\Gal(\Qbar_\ell/\QQ_\ell^{\un})$ of $\Gal_{\QQ_\ell}$ is the \defi{inertia group}.   The \defi{wild inertia group} $\calP:=\Gal(\Qbar_\ell/\QQ_\ell^t)$ is the largest pro-$\ell$ subgroup of $\calI$.   The \defi{tame inertia group} is the quotient $\calI_t:= \calI/\calP$. 

For each positive integer $d$ relatively prime to $\ell$, let $\mu_d$ be the $d$-th roots of unity in $\Qbar_\ell$.   The map $\calI \to \mu_d$, $\sigma\mapsto \sigma(\sqrt[d]{\ell})/\sqrt[d]{\ell}$ is a surjective homomorphism which factors through $\calI_t$.   Taking the inverse limit over all $d$ relatively prime to $\ell$, we obtain an isomorphism $\calI_t \xrightarrow{\sim} \varprojlim_d \mu_d$.   The group $\mu_d$ lies in the ring of integers $\bbar{\ZZ}_\ell$ of $\Qbar_\ell$. Composing the homomorphism $\calI_t\to \mu_d$ with reduction modulo the maximal ideal of $\bbar{\ZZ}_\ell$ gives a character $\calI_t \to \FFbar_\ell^\times$.  For a positive integer $m$, setting $d:=\ell^m-1$ gives a surjective character $\varepsilon\colon \calI_t \to \FF_{\ell^m}^\times$.  We obtain $m$ characters $\calI_t\to \FF_{\ell^m}^\times$ by composing $\varepsilon$ with the isomorphisms of $\FF_{\ell^m}^\times$ arising from field automorphisms of $\FF_{\ell^m}$; they are called the \defi{fundamental characters} of level $m$.   See \S1 of \cite{MR0387283} for more details.

Let $V$ be an irreducible $\FF_\ell[\calI]$-module and set $m:=\dim_{\FF_\ell} V$.  The group $\calP$ act trivially on $V$, cf.~\cite{MR0387283}*{Proposition~4}.  Let $Z$ be the ring of endomomorphisms of $V$ as an $\FF_\ell[\calI_t]$-module.  Since $V$ is irreducible, $Z$ is a division algebra of finite dimension over $\FF_\ell$.  Therefore, $Z$ is a finite field and $V$ is a vector space of dimension $1$ over $Z$.   Choose an isomorphism $Z\cong \FF_{\ell^m}$ of fields.   The action of $\calI_t$ on $V$ corresponds to a character $\alpha\colon \calI_t \to Z^\times\cong \FF_{\ell^m}^\times$.  Let $\varepsilon_1,\ldots, \varepsilon_m \colon \calI_t \to \FF_{\ell^m}^\times$ be the fundamental characters of level $m$.   There are unique integers $e_i\in \{0,1,\ldots, \ell-1\}$ such that $\alpha= \varepsilon_1^{e_1}\cdots \varepsilon_m^{e_m}$.  These integers $e_1,\ldots,e_m$ are called the \defi{tame inertia weights} of $V$.         

For an $\FF_\ell[\calI]$-module $V$ of finite dimension over $\FF_\ell$, we define its tame inertia weights to be the integers that occur as the tame inertia weight of some composition factor of $V$.   \\

The following is a special case of a conjecture of Serre (cf.~\S1.13 of \cite{MR0387283}) and follows from a more general result of Caruso \cite{MR2372809}.

\begin{theorem}[Caruso] \label{T:Caruso main}
Let $\calX$ be a scheme that is proper and semistable over $\ZZ_\ell$.   For $i< \ell-1$, the tame inertia weights of $H^i_{\et}(\calX_{\Qbar_\ell}, \FF_\ell)^\vee$ belong to the set $\{0,1,\ldots, i\}$.
\end{theorem}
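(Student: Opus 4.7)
The plan is to use integral $p$-adic Hodge theory to translate the question about the Galois action on $\FF_\ell$-\'etale cohomology into a question about a filtered linear-algebra object attached to the log-crystalline cohomology of the special fiber of $\calX$, and then to read off the tame inertia weights directly from that filtration.

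First I would exploit the semistability of $\calX/\ZZ_\ell$ to equip its special fiber with its canonical log structure and form its log-crystalline cohomology. After base change to Breuil's ring $S$, this produces a strongly divisible module $\calM$ carrying a Frobenius $\phi$, a monodromy operator $N$, and a filtration $\operatorname{Fil}^\bullet \calM$. The Hodge-to-de Rham spectral sequence for the smooth generic fiber places the jumps of $\operatorname{Fil}^\bullet$ in degree $i$ inside the range $\{0,1,\ldots,i\}$, and the strongly divisible structure propagates this bound to the integral model. The semistable comparison isomorphism of Kato-Tsuji, refined to the $\ell$-torsion setting by Breuil and Caruso, then identifies $H^i_{\et}(\calX_{\Qbar_\ell}, \FF_\ell)^\vee$ with the image of $\calM/\ell\calM$ under Breuil's contravariant functor $T^*_{\st}$ to $\FF_\ell[\Gal_{\QQ_\ell}]$-modules.

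Next I would invoke Breuil's equivalence of categories between torsion strongly divisible modules with Hodge-Tate weights in $[0,\ell-2]$ and semistable torsion $\Gal_{\QQ_\ell}$-representations with weights in the same range. The hypothesis $i<\ell-1$ is exactly what is needed to remain inside this equivalence. Restricting the resulting representation to the inertia subgroup $\calI$ and passing to the tame quotient $\calI_t$, a direct computation shows that each graded piece $\operatorname{gr}^j \calM$ contributes to the tame inertia action only through the $j$-th power of a fundamental character of the appropriate level, so the tame inertia weights of $H^i_{\et}(\calX_{\Qbar_\ell}, \FF_\ell)^\vee$ lie in $\{0,1,\ldots,i\}$.

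The main obstacle is the presence of the monodromy operator $N$, which vanishes in the smooth case (where the corresponding Fontaine-Laffaille argument is by now classical) but is nontrivial in the semistable case. Controlling its interaction with the filtration, and verifying that the tame inertia still acts through products of fundamental characters with exponents in the Hodge range when $N\neq 0$, is exactly the content of Caruso's refinement of Breuil's theory. A secondary subtlety is that reduction mod $\ell$ can shift or collapse filtration steps, so one must check that the strongly divisible modules in play are flat enough that no new weight outside $\{0,\ldots,i\}$ appears upon reduction; this is where the bound $i<\ell-1$ (rather than something smaller of the form $i<(\ell-1)/e$) is sharpest, and it is the technical heart of Caruso's argument.
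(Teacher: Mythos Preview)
The paper does not give its own proof of this theorem: it is stated as a result of Caruso and simply cited from \cite{MR2372809}, with the remark that it settles a special case of Serre's conjecture on tame inertia weights. So there is no ``paper's proof'' to compare your proposal against.

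That said, your sketch is a reasonable outline of the strategy behind Caruso's paper: pass via integral $p$-adic Hodge theory (Breuil's strongly divisible modules over $S$, log-crystalline cohomology of the special fiber with its natural log structure, the semistable comparison theorem), read the Hodge filtration range $\{0,\ldots,i\}$ off the generic fiber, and then check that Breuil's functor $T^*_{\st}$ translates filtration jumps into exponents of fundamental characters. You have also correctly identified the two genuine technical points: controlling the monodromy operator $N$ in the non-smooth case, and ensuring that reduction modulo $\ell$ does not introduce spurious weights, which is where the hypothesis $i<\ell-1$ enters. What you have written is an accurate high-level summary rather than a proof; the actual argument in \cite{MR2372809} requires substantial work to set up the category of torsion Breuil modules in the semistable case and to verify the comparison at the integral level, none of which can be compressed into a paragraph. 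For the purposes of this paper, citing Caruso is the appropriate move.
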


We will makes use of the following.

\begin{proposition} \label{P:Caruso}
The tame inertia weights of $H^2_{\et}(X_{\Qbar_\ell}, \FF_\ell)^\vee$ belong to the set $\{0,1,2\}$.
\end{proposition}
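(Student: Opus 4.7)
The plan is to apply Caruso's Theorem \ref{T:Caruso main} with $i = 2$. Since $\ell \geq 11$, the inequality $i = 2 < \ell - 1$ holds automatically, so the only task is to exhibit a proper semistable model of $X$ over $\ZZ_\ell$. I will argue the stronger statement that $X$ has good reduction at $\ell$: this produces a smooth proper model, and a smooth proper model is in particular semistable.

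To construct the model, I would start from the integral Weierstrass equation (\ref{E:main}) and spread the elliptic fibration $\tilde f\colon X \to \PP^1_\QQ$ out to a relatively minimal regular proper elliptic fibration $\tilde{\calF}\colon \calX \to \PP^1_{\ZZ_\ell}$ by applying Tate's algorithm at each of the four singular points $t = 0, 1, -1, \infty$ of $\tilde f$. These four points remain pairwise distinct modulo any $\ell > 2$, so no singular fibers collide upon reduction. For $\ell \geq 5$ Tate's algorithm needs no normalization that would introduce denominators divisible by $2$ or $3$, and the intermediate quantities $a_i$, $b_i$, $c_i$, $\Delta$ it produces from (\ref{E:main}) are polynomials in $t$ with integer coefficients. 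Consequently the Kodaira type of each singular fiber, and the chain of blow-ups yielding the minimal resolution, are the same in characteristic $\ell$ as in characteristic $0$. This produces a model $\calX$ that is smooth and proper over $\ZZ_\ell$.

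The main obstacle is the local Tate-algorithm computation at each of the four singular points and the check that the resolution commutes with reduction modulo $\ell$. This is a routine case-by-case analysis, but one has to verify it explicitly, exploiting the fact that the only primes where Tate's algorithm could misbehave on the equation (\ref{E:main}) are $2$ and $3$, both excluded by $\ell \geq 11$.

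Once the smooth proper model $\calX / \ZZ_\ell$ is in hand, Theorem \ref{T:Caruso main} applied with $i = 2$ gives directly that the tame inertia weights of $H^2_{\et}(X_{\Qbar_\ell}, \FF_\ell)^\vee = H^2_{\et}(\calX_{\Qbar_\ell}, \FF_\ell)^\vee$ lie in $\{0, 1, 2\}$, completing the proof.
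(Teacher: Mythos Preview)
Your proposal is correct and follows essentially the same approach as the paper: construct a smooth proper (hence semistable) model of $X$ over $\ZZ_\ell$ by running Tate's algorithm at the four bad fibers, then invoke Theorem~\ref{T:Caruso main} with $i=2<\ell-1$. The only cosmetic difference is that the paper first rewrites (\ref{E:main}) as the integral Weierstrass equation $y^2 = x^3 + (t^5-t)x^2 + (t^8-2t^6+t^4)x$, with discriminant $16\,t^{10}(t-1)^8(t+1)^8$, before checking that the singular locus of the resulting subscheme of $\PP^2_{\PP^1_{\ZZ_\ell}}$ is as expected and resolving; you describe this step more schematically but to the same effect.
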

\begin{proof}
After a change of variable in (\ref{E:main}), we may start with the Weierstrass equation 
\[
y^2= x^3 + (t^5 - t)x^2 + (t^8 - 2t^6 + t^4)x;
\]
it has discriminant $16 t^{10}(t-1)^8(t+1)^8$.  Define $C:=\PP^1_{\ZZ_\ell}$ with a parameter $t$.  The Weierstrass equation defines a closed subscheme $\calY$ of $\PP^2_C$.  We have a morphism $\calY \to C$ obtained by composing the inclusion $\calY\subseteq \PP^2_C$ with the structure map $\PP^2_C\to C$.    One can check that the singular subscheme of $\calY$ is reduced and consists of the closure of the points $(0,0,0)$, $(0,0,1)$, $(0,0,-1)$ and a point with $t=\infty$.  By resolving the singularities appropriately (more explicitly, by following Tate's algorithm), we can construct a model $\calX/\ZZ_\ell$ of $X_{\QQ_\ell}$ that has good reduction.  The proposition then follows immediately from Theorem~\ref{T:Caruso main} (we have $2<\ell-1$ by our ongoing assumption $\ell \geq 11$).
\end{proof}

\subsection{Image of inertia} \label{SS:ramification at ell}
Let $\calI$ be an inertia subgroup of $\Gal_\QQ$ at $\ell$ and let $\calP$ be the wild inertia subgroup of $\calI$.  Since $\ell$ is unramified in $\QQ(i)$, the group $\calI$ is contained in $\Gal_{\QQ(i)}$.  Let $\chi_\ell \colon \Gal_\QQ \to \FF_\ell^\times$ be the representation describing the Galois action on the group of $\ell$-th roots of unity $\mu_\ell$, i.e., $\sigma(\zeta)=\zeta^{\chi_\ell(\sigma)}$ for $\sigma\in \Gal_\QQ$ and $\zeta\in \mu_\ell$.

Let $W$ be an irreducible $\FF_\ell[\Gal_{\QQ(i)}]$-submodule of $V_\ell$.  Set $n= \dim_{\FF_\ell} W$; it is $1$ or $2$ by Proposition~\ref{P:V semisimplification}(\ref{I:V semisimplification 0}).    Let $\beta\colon \Gal_{\QQ(i)} \to \Aut_{\FF_\ell}(W)\cong \GL_n(\FF_\ell)$ be the representation describing the Galois action on $W$.   The main task of this section is to prove the following proposition.

\begin{proposition} \label{P:ramification at ell} 
\begin{romanenum}
\item \label{P:ramification at ell a}
If $n=1$, then $\beta|_\calI = \chi_\ell^e|_\calI$ for some $e\in\{-1,0,1\}$.
\item \label{P:ramification at ell b}
If $n=2$, then $\beta(\calI)\subseteq \SL_2(\FF_\ell)$.
\item \label{P:ramification at ell c}
The group $\rho_\ell(\calI)/\rho_\ell(\calP)$ is cyclic of order $1$, $\ell-1$ or $\ell+1$.
\end{romanenum}
\end{proposition}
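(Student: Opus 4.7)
The plan is to combine Proposition~\ref{P:Caruso} with Lemma~\ref{L:X connection} to bound the tame inertia weights of $V_\ell|_\calI$, and then to read off each part from the decomposition in Proposition~\ref{P:V semisimplification}. Note that $\ell$ is unramified in $\QQ(i)$, so $\calI\subseteq\Gal_{\QQ(i)}$. Since $V_\ell^\vee(1)$ appears as a direct summand of the semi-simplification of $H^2_{\et}(X_\Qbar,\FF_\ell)^\vee$, Proposition~\ref{P:Caruso} places its tame inertia weights in $\{0,1,2\}$. Untwisting by $\chi_\ell^{-1}$ shifts each weight---or each entry of a level-$2$ weight pair---by $-1$, and the self-duality $V_\ell\cong V_\ell^\vee$ from Lemma~\ref{L:basics}(\ref{I:pairing}) transports this bound from $V_\ell^\vee$ to $V_\ell$: the tame inertia weights of $V_\ell|_\calI$ can be represented by integers in $\{-1,0,1\}$.

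For part~(\ref{P:ramification at ell a}), when $n=1$ the character $\beta|_\calI$ takes values in $\FF_\ell^\times$ and therefore factors through $\calI_t$ (since $\calP$ is pro-$\ell$), so it equals $\chi_\ell^e|_\calI$ for a unique $e$ modulo $\ell-1$. As $W$ is a composition factor of $V_\ell|_\calI$, the weight $e$ lies in $\{-1,0,1\}$.

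For part~(\ref{P:ramification at ell b}), Proposition~\ref{P:V semisimplification}(\ref{I:V semisimplification b}) gives $V_\ell\cong W\oplus W$; combined with Lemma~\ref{L:basics}(\ref{I:pairing}) this forces $W\cong W^\vee$ and hence $\det(\beta)^2=1$. I would split on whether $W|_\calI$ is reducible. If reducible, with semi-simplification $\chi_\ell^a\oplus\chi_\ell^b$, the self-duality gives $\{a,b\}\equiv\{-a,-b\}\pmod{\ell-1}$; since $a,b\in\{-1,0,1\}$ and $\ell\geq 11$ rules out $(\ell-1)/2$ as a valid weight, this forces $a+b\equiv 0$. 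If irreducible, then $W|_\calI$ corresponds to a level-$2$ fundamental character $\alpha=\varepsilon_1^{e_1}\varepsilon_2^{e_2}$ with $(e_1,e_2)\in\{-1,0,1\}^2$ and $e_1\neq e_2$ (for irreducibility); the constraint that $\det(\beta)|_\calI$, of weight $e_1+e_2$, satisfies $\det^2=1$ forces, for $\ell\geq 11$, the equality $e_1+e_2=0$, leaving only $(e_1,e_2)=(-1,1)$ up to Galois conjugation. In either subcase $\det(\beta)|_\calI=1$, giving $\beta(\calI)\subseteq\SL_2(\FF_\ell)$.

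For part~(\ref{P:ramification at ell c}), the kernel of the map from $\rho_\ell(\calI)$ to its action on the semi-simplification $V_\ell^\sss$ consists of elements preserving a composition series and acting trivially on every factor, and is therefore a unipotent (hence $\ell$-) subgroup of $\rho_\ell(\calI)$. Since $\rho_\ell(\calP)$ is pro-$\ell$ with prime-to-$\ell$ cokernel, it is the unique $\ell$-Sylow subgroup of $\rho_\ell(\calI)$ and must contain this kernel, so $\rho_\ell(\calI)/\rho_\ell(\calP)$ agrees with the image of $\calI_t$ in $\Aut(V_\ell^\sss)$. Reading off the cases: the image is trivial when the governing weight is zero; cyclic of order $\ell-1$ in the level-$1$ subcases with $|e|=1$ or $|a|=1$, since $\chi_\ell(\calI_t)=\FF_\ell^\times$; and cyclic of order $\ell+1$ in the level-$2$ subcase of~(\ref{P:ramification at ell b}), since $\alpha=\varepsilon^{\ell-1}$ has order $(\ell^2-1)/(\ell-1)=\ell+1$. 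The main technical obstacle lies in the first step: carefully tracking weights of $1$- and $2$-dimensional composition factors through the Tate twist and dualization; once the weight bound is in hand, the case analysis is driven almost entirely by the self-duality $W\cong W^\vee$, which in the level-$2$ subcase of~(\ref{P:ramification at ell b}) pins $(e_1,e_2)$ down to the single candidate $(-1,1)$.
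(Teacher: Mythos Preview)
Your proof is correct and follows essentially the same strategy as the paper: bound the tame inertia weights of $V_\ell|_\calI$ via Proposition~\ref{P:Caruso} and Lemma~\ref{L:X connection}, then use the self-duality $W\cong W^\vee$ (the content of Lemma~\ref{L:beta image pm1}) to constrain the determinant in each subcase. Your execution is in places slightly more direct than the paper's---in the level-$2$ subcase of (\ref{P:ramification at ell b}) you compute $\det(\beta)|_\calI=\chi_\ell^{e_1+e_2}$ and force $e_1+e_2=0$ from $\det^2=1$, whereas the paper (in its Lemma~\ref{L:V semisimplification for inertia}) rules out the bad weight pairs by showing $\alpha(\calI)$ would be a full Cartan with surjective determinant; and for (\ref{P:ramification at ell c}) you give the uniform identification of $\rho_\ell(\calI)/\rho_\ell(\calP)$ with the tame image on the $\calI$-semisimplification rather than reading it off case by case---but these are variations within the same argument.
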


\begin{lemma} \label{L:beta image pm1} 
If $n=2$, then $\det(\beta(\Gal_\QQ)) \subseteq \{\pm 1\}$.
\end{lemma}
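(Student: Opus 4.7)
The plan is a short determinant computation that combines two results already in hand. By Proposition~\ref{P:V semisimplification}(\ref{I:V semisimplification b}), in the case $n=2$ we have an isomorphism $V_\ell \cong W \oplus W$ of $\FF_\ell[\Gal_{\QQ(i)}]$-modules. By Lemma~\ref{L:SO image}, the representation $\rho_\ell$ takes values in $\SO(V_\ell)$, so $\det \rho_\ell$ is the trivial character of $\Gal_\QQ$.

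Taking top exterior powers of both sides of the isomorphism $V_\ell \cong W \oplus W$ yields an equality of characters
\[
\det \rho_\ell \big|_{\Gal_{\QQ(i)}} \;=\; (\det \beta)^{2} \colon \Gal_{\QQ(i)} \longrightarrow \FF_\ell^\times.
\]
Since $\det \rho_\ell$ is the trivial character on all of $\Gal_\QQ$, it is in particular trivial on the subgroup $\Gal_{\QQ(i)}$. Hence $(\det \beta)^{2} \equiv 1$, which forces $\det \beta$ to take values in $\{\pm 1\}$, as required.

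I do not foresee any real obstacle: the entire argument is a formal consequence of multiplicativity of determinants across direct sums, applied to the two structural inputs cited above. The one minor point worth flagging is that $\beta$ was introduced with domain $\Gal_{\QQ(i)}$, so I read the statement $\det\beta(\Gal_\QQ)\subseteq\{\pm 1\}$ as asserting this property on that natural domain; if needed, it extends to $\Gal_\QQ$ by observing that $V_\ell|_{\Gal_{\QQ(i)}}$ is $W$-isotypic, so the $\Gal_\QQ$-action on $V_\ell$ preserves the isomorphism class of $W$ and any extension of the character $\det\beta$ to $\Gal_\QQ$ must square to the trivial character as well.
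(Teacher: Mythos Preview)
Your proof is correct and essentially parallels the paper's one-line argument: the paper invokes the self-duality $W\cong W^\vee$ from Proposition~\ref{P:V semisimplification}(\ref{I:V semisimplification b}) to conclude $\det\beta=(\det\beta)^{-1}$, whereas you use the companion fact $V_\ell\cong W\oplus W$ from the same proposition together with Lemma~\ref{L:SO image} to obtain $(\det\beta)^2=\det\rho_\ell|_{\Gal_{\QQ(i)}}=1$. Your reading of $\Gal_\QQ$ in the statement as $\Gal_{\QQ(i)}$ (the actual domain of $\beta$) is the intended one.
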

\begin{proof}
By Proposition~\ref{P:V semisimplification}(\ref{I:V semisimplification b}), $W$ is self-dual so $\det\circ \beta = (\det\circ \beta)^{-1}$.    The lemma is now immediate.
\end{proof}

\begin{lemma} \label{L:V semisimplification for inertia} 
Let $V_\ell^\sss$ be the semi-simplification of $V_\ell$ as an $\FF_\ell[\calI]$-module.   Let $\calW$ be an irreducible $\FF_\ell[\calI]$-submodule of $V_\ell^\sss$.  Set $m:=\dim_{\FF_\ell} \calW$ and let $\alpha\colon \calI \to \Aut_{\FF_\ell}(\calW) \cong \GL_m(\FF_\ell)$ be the representation corresponding to $\calW$.
\begin{romanenum}
\item \label{I:V semisimplification for inertia a} 
We have $m\leq n$.  In particular, $m$ is $1$ or $2$.
\item \label{I:V semisimplification for inertia d} 
If $m=1$, then $\alpha= \chi_\ell^e|_\calI$ for some $e\in\{-1,0,1\}$.
\item \label{I:V semisimplification for inertia e} 
If $m=2$, then $\alpha(\calI)$ is a cyclic group of order $\ell+1$ contained in $\SL_2(\FF_\ell)$.
\end{romanenum}
\end{lemma}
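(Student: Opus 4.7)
The plan rests on three ingredients: the dimension bound on $\Gal_{\QQ(i)}$-composition factors of $V_\ell$ from Proposition~\ref{P:V semisimplification}, the tame weight bound coming from Poincar\'e duality together with Proposition~\ref{P:Caruso}, and the self-duality of $V_\ell$ afforded by Lemma~\ref{L:basics}(\ref{I:pairing}). For part (\ref{I:V semisimplification for inertia a}), I will observe that every $\calI$-composition factor of $V_\ell^\sss$ appears as an $\calI$-composition factor of some $\Gal_{\QQ(i)}$-composition factor of $V_\ell$. Proposition~\ref{P:V semisimplification} ensures these latter factors are all isomorphic to $W$ or $W^\vee$, of dimension $n\in\{1,2\}$, giving $m \le n \le 2$.

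For the weight bound underlying both (\ref{I:V semisimplification for inertia d}) and (\ref{I:V semisimplification for inertia e}), I will combine Poincar\'e duality on the smooth projective surface $X$, which yields $H^2_{\et}(X_\Qbar, \FF_\ell)^\vee \cong H^2_{\et}(X_\Qbar, \FF_\ell)(2)$, with Lemma~\ref{L:X connection} to realise $V_\ell(1)$ as a composition factor of $H^2_{\et}(X_\Qbar, \FF_\ell)^\vee$. Proposition~\ref{P:Caruso} then forces the tame inertia weights of $V_\ell(1)$ into $\{0,1,2\}$, and untwisting by $\chi_\ell$ shifts the tame weights of $V_\ell$ itself into $\{-1,0,1\}$. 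Part (\ref{I:V semisimplification for inertia d}) is now immediate: a one-dimensional $\calI$-factor $\calW$ has tame character $\chi_\ell^e|_\calI$ with $e \in \{-1,0,1\}$.

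For part (\ref{I:V semisimplification for inertia e}), $m = 2$ forces $n = 2$ by part (\ref{I:V semisimplification for inertia a}) and forces $W|_\calI = \calW$ to be irreducible. Proposition~\ref{P:V semisimplification}(\ref{I:V semisimplification b}) gives $V_\ell \cong W \oplus W$ as a $\Gal_{\QQ(i)}$-module, so $\calW$ appears in $V_\ell|_\calI$ with multiplicity $2$ as the only two-dimensional composition factor; the self-duality of $V_\ell$ then forces $\calW \cong \calW^\vee$. Writing the tame character of $\calW$ as $\alpha = \varepsilon_1^c$ for a fundamental character $\varepsilon_1$ of level $2$ and $c \in \ZZ/(\ell^2-1)\ZZ$, irreducibility is $c \not\equiv 0 \pmod{\ell+1}$, and self-duality $\alpha^{-1} \in \{\alpha, \alpha^\ell\}$ reduces to $c \equiv 0 \pmod{\ell-1}$ (since $\ell$ odd rules out the alternative $\alpha^2=1$, as that would force $c \equiv 0 \pmod{\ell+1}$). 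The weight bound on $V_\ell(1)$, pulled back through the twist $\chi_\ell|_{\calI_t} = \varepsilon_1 \varepsilon_2 = \varepsilon_1^{\ell+1}$, restricts $c$ to $\{-\ell, 1-\ell, -1, 1, \ell-1, \ell\}$; the unique element of this list satisfying the two congruences, up to the Frobenius identification $c \sim c\ell$, is $c = \ell-1$. This gives $|\alpha(\calI)| = (\ell^2-1)/(\ell-1) = \ell+1$ with cyclic image, and $\det\alpha = \alpha\cdot\alpha^\ell = \varepsilon_1^{(\ell-1)(\ell+1)} = 1$, so $\alpha(\calI) \subseteq \SL_2(\FF_\ell)$.

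The main obstacle will be the bookkeeping in part (\ref{I:V semisimplification for inertia e}): handling tame characters valued in $\FF_{\ell^2}^\times$, keeping the Tate twist consistent under the relation $\varepsilon_2 = \varepsilon_1^\ell$, and verifying that $c = \ell-1$ really is the unique class surviving both the weight restriction and the self-duality congruence. The underlying idea is short---self-duality cuts the possibilities dramatically---but the explicit enumeration needs care.
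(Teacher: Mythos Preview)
Your proof is correct and follows the same architecture as the paper: part~(\ref{I:V semisimplification for inertia a}) via Proposition~\ref{P:V semisimplification} together with $\calI \subseteq \Gal_{\QQ(i)}$, the weight bound via Lemma~\ref{L:X connection} and Proposition~\ref{P:Caruso}, and parts~(\ref{I:V semisimplification for inertia d}) and~(\ref{I:V semisimplification for inertia e}) by reading off the allowed exponents. The one noteworthy difference is in part~(\ref{I:V semisimplification for inertia e}): to exclude the exponents $c \in \{\pm 1, \pm\ell\}$ (which would make $\alpha(\calI)$ cyclic of order $\ell^2-1$), the paper invokes the global constraint Lemma~\ref{L:beta image pm1}---if $\beta(\calI)$ contained such a group then $\det(\beta(\calI))=\FF_\ell^\times$, contradicting $\det\beta(\Gal_{\QQ(i)}) \subseteq \{\pm 1\}$---whereas you extract the same exclusion purely locally from the self-duality of $\calW$, via the congruence $c \equiv 0 \pmod{\ell-1}$. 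Your route is slightly cleaner here, keeping the argument entirely at the level of tame characters and avoiding the detour through $\beta$; the paper's route has the mild advantage of recording the global determinant constraint explicitly, which it reuses later. A minor remark: your weight step reaches $\calW(1)$ directly via Poincar\'e duality on $X$, while the paper first bounds the weights of $\calW(-1)^\vee$ and then passes to $\calW(1)$ using $\calW \cong \calW^\vee$---equivalent moves, just applied in a different order.
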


\begin{proof}
We have $\calI \subseteq \Gal_{\QQ(i)}$ since $\ell$ is unramified in $\QQ(i)$.  Part (\ref{I:V semisimplification for inertia a}) now follows immediately from Proposition~\ref{P:V semisimplification}.

Let $\calH$ be the semi-simplification of $H^2_{\et}(X_\Qbar, \FF_\ell(1))$ as an $\FF_\ell[\calI]$-module.  By Lemma~\ref{L:X connection},  we find that $\calW$ is isomorphic to a submodule of $\calH$.  Therefore, $\calW(-1)^\vee$ is isomorphic to a submodule of $\calH(-1)^\vee$.   So $\calW(-1)^\vee$ is isomorphic to a submodule of the semi-simplification of $H^2_{\et}(X_\Qbar, \FF_\ell)^\vee$ as an $\FF_\ell[\calI]$-module.  Proposition~\ref{P:Caruso} implies that the possible tame inertia weights of $\calW(-1)^\vee$ are $0$, $1$ and $2$.

First suppose that $m=1$.   We then have $(\alpha \cdot \chi_\ell|_\calI^{-1})^{-1} = \varepsilon^e$ for some $e\in\{0,1,2\}$ where $\varepsilon\colon \calI\to \FF_\ell^\times$ is the fundamental character of level $1$.  By Proposition~8 of \cite{MR0387283}, we have $\varepsilon=\chi_\ell|_{\calI}$.  Therefore, $\alpha= \varepsilon^{-e} \chi_\ell|_\calI = \chi_\ell|_\calI^f$ where $f:=1-e \in \{-1,0,1\}$.  This proves (\ref{I:V semisimplification for inertia d}).

Now suppose that $m=2$.  The irreducible representation $\alpha$ corresponds to a character $\calI \to \FF_{\ell^2}^\times$ that we also denote by $\alpha$.   Let $\varepsilon$ be a fundamental character of level $2$.  Using Proposition~\ref{P:V semisimplification}, we find that $\calW$ is self-dual.   Therefore, $\calW(1)$ is isomorphic to $\calW(-1)^\vee$ as an $\FF_\ell[\calI]$-module, and hence has possible tame inertia weights $0$, $1$ and $2$.   So $\alpha=\chi_\ell|_{\calI}^{-1}\cdot \varepsilon^{e_1+\ell e_2}$ with $e_1,e_2\in\{0,1,2\}$.  The character $\varepsilon^{1+\ell}$ is fundamental of level $1$ and hence agrees with $\chi_\ell|_{\calI}$, so $\alpha = \varepsilon^{f_1 + f_2 \ell}$ with $f_i := 1-e_i \in \{-1,0,1\}$.   

If $f_1+f_2\ell \in \{0, \pm(\ell+1)\}$, then the character $\alpha$ is $1$, $\chi_\ell |_\calI$ or $\chi_\ell^{-1} |_{\calI}$; this is impossible since $\alpha\colon \calI\to \GL_2(\FF_\ell)$ is irreducible. 

If $f_1+f_2 \ell \in \{ \pm 1, \pm \ell\}$, then $\alpha(\calI)$ is cyclic of order $\ell^2-1$ since $\varepsilon(\calI) = \FF_{\ell^2}^\times$.  We have $n=2$ since $m=2$.    By Proposition~\ref{P:V semisimplification}, we deduce that $\alpha$ and $\beta|_\calI$ are isomorphic representations (we again have $\calI\subseteq \Gal_{\QQ(i)}$ since $\ell$ is unramified in $\QQ(i)$).   Therefore, $\beta(\Gal_{\QQ(i)})$ contains a cyclic group $C$ of order $\ell^2-1$.  By \cite{MR0387283}*{\S2.6}, $C$ is a split Cartan subgroup of $\GL_2(\FF_\ell)$ and satisfies $\det(C)=\FF_\ell^\times$.    So $\det(\beta(\Gal_{\QQ(i)}))=\FF_\ell^\times$ which contradicts Lemma~\ref{L:beta image pm1} since $\ell\geq 11$.

Therefore, $f_1+f_2\ell \in \{ \pm (\ell-1) \}$.  This implies that $\alpha(\calI)$ is cyclic of order $\ell+1$ and is contained in $\SL_2(\FF_\ell)$.  
\end{proof}

\begin{proof}[Proof of Proposition~\ref{P:ramification at ell}]
We have $\calI \subseteq \Gal_{\QQ(i)}$ since $\ell$ is unramified in $\QQ(i)$.   Let $\calW$ be an irreducible $\FF_\ell[\calI]$-submodule of $W$ and take $\alpha$ and $m$ as in Lemma~\ref{L:V semisimplification for inertia}.

Suppose that $n=1$ and hence $\calW=W$.    Part (\ref{P:ramification at ell a}) now follows directly from Lemma~\ref{L:V semisimplification for inertia}(\ref{I:V semisimplification for inertia d}).   By part (\ref{P:ramification at ell a}) and Proposition~\ref{P:V semisimplification}(\ref{I:V semisimplification a}), we deduce that the group $\rho_\ell(\calI)/\rho_\ell(\calP)$ is cyclic of order $1$ or $\ell-1$.      

We are left to consider the case $n=2$.  First suppose that $\calW=W$, and hence we may assume that $\beta|_\calI = \alpha$.    By Lemma~\ref{L:V semisimplification for inertia}(\ref{I:V semisimplification for inertia e}), $\beta(\calI)=\alpha(\calI)$ is a cyclic group of order $\ell+1$ in $\SL_2(\FF_\ell)$.    By Proposition~\ref{P:V semisimplification}(\ref{I:V semisimplification b}), $\rho_\ell(\calI)$ is isomorphic to $\beta(\calI)$ and hence is cyclic of order $\ell+1$.

The final case is where $n=2$ and $\dim_{\FF_\ell} \calW = 1$.  Let $W^\sss$ be the semi-simplification of $W$ as an $\FF_\ell[\calI]$-module.   There is an $\FF_\ell[\calI]$-module $\calW'$ such that $W^\sss \cong \calW \oplus \calW'$.   Let $\gamma\colon \calI \to \FF_\ell^\times$ and $\gamma'\colon \calI \to \FF_\ell^\times$ be the characters describing the action of $\calI$ on $\calW$ and $\calW'$, respectively.   By Lemma~\ref{L:V semisimplification for inertia}(\ref{I:V semisimplification for inertia d}), there are $e$ and $f$ in $\{-1,0,1\}$ such that $\alpha=\chi_\ell^e |_\calI$ and $\alpha'=\chi_\ell^{-f} |_\calI$.  So for $\sigma \in \calI$, we have 
\[
\det(I-\beta(\sigma) T) = (1-\alpha(\sigma) T)(1-\alpha'(\sigma) T)=  1- (\chi_\ell(\sigma)^e + \chi_\ell(\sigma)^{-f})T + \chi_\ell(\sigma)^{e-f} T^2.
\]
If $e\neq f$, then we find that $\det(\beta(\calI))$ is a cyclic group of cardinality at least $(\ell-1)/2 > 2$.  However this contradicts Lemma~\ref{L:beta image pm1}, so $e=f$.  Therefore, $\det(I-\beta(\sigma) T) = 1- (\chi_\ell(\sigma)^e + \chi_\ell(\sigma)^{-e})T + T^2$ for all $\sigma \in \calI$.  This proves that $\det(\beta(\calI))=\{1\}$, i.e., $\beta(\calI)\subseteq \SL_2(\FF_\ell)$.    We have $\beta(\calI)/\beta(\calP) \cong \alpha(\calI)=\chi^e(\calI)$, so $\beta(\calI)/\beta(\calP)$ is a cyclic group of order $1$ or $\ell-1$ if $e=0$ or $e=\pm 1$, respectively.  By Proposition~\ref{P:V semisimplification}(\ref{I:V semisimplification a}), we deduce that $\rho_\ell(\calI)/\rho_\ell(\calP)$ is also cyclic of order $1$ or $\ell-1$.
\end{proof}  

When $n=2$ we have the following important constraint on the image of $\beta$.

\begin{lemma} \label{L:beta image eq1}
If $n=2$, then $\beta(\Gal_{\QQ(i)}) \subseteq \SL_2(\FF_\ell)$.
\end{lemma}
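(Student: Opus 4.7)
The plan is to show that the character $\det\circ\beta\colon\Gal_{\QQ(i)}\to\FF_\ell^\times$ is trivial. By Lemma~\ref{L:beta image pm1} we already know it takes values in $\{\pm1\}$, and since such a character factors through a finite quotient of $\Gal_{\QQ(i)}$, by the Chebotarev density theorem it will suffice to verify $\det(\beta(\Frob_\mathfrak{p}))=1$ for every prime $\mathfrak{p}$ of $\QQ(i)$ outside a fixed finite exceptional set (say those dividing $2\ell$).

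The engine of the computation is the decomposition $V_\ell\cong W\oplus W$ of $\FF_\ell[\Gal_{\QQ(i)}]$-modules from Proposition~\ref{P:V semisimplification}(\ref{I:V semisimplification b}), which gives, for each $\sigma\in\Gal_{\QQ(i)}$,
\[
\det(I-\rho_\ell(\sigma)T)\;=\;\det(I-\beta(\sigma)T)^2.
\]
Combined with Lemma~\ref{L:compatibility} and the remark following it, this identifies $\det(I-\beta(\Frob_\mathfrak{p})T)\bmod\ell$ as the unique degree-$2$ square root, with constant term $1$, of the relevant characteristic polynomial $P_p^{(n)}(T)$.

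I would then split on the splitting of $p:=\mathfrak{p}\cap\ZZ$ in $\QQ(i)$. If $p\equiv 1\pmod 4$, then $p$ splits and $\rho_\ell(\Frob_\mathfrak{p})$ is conjugate to $\rho_\ell(\Frob_p)$; Proposition~\ref{P:pattern}(i) gives $P_p(T)=(1+bT+T^2)^2$, so matching constant terms yields $\det(I-\beta(\Frob_\mathfrak{p})T)\equiv 1+bT+T^2\pmod\ell$ and in particular $\det\beta(\Frob_\mathfrak{p})=1$. If $p\equiv 3\pmod 4$, then $p$ is inert in $\QQ(i)$, so $\rho_\ell(\Frob_\mathfrak{p})$ corresponds to $\rho_\ell(\Frob_p)^2$ and the relevant polynomial is $P_p^{(2)}(T)\bmod\ell$. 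From Proposition~\ref{P:pattern}(ii) the polynomial $P_p(T)=1+(b^2-2)T^2+T^4$ is even, so
\[
P_p^{(2)}(T^2)\;=\;P_p(T)\,P_p(-T)\;=\;P_p(T)^2,
\]
which yields $P_p^{(2)}(T)=(1+(b^2-2)T+T^2)^2$, and the same square-root argument forces $\det\beta(\Frob_\mathfrak{p})=1$.

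A final application of Chebotarev then gives $\det\beta\equiv 1$, and hence $\beta(\Gal_{\QQ(i)})\subseteq\SL_2(\FF_\ell)$. I expect no real obstacles here; the only subtle point is the bookkeeping for inert primes --- identifying $\Frob_\mathfrak{p}$ with $\Frob_p^2$ in the representation and correctly extracting $P_p^{(2)}$ from the even form of $P_p$ --- but all the necessary structural input has already been assembled in \S\ref{SS:Lfunctions} and \S\ref{S:over Q(i)}.
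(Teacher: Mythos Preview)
Your argument is circular. You invoke Proposition~\ref{P:pattern} to obtain the precise shape of $P_p(T)$, but in the paper Proposition~\ref{P:pattern} is a forward reference: it is proved only later as Propositions~\ref{P:P form mod 1} and~\ref{P:P form mod 3}, and both of those proofs rest on Lemma~\ref{L:square} (directly, or via Lemmas~\ref{L:need to switch} and~\ref{L:other coset}). The $n=2$ case of Lemma~\ref{L:square} in turn uses exactly the statement you are trying to establish, Lemma~\ref{L:beta image eq1}. Concretely, from $V_\ell\cong W\oplus W$ one only obtains
\[
\det(I-\rho_\ell(\sigma)T)=\bigl(1-\tr(\beta(\sigma))\,T+\det(\beta(\sigma))\,T^2\bigr)^2,
\]
and asserting that the inner factor has $+T^2$ rather than $-T^2$ is precisely the claim $\det\beta(\sigma)=1$. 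So Proposition~\ref{P:pattern} is not available to you here.

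The paper avoids this by not appealing to the general form of $P_p(T)$ at all. It uses only the explicitly computed polynomials $P_5(T)=(1-\tfrac{2}{5}T+T^2)^2$ and $P_3^{(2)}(T)=(1-\tfrac{2}{9}T+T^2)^2$ from Lemma~\ref{L:explicit} and (\ref{E:P4 explicit}), together with unique factorization in $\FF_\ell[T]$, to read off $\varepsilon(\Frob_5)=\varepsilon(\Frob_{3}^{2})=1$. Two Frobenius values alone do not suffice for a Chebotarev argument, so the paper also controls the ramification of $\varepsilon$: unramified away from $2$ and $\ell$ by Lemma~\ref{L:compatibility}, and unramified at $\ell$ by Proposition~\ref{P:ramification at ell}(\ref{P:ramification at ell b}). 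This forces the fixed field of $\ker\varepsilon$ to be $\QQ(i)(\sqrt{a})$ with $a\in\{1,i,1+i,i(1+i)\}$, and the two computed Frobenius values eliminate all possibilities except $a=1$.
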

\begin{proof}
By Lemma~\ref{L:beta image pm1}, we can define a character $\varepsilon\colon \Gal_{\QQ(i)} \to \{\pm 1\}$, $\sigma\mapsto \det(\beta(\sigma))$.  We need to show that $\varepsilon=1$.  By Proposition~\ref{P:V semisimplification}(\ref{I:V semisimplification b}), we have $V_\ell \cong W \oplus W$ as $\FF_\ell[\Gal_{\QQ(i)}]$-modules and hence
\begin{equation} \label{E:factorization for beta}
\det(I-\rho_\ell(\sigma) T) = \det(I-\beta(\sigma) T)^2 = (1- \tr(\beta(\sigma)) T + \epsilon(\sigma)  T^2)^2
\end{equation}
for $\sigma\in \Gal_{\QQ(i)}$.  Since $5$ splits in $\QQ(i)$, $\Frob_5$ and $\Frob_3^2$ belong to $\Gal_{\QQ(i)}$.   
By (\ref{E:factorization for beta}) and Lemma~\ref{L:explicit}, we have:
\begin{align*}
(1- \tr(\beta(\Frob_5)) T + \det( \beta(\Frob_5) ) T^2)^2 &\equiv P_5(T) = (1-2/5\cdot T+T^2)^2 \pmod{\ell}\\
(1- \tr(\beta(\Frob_3^2)) T + \det( \beta(\Frob_3^2) ) T^2)^2 &\equiv P_3^{(2)}(T) = (1-2/9\cdot T+T^2)^2 \pmod{\ell}.
\end{align*}
From the unique factorization of $\FF_\ell[T]$, we deduce that $\varepsilon(\Frob_3^2)=1$ and $\varepsilon(\Frob_5)=1$.  

We claim that the character $\varepsilon$ is unramified at all finite places $v$ of $\QQ(i)$ that do not lie over $2$.   Fix a finite place $v$ of $\QQ(i)$ that does not lie over $2$.   If $v$ does not lie over $\ell$, then $\varepsilon$ is unramified at $v$ since $\rho_\ell$ is unramified at primes $p\nmid 2\ell$.  So suppose that $v$ lies over $\ell$.   Let $\calI$ be an inertia subgroup of $\Gal_{\QQ(i)}$ at $v$.  Since $\ell$ is unramified in $\QQ(i)$, the group $\calI$ is also an inertia subgroup of $\Gal_\QQ$ at $\ell$.   By Proposition~\ref{P:ramification at ell}(\ref{P:ramification at ell b}), we have $\beta(\calI) \subseteq \SL_2(\FF_\ell)$ and hence $\varepsilon(\calI)=1$.

Now let $K$ be the fixed field in $\Qbar$ of $\ker(\varepsilon)$.  We have $[K:\QQ(i)]\leq 2$ and the extension $K/\QQ(i)$ is unramified at all finite places not lying over $2$.   Since $\ZZ[i]$ is a PID, $K$ can thus be obtained by adjoining to $\QQ(i)$ the square root of a squarefree element $a\in \ZZ[i]$ that is not divisibly by any prime of $\ZZ[i]$ except $1+i$.    So $a$ is of the form $\pm i^e (1+i)^f$ with $e,f\in \{0,1\}$.   Therefore, $K$ is the field obtained by adjoining to $\QQ(i)$ the square-root of some $a\in \{1, i, 1+i, i(1+i)\}$.

Let $\p_5$ be the prime ideal of $\ZZ[i]$ generated by $2+i$.    We have $\varepsilon(\Frob_{\p_5})=\varepsilon(\Frob_5)=1$ and hence $\p_5$ splits in $K$.   Therefore, $a$ modulo $\p_5$ is a square.    We have $i \equiv -2 \pmod{\p_5}$ and $i(1+i)\equiv 2 \pmod{\p_5}$.   Since $2$ and $-2$ are not squares in $\ZZ[i]/\p_5\cong \FF_5$, we deduce that $a\in \{1, 1+i\}$.

Let $\p_3$ be the prime ideal of $\ZZ[i]$ generated by $3$.   We have $\varepsilon(\Frob_{\p_3})=\varepsilon(\Frob_3^2)=1$ and hence $\p_3$ splits in $K$.  Therefore, $a$ modulo $\p_3$ is a square.   One can check that the image of $1+i$ modulo $\p_3$ generates the group $(\ZZ[i]/\p_3)^\times$ which is a cyclic group of order $8$; in particular, $1+i$ is not a square modulo $\p_3$.   Therefore, $a=1$.   So $K=\QQ(i)$ and hence $\varepsilon =1$.
\end{proof}

\subsection{$L$-functions with $p\equiv 1 \pmod{4}$} \label{SS:L with p eq 1 mod 4}

In this section, we show that the polynomial $P_p(T)$ with $p\equiv 1\pmod{4}$ are of a special form; we will consider the primes $p\equiv 3\pmod{4}$ in \S\ref{SS:L with p eq 3 mod 4}.

\begin{proposition}  \label{P:P form mod 1}
For each prime $p\equiv 1 \pmod{4}$, we have $P_p(T)=(1+b T +T^2)^2$ for a unique $b\in \ZZ[1/p]$.
\end{proposition}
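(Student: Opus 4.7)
The plan is to exploit that $p\equiv 1\pmod{4}$ means $p$ splits in $\QQ(i)$, so that $\Frob_p\in \Gal_{\QQ(i)}$, and then apply the decomposition results of \S\ref{S:over Q(i)} at each sufficiently large auxiliary prime $\ell$. Writing the palindromic polynomial as $P_p(T)=1+aT+cT^2+aT^3+T^4$ (available by Lemma~\ref{L:deg 4}), the desired form $P_p(T)=(1+bT+T^2)^2$ with $b=a/2$ is equivalent to the single identity $D:=a^2-4c+8=0$ in $\ZZ[1/p]$. I will verify that $D\equiv 0\pmod{\ell}$ for every $\ell\geq 11$ with $\ell\neq p$, and then conclude $D=0$ since a fixed element of $\ZZ[1/p]$ divisible by infinitely many primes must vanish.

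To verify the mod-$\ell$ claim, fix such an $\ell$ and apply Proposition~\ref{P:V semisimplification}. In the case $n=1$, the eigenvalues of $\Frob_p$ on $V_\ell^{\sss}\cong W\oplus W\oplus W^\vee\oplus W^\vee$ come in pairs $\beta,\beta^{-1}$ each with multiplicity $2$ (where $\beta:=\beta(\Frob_p)\in\FF_\ell^\times$), giving characteristic polynomial $\bigl(1-(\beta+\beta^{-1})T+T^2\bigr)^2$. In the case $n=2$, $V_\ell\cong W\oplus W$, and Lemma~\ref{L:beta image eq1} ensures $\det\beta(\Frob_p)=1$, so the characteristic polynomial is $\bigl(1-\tr\beta(\Frob_p)\cdot T+T^2\bigr)^2$. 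In both cases, Lemma~\ref{L:compatibility} yields
\[
P_p(T)\equiv (1+b_\ell T+T^2)^2 \pmod{\ell}
\]
for some $b_\ell\in\FF_\ell$; comparing coefficients forces $a\equiv 2b_\ell$ and $c\equiv b_\ell^2+2\pmod{\ell}$, and hence $D\equiv 0\pmod{\ell}$.

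From $D=0$ I obtain $c=(a/2)^2+2$ and hence $P_p(T)=(1+bT+T^2)^2$ with $b:=a/2\in\QQ$. The remaining wrinkle is to verify $b\in\ZZ[1/p]$ rather than just $\tfrac{1}{2}\ZZ[1/p]$: writing $a=u/p^L$ with $u\in\ZZ$ coprime to $p$, the requirement $c=a^2/4+2\in\ZZ[1/p]$ with $p$ odd forces $4\mid u^2$, so $u$ is even and $b\in\ZZ[1/p]$. Uniqueness is then immediate, since $(1+bT+T^2)^2=(1+b'T+T^2)^2$ in $\QQ[T]$ forces $1+bT+T^2=\pm(1+b'T+T^2)$, and the constant-term-$1$ condition rules out the minus sign. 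The hardest step is recognizing that $P_p(T)\pmod{\ell}$ is a perfect square; once Proposition~\ref{P:V semisimplification} and Lemma~\ref{L:beta image eq1} are in hand, the rest is elementary bookkeeping.
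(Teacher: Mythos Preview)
Your proof is correct and follows essentially the same route as the paper. The paper packages the mod-$\ell$ square factorization as a separate lemma (Lemma~\ref{L:square}), proved exactly as you do via Proposition~\ref{P:V semisimplification} and Lemma~\ref{L:beta image eq1}, and then lifts to characteristic zero and verifies $b\in\ZZ[1/p]$ by noting that $L(T,E_p)=P_p(pT)=(1+bpT+p^2T^2)^2$ has integer coefficients; your explicit invariant $D=a^2-4c+8$ and the parity argument for $u$ are just a more hands-on rendering of the same two steps.
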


In terms of our representations $\rho_\ell$, we have the following:
\begin{lemma} \label{L:square}
For every $\sigma\in \Gal_{\QQ(i)}$, we have $\det(I - \rho_\ell(\sigma) T ) = (1+bT +T^2)^2$ for a unique $b\in \FF_\ell$.
\end{lemma}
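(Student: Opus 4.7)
The plan is to split into the two cases provided by Proposition~\ref{P:V semisimplification}, using the fact that the characteristic polynomial of $\rho_\ell(\sigma)$ depends only on the semi-simplification of $V_\ell$ as an $\FF_\ell[\Gal_{\QQ(i)}]$-module.

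First, in the case $n=2$, Proposition~\ref{P:V semisimplification}(\ref{I:V semisimplification b}) gives $V_\ell \cong W\oplus W$ as $\FF_\ell[\Gal_{\QQ(i)}]$-modules, so
\[
\det(I-\rho_\ell(\sigma)T) = \det(I-\beta(\sigma)T)^2.
\]
By Lemma~\ref{L:beta image eq1}, $\det(\beta(\sigma))=1$, so $\det(I-\beta(\sigma)T) = 1-\tr(\beta(\sigma))T + T^2$, and we take $b=-\tr(\beta(\sigma))$.

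Next, in the case $n=1$, Proposition~\ref{P:V semisimplification}(\ref{I:V semisimplification a}) gives
\[
V_\ell^\sss \cong W\oplus W \oplus W^\vee \oplus W^\vee,
\]
where $W$ corresponds to a character $\chi\colon \Gal_{\QQ(i)}\to \FF_\ell^\times$. Since characteristic polynomials are insensitive to semi-simplification,
\[
\det(I-\rho_\ell(\sigma)T) = (1-\chi(\sigma)T)^2(1-\chi(\sigma)^{-1}T)^2 = \bigl(1 - (\chi(\sigma)+\chi(\sigma)^{-1})T + T^2\bigr)^2,
\]
so we take $b=-(\chi(\sigma)+\chi(\sigma)^{-1})$.

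Finally, uniqueness is a simple observation: if $(1+bT+T^2)^2 = (1+b'T+T^2)^2$ in $\FF_\ell[T]$, comparing the coefficient of $T$ yields $2b=2b'$, and since $\ell$ is odd this forces $b=b'$. There is no real obstacle here; this lemma is essentially a corollary that repackages Proposition~\ref{P:V semisimplification} and Lemma~\ref{L:beta image eq1} into a form convenient for proving Proposition~\ref{P:P form mod 1} via the compatibility statement in Lemma~\ref{L:compatibility}.
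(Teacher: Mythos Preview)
Your proof is correct and follows essentially the same approach as the paper: split into the two cases of Proposition~\ref{P:V semisimplification}, use Lemma~\ref{L:beta image eq1} for the $n=2$ case, and handle $n=1$ by the character computation. The only cosmetic differences are the order in which you treat the cases and your argument for uniqueness via the linear coefficient rather than unique factorization in $\FF_\ell[T]$.
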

\begin{proof}   Fix notation as in the beginning of \S\ref{S:over Q(i)} and take any $\sigma\in \Gal_{\QQ(i)}$.   If $n=1$, then Proposition~\ref{P:V semisimplification}(\ref{I:V semisimplification a}) implies that
\[
\det(I-\rho_\ell(\sigma)T) = (1- \beta(\sigma) T)^2 (1- \beta(\sigma)^{-1} T)^2 = (1 - (\beta(\sigma)+\beta(\sigma)^{-1}) T +T^2)^2.
\]
This proves (i) in the case $n=1$; the uniqueness follows from unique factorization. 

We now assume that $n=2$.  By Proposition~\ref{P:V semisimplification}(\ref{I:V semisimplification b}), we have $V_\ell \cong W \oplus W$ as $\FF_\ell[\Gal_{\QQ(i)}]$-modules.  Therefore,
\[
\det(I-\rho_\ell(\sigma) T) = \det(I-\beta(\sigma) T)^2 = (1- \tr(\beta(\sigma)) T + T^2)^2
\]
where the last equality uses Lemma~\ref{L:beta image eq1}.
\end{proof}

\begin{proof}[Proof of Propositions~\ref{P:P form mod 1}]
By Lemma~\ref{L:square}, for each odd prime $p$ and prime $\ell\geq 11$ with $\ell\neq p$, we have
\[
P_p(T) \equiv \det(I - \rho_\ell(\Frob_p) T) = (1+b T + T^2)^2 \pmod{\ell}
\]
for some $b \in \FF_\ell$.   Since $P_p(T)$ modulo $\ell$ is of the form $(1+b T + T^2)^2$ for all but finitely many primes $\ell$, we deduce that $P_p(T)$ is of the form $(1+b T +T^2)^2$ for some $b\in \QQ$.     Therefore, $L(T,E_p)= P_p(pT)=(1+ bp T + p^2 T^2)^2$.  Since $L(T,E_p)$ has integer coefficients, unique factorization shows that $b$ is unique and that $bp \in \ZZ$.  This completes the proof of Proposition~\ref{P:P form mod 1}.
\end{proof}

\section{Orthogonal groups}
Throughout this section, we fix a prime $\ell\geq 11$.

\subsection{The group $\Omega(V_\ell)$}
  Let $\spin \colon \Or(V_\ell) \to \FF_\ell^\times/(\FF_\ell^\times)^2$ be the \defi{spinor norm}, cf.~\cite{MR0148760}.      If $v \in V_\ell$ satisfies $\ang{v}{v} \neq 0$, let $r_v \in \Or(V_\ell)$ be the reflection across $v$ (we have $r_v(v)=-v$ and $r_v(w)=w$ for all $w\in V_\ell$ such that $\ang{v}{w}=0$).   The spinor norm can be characterized by the property that it is the group homomorphism which satisfies $\spin(r_v) = \ang{v}{v} \cdot (\FF_\ell^\times)^2$ for all $v\in V_\ell$ for which $\ang{v}{v}\neq 0$.   
  
 We will use the following to compute spinor norms; it follows from equation (2.1) of \cite{MR0148760}*{\S2} and uses that $V_\ell$ has dimension $4$.

\begin{lemma} \label{L:spinor}
If $A\in \Or(V_\ell)$ satisfies $\det(I+A)\neq 0$, then $\spin(A)=\det(I+A) \cdot (\FF_\ell^\times)^2$.   \qed
\end{lemma}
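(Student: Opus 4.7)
My plan is to combine the Cartan--Dieudonn\'e decomposition of $A$ into reflections with an orthogonal block-decomposition of $V_\ell$. By the Cartan--Dieudonn\'e theorem, any $A\in \Or(V_\ell)$ can be written as $A=r_{v_1}\cdots r_{v_k}$ for some $v_1,\ldots,v_k\in V_\ell$ with $\ang{v_i}{v_i}\neq 0$ and $k\leq \dim V_\ell=4$. By the characterization of the spinor norm recalled just before the lemma, $\spin(A)\equiv \prod_{i=1}^k \ang{v_i}{v_i}\pmod{(\FF_\ell^\times)^2}$, so it suffices to verify
\[
\det(I+A)\equiv \prod_{i=1}^k \ang{v_i}{v_i}\pmod{(\FF_\ell^\times)^2}.
\]

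The hypothesis $\det(I+A)\neq 0$ is equivalent to $-1$ not being an eigenvalue of $A$. Since the characteristic polynomial of an orthogonal transformation is palindromic, $V_\ell$ then decomposes orthogonally into the fixed ($+1$-eigen) subspace $F$ of $A$ together with a collection of $2$-dimensional anisotropic $A$-stable subspaces on each of which $A$ acts as a non-trivial planar rotation. Because $-1$ is not an eigenvalue, $d:=\dim F$ has the same parity as $\dim V_\ell=4$, hence $d\in\{0,2,4\}$ is even. On each $2$-dimensional rotation block, $A$ can be realized as a product $r_{v_i}r_{v_{i+1}}$ of two (necessarily non-orthogonal) reflections inside the block, and a direct $2\times 2$ computation -- the content of equation~(2.1) of \cite{MR0148760} -- shows $\det(I+A)|_{\text{block}}\equiv \ang{v_i}{v_i}\cdot\ang{v_{i+1}}{v_{i+1}}\pmod{(\FF_\ell^\times)^2}$. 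On the fixed subspace, $\det(I+A)|_F=2^d$, which is a square because $d$ is even. Multiplying over the blocks and $F$ yields the desired congruence, and combining with the identity of the first paragraph gives $\spin(A)=\det(I+A)\cdot(\FF_\ell^\times)^2$.

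The main obstacle is the explicit $2\times 2$ calculation on each rotation block, i.e., showing that for non-orthogonal anisotropic $v,w$ spanning a plane, the ratio $\det(I+r_v r_w)/\bigl(\ang{v}{v}\ang{w}{w}\bigr)$ is a nonzero square in $\FF_\ell^\times$; once this is in hand, everything else is multiplicativity and parity bookkeeping. This is also the step where the hypothesis $\dim V_\ell=4$ enters essentially: it forces the exponent $d=\dim F$ to be even, so that the extra factor $2^d$ produced by the fixed subspace is absorbed into $(\FF_\ell^\times)^2$.
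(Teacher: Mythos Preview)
Your orthogonal decomposition of $V_\ell$ into the fixed subspace $F$ together with $2$-dimensional rotation planes implicitly assumes $A$ is semisimple, and this is not automatic. To see the issue concretely, work in a $4$-dimensional orthogonal space that is the orthogonal sum of two hyperbolic planes, with bases $\{e_1,f_1\}$ and $\{e_2,f_2\}$ satisfying $\ang{e_i}{f_j}=\delta_{ij}$ and $\ang{e_i}{e_j}=\ang{f_i}{f_j}=0$; let $A$ fix $e_1,f_2$ and send $e_2\mapsto e_1+e_2$, $f_1\mapsto f_1-f_2$. Then $A$ is orthogonal and unipotent of Jordan type $(2,2)$, so $\det(I+A)=16\neq 0$ and the lemma applies (indeed it holds: $\spin(A)$ is trivial and $16$ is a square). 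But $F=\ker(A-I)$ is the totally isotropic plane spanned by $e_1,f_2$, so $F$ is not an orthogonal direct summand; in fact $\im(A-I)=F$, so $F$ has no $A$-invariant complement at all, and your block decomposition simply does not exist for this $A$. A related slip: your parity argument invokes the palindromic characteristic polynomial to conclude that the multiplicity of $1$ is even, but that gives the \emph{algebraic} multiplicity, which exceeds $d=\dim F$ whenever $A$ is not semisimple at the eigenvalue $1$.

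The paper gives no argument here; it cites equation~(2.1) of Zassenhaus \cite{MR0148760} together with $\dim V_\ell=4$, and Zassenhaus's treatment needs no semisimplicity hypothesis. To rescue your plane-by-plane approach you would first have to handle the unipotent part of $A$ separately---showing it has trivial spinor norm and contributes a square factor to $\det(I+A)$---and that is a genuine additional argument rather than the ``parity bookkeeping'' you describe.
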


Let $\Omega(V_\ell)$ be the simultaneous kernel of $\det\colon \Or(V_\ell) \to \{\pm 1\}$ and $\spin \colon \Or(V_\ell) \to \FF_\ell^\times/(\FF_\ell^\times)^2$.     We could also define $\Omega(V_\ell)$ as the commutator subgroup of $\Or(V_\ell)$.   We now show that the image of $\rho_\ell$ lies in this smaller group.

\begin{lemma} \label{L:main containment}
We have $-I \in \Omega(V_\ell)$ and $\rho_\ell(\Gal_\QQ) \subseteq \Omega(V_\ell)$.
\end{lemma}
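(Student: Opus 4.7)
The strategy is to verify that the character $\chi := \spin \circ \rho_\ell \colon \Gal_\QQ \to \FF_\ell^\times/(\FF_\ell^\times)^2$ is trivial; combined with Lemma~\ref{L:SO image}, this places $\rho_\ell(\Gal_\QQ)$ inside $\Omega(V_\ell)$. Everything rests on a single Frobenius computation. Set $g := \rho_\ell(\Frob_3) \in \SO(V_\ell)$. By Lemmas~\ref{L:compatibility} and \ref{L:explicit}, $\det(I - g T) \equiv P_3(T) = 1 - \tfrac{2}{9}T^2 + T^4 \pmod{\ell}$, so both $\det(I+g) = P_3(-1)$ and $\det(I-g) = P_3(1)$ reduce to $16/9 \equiv (4/3)^2$ in $\FF_\ell^\times$, a nonzero square for $\ell \geq 11$. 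Lemma~\ref{L:spinor} then gives $\spin(g) \equiv 1$, and hence $\rho_\ell(\Frob_3) \in \Omega(V_\ell)$. To deduce $-I \in \Omega(V_\ell)$, I factor $-I = g \cdot (-g^{-1})$; the palindromic identity $T^4 P_p(1/T) = P_p(T)$ from Lemma~\ref{L:deg 4} forces $\det(I - g^{-1}) = \det(I - g) = 16/9$, so $\det(I + (-g^{-1}))$ is a nonzero square and Lemma~\ref{L:spinor} yields $\spin(-g^{-1}) \equiv 1$. Therefore $\spin(-I) = \spin(g) \cdot \spin(-g^{-1}) = 1$.

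Next I would treat $\sigma \in \Gal_{\QQ(i)}$. By Lemma~\ref{L:square}, $\det(I - \rho_\ell(\sigma) T) = (1 + bT + T^2)^2$ for some $b \in \FF_\ell$, so $\det(I + \rho_\ell(\sigma)) = (2-b)^2$. When $b \neq 2$ this is a nonzero square, giving $\spin(\rho_\ell(\sigma)) \equiv 1$ via Lemma~\ref{L:spinor}. Setting $G := \rho_\ell(\Gal_{\QQ(i)})$ and $H := \{h \in G : b(h) = 2\}$, we have $G \setminus H \subseteq \ker(\spin|_G)$, and $\ker(\spin|_G)$ is a subgroup of $G$ of index dividing $2$. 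It therefore suffices to prove the counting bound $|H|/|G| < 1/2$, for then $\ker(\spin|_G) = G$. I would verify this case-by-case using Proposition~\ref{P:V semisimplification}. In the $n=1$ case, $W \not\cong W^\vee$ forces $\beta^2 \neq 1$, so $\beta$ has order at least $3$ and $|H|/|G| \leq 1/3$. In the $n=2$ case, $\beta(\Gal_{\QQ(i)})$ lies in $\SL_2(\FF_\ell)$ by Lemma~\ref{L:beta image eq1} and acts irreducibly on $W$; a short case-check using the classification of such finite subgroups (non-split Cartan, its normalizer, binary tetrahedral/octahedral/icosahedral, or the full $\SL_2(\FF_\ell)$) bounds the number of trace-$(-2)$ elements by $\max(\ell/(\ell^2-1),\, 1/12) \cdot |G|$, which is less than $|G|/2$ for $\ell \geq 11$.

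Finally, since $3$ is inert in $\QQ(i)$, $\Frob_3 \notin \Gal_{\QQ(i)}$ and $\Gal_\QQ = \Gal_{\QQ(i)} \sqcup \Frob_3 \cdot \Gal_{\QQ(i)}$. The two previous steps together with the subgroup property of $\Omega(V_\ell)$ then give $\rho_\ell(\Gal_\QQ) \subseteq \Omega(V_\ell)$. The main obstacle in this plan is the bound $|H|/|G| < 1/2$ in the $n=2$ case: verifying it rigorously requires invoking the Dickson-style classification of finite irreducible subgroups of $\SL_2(\FF_\ell)$ together with an inventory of trace-$(-2)$ elements in each type, and it is at this step that the hypothesis $\ell \geq 11$ is essential, through the need for the torus orders $\ell \pm 1$ to be large and for the small exceptional phenomena at $\ell = 7$ to be avoided.
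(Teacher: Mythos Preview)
Your proof is correct and tracks the paper's argument for most of the steps: the Frobenius-at-$3$ calculation, the reduction from $\Gal_\QQ$ to $\Gal_{\QQ(i)}$ via the inertness of $3$, and the $b \neq 2$ case for $\sigma \in \Gal_{\QQ(i)}$ all appear verbatim in the paper. Two points of comparison are worth noting. First, your treatment of $-I$ takes an unnecessary detour through $-g^{-1}$ and the palindromic identity; the paper simply observes that $\det(I + (-g)) = \det(I - g) = P_3(1) = (4/3)^2$ directly, so $\spin(-g)$ is trivial and $\spin(-I) = \spin(g)\cdot\spin(-g)$. Second, and more substantively, your handling of the $b=2$ case is far heavier than needed. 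The paper avoids any classification: if $b = 2$ then $\det(I - \rho_\ell(\sigma)T) = (1+T)^4$, so $\rho_\ell(\sigma)$ is $-I$ times a unipotent element, and since $\ell$ is odd there is an $e \geq 0$ with $\rho_\ell(\sigma)^{\ell^e} = -I$; hence $\spin(\rho_\ell(\sigma))^{\ell^e} = \spin(-I) = 1$, and as $\spin$ takes values in a group of exponent $2$ while $\ell^e$ is odd, $\spin(\rho_\ell(\sigma)) = 1$. Your counting approach via the Dickson classification does work (though your list of irreducible subgroups should also include subgroups of the normalizer of a \emph{split} Cartan, where the same trace bound holds trivially), but it buys nothing over this two-line argument and introduces a spurious appeal to the subgroup structure of $\SL_2(\FF_\ell)$.
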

\begin{proof}
We have $\rho_\ell(\Gal_\QQ) \subseteq \SO(V_\ell)$ by Lemma~\ref{L:SO image} and $\det(-I)=(-1)^4=1$.   It thus remains to show that $\spin(-I)=(\FF_\ell^\times)^2$ and that $\spin(\rho_\ell(\sigma))=(\FF_\ell^\times)^2$ for all $\sigma\in \Gal_\QQ$.

By Lemma~\ref{L:explicit}, we have $\det(I-\rho_\ell(\Frob_3)T)\equiv P_3(T) = 1 - 2/9\cdot T^2 + T^4 \pmod{\ell}$.   Since $\det(I\pm \rho_\ell(\Frob_3)) \equiv P_3(\pm 1) = (4/3)^2 \pmod{\ell}$, Lemma~\ref{L:spinor} implies that  $\spin(\rho_\ell(\Frob_3))$ and $\spin(-\rho_\ell(\Frob_3))$ both equal $(\FF_\ell^\times)^2$.   Therefore, $\spin(-I)= \spin(\rho_\ell(\Frob_3))\cdot \spin(-\rho_\ell(\Frob_3))= (\FF_\ell^\times)^2$.    Since $3$ is inert in $\QQ(i)$ and $\spin(\rho_\ell(\Frob_3))=(\FF_\ell^\times)^2$, it suffices to show that $\spin(\rho_\ell(\sigma))=(\FF_\ell^\times)^2$ for all $\sigma \in \Gal_{\QQ(i)}$.   

Take any $\sigma \in \Gal_{\QQ(i)}$.   By Lemma~\ref{L:square}, we have $\det(I - \rho_\ell(\sigma) T ) = (1+bT +T^2)^2$ for a unique $b\in \FF_\ell$.  If $b\neq 2$, then by Lemma~\ref{L:spinor} we have 
\[
\spin(\rho_\ell(\sigma))= \det(I+\rho_\ell(\sigma))\cdot (\FF_\ell^\times)^2 = (2-b)^2 \cdot (\FF_\ell^\times)^2 = (\FF_\ell^\times)^2.
\]
So suppose that $b=2$, and hence $\det(I - \rho_\ell(\sigma) T ) = (1+2T +T^2)^2 = (1+T)^4$.   Since $\ell$ is odd, there is an $e\geq 0$ such that $\rho_\ell(\sigma)^{\ell^e} = -I$.    Therefore, $\spin(\rho_\ell(\sigma))=\spin(\rho_\ell(\sigma))^{\ell^e} = \spin(-I) = (\FF_\ell^\times)^2$.
\end{proof}

\newpage

\subsection{The representation $\vartheta_\ell$} \label{SS:psi}

Define the $4$-dimensional $\FF_\ell$-vector space $\calV_\ell := \FF_\ell^2 \otimes_{\FF_\ell} \FF_\ell^2$.    We have a natural action of $\SL_2(\FF_\ell)\times \SL_2(\FF_\ell)$ on $\calV_\ell$ with $(-I,-I)$ acting trivially; we denote the image in $\Aut_{\FF_\ell}(\calV_\ell)$ by $\SL_2(\FF_\ell) \otimes \SL_2(\FF_\ell)$.

\begin{lemma} \label{L:new psi}
There is an isomorphism $V_\ell \cong \calV_\ell$ of vector spaces such that the induced isomorphism $\Aut_{\FF_\ell}(V_\ell) \cong \Aut_{\FF_\ell}(\calV_\ell)$ of groups gives an isomorphism $\psi_\ell \colon \Omega(V_\ell) \xrightarrow{\sim} \SL_2(\FF_\ell) \otimes \SL_2(\FF_\ell)$.
\end{lemma}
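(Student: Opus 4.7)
The lemma is a form of the classical exceptional isomorphism $\Omega_4^+(\FF_\ell) \cong (\SL_2(\FF_\ell) \times \SL_2(\FF_\ell))/\{\pm(I,I)\}$.  The plan is to realize $\SL_2(\FF_\ell) \otimes \SL_2(\FF_\ell)$ concretely as $\Omega(\calV_\ell, B)$ for a standard plus-type form $B$ on $\calV_\ell$, and then to check that $(V_\ell, \ang{\,}{\,})$ is itself of plus type, so that the two quadratic spaces are isometric over $\FF_\ell$.

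For the model, equip $\FF_\ell^2$ with its standard symplectic form $\omega$ and put $B := \omega \otimes \omega$ on $\calV_\ell$.  Then $B$ is non-degenerate, symmetric, and of plus type, since $\FF_\ell e_1 \otimes \FF_\ell^2$ is a totally isotropic $2$-plane.  The natural action of $\SL_2(\FF_\ell) \times \SL_2(\FF_\ell)$ preserves $B$ with kernel $\{\pm(I,I)\}$, and hence factors through an injection $\SL_2(\FF_\ell) \otimes \SL_2(\FF_\ell) \hookrightarrow \Or(\calV_\ell)$.  A direct computation of $\det$ and $\spin$ on generators (for instance on pairs $(U, I)$ with $U$ upper-triangular unipotent, using Lemma~\ref{L:spinor}) places the image inside $\Omega(\calV_\ell)$.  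Comparing cardinalities $|\SL_2(\FF_\ell) \otimes \SL_2(\FF_\ell)| = \ell^2(\ell^2 - 1)^2/2 = |\Omega(\calV_\ell)|$ then forces equality $\SL_2(\FF_\ell) \otimes \SL_2(\FF_\ell) = \Omega(\calV_\ell)$.

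For the plus type of $(V_\ell, \ang{\,}{\,})$, recall that a non-degenerate symmetric bilinear form on a $4$-dimensional $\FF_\ell$-vector space is of plus type if and only if its discriminant is a square in $\FF_\ell^\times/(\FF_\ell^\times)^2$.  In any orthogonal basis $e_1, \ldots, e_4$ of $V_\ell$ one has the factorization $-I = r_{e_1} r_{e_2} r_{e_3} r_{e_4}$, whence
\[
\spin(-I) = \prod_{i=1}^{4} \ang{e_i}{e_i} \cdot (\FF_\ell^\times)^2 = \disc(V_\ell, \ang{\,}{\,}) \cdot (\FF_\ell^\times)^2.
\]
By Lemma~\ref{L:main containment}, $-I \in \Omega(V_\ell)$, so $\spin(-I) = (\FF_\ell^\times)^2$; therefore the discriminant of $V_\ell$ is a square and $(V_\ell, \ang{\,}{\,})$ is of plus type.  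It follows that $(V_\ell, \ang{\,}{\,})$ is isometric to $(\calV_\ell, B)$, and any such isometry yields an isomorphism of vector spaces $V_\ell \cong \calV_\ell$ inducing the required group isomorphism $\psi_\ell \colon \Omega(V_\ell) \xrightarrow{\sim} \Omega(\calV_\ell) = \SL_2(\FF_\ell) \otimes \SL_2(\FF_\ell)$.  The main obstacle in the proof is the bookkeeping of the second step above, namely checking that the image of $\SL_2(\FF_\ell) \times \SL_2(\FF_\ell)$ is precisely the commutator subgroup $\Omega(\calV_\ell)$ rather than merely contained in $\SO(\calV_\ell)$; the plus-type verification for $V_\ell$ is a clean consequence of Lemma~\ref{L:main containment}.
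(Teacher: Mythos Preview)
Your proof is correct and follows essentially the same route as the paper: construct the tensor-of-symplectic form on $\calV_\ell$, identify $\SL_2(\FF_\ell)\otimes\SL_2(\FF_\ell)$ with $\Omega(\calV_\ell)$, and then use $\spin(-I)=(\FF_\ell^\times)^2$ from Lemma~\ref{L:main containment} to see that $(V_\ell,\ang{\,}{\,})$ is of plus type and hence isometric to $(\calV_\ell,B)$. The only cosmetic differences are that you verify the image lands in $\Omega(\calV_\ell)$ by computing $\det$ and $\spin$ on unipotent generators and then compare orders directly, whereas the paper asserts the containment (tacitly using that $\SL_2(\FF_\ell)\times\SL_2(\FF_\ell)$ is perfect for $\ell\geq 5$) and deduces equality by passing to the quotient $\Omega(\calV_\ell)/\{\pm I\}$ and invoking the Atlas isomorphism $\Omega_4^+(\ell)/\{\pm I\}\cong\PSL_2(\FF_\ell)\times\PSL_2(\FF_\ell)$.
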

\begin{proof}
Let $\{e,f\}$ be the standard basis of the vector space $\calW:=\FF_\ell^2$ over $\FF_\ell$.   Let $h\colon \calW\times \calW \to \FF_\ell$ be the alternating bilinear pairing that satisfies $h(e,f)=1$.   The group of automorphisms of the vector space $\calW$ that respect the pairing $h$ is $\SL(\calW)=\SL_2(\FF_\ell)$.      Let $b$ be the bilinear pairing on $\calV_\ell=\calW \otimes_{\FF_\ell} \calW$ that satisfies  $b(v_1 \otimes w_1, v_2\otimes w_2)= h(v_1,v_2) h(w_1,w_2)$.  One can show that $b$ is symmetric and non-degenerate.    

As before, we can define $\Or(\calV_\ell)$ to be the group of automorphisms of $\calV_\ell$ that preserve the pairing $b$, and we define $\Omega(\calV_\ell)$ to be the simultaneous kernels of the determinant $\det\colon \Or(\calV_\ell)\to\{\pm 1\}$ and the spinor norm $\spin_{\calV_\ell}\colon \Or(\calV_\ell)\to \FF_\ell^\times/(\FF_\ell^\times)^2$.     By \cite{MR0148760}*{(2.3)}, $\spin_{\calV_\ell}(-I)$ agrees with the discriminant of the pairing $b$ on $\calV_\ell$.   One can then check that $\spin_{\calV_\ell}(-I)=(\FF_\ell^\times)^2$. 

Up to isomorphism, there are two $4$-dimensional vector spaces over $\FF_\ell$ equipped with a bilinear, symmetric and non-degenerate pairing. They can be distinguished by the spinor norm of $-I$ (see \cite{MR0344216}*{IV~\S1.7} where it is stated in terms of quadratic forms; the discriminant of the corresponding quadratic form agrees with the spinor norm of $-I$).  We have $\dim_{\FF_\ell} \calV_\ell = 4 = \dim_{\FF_\ell} V_\ell$ and $\spin_{\calV_\ell}(-I)=(\FF_\ell^\times)^2=\spin(-I)$ where the last equality uses Lemma~\ref{L:main containment}.  There is thus an isomorphism $\varphi\colon \calV_\ell \to V_\ell$ of $\FF_\ell$-vector spaces such that $\ang{\varphi(v)}{ \varphi(w)}= b(v,w)$ for all $v,w \in \calV_\ell$.   It now suffices to prove the lemma for $\Omega(\calV_\ell)$ instead of $\Omega(V_\ell)$.   

Since $\spin_{\calV_\ell}(-I)=(\FF_\ell^\times)^2$, $\Omega(\calV_\ell)$ is isomorphic to the group denoted by $\Omega_{4}^+(\ell)$ in \cite{Atlas}*{\S2}.    There is an exceptional isomorphism $\Omega_4^+(\ell)/\{\pm I\}\cong \PSL_2(\FF_\ell)\times \PSL_2(\FF_\ell)$, so $\Omega(\calV_\ell)/\{\pm I\}$ is isomorphic to $\PSL_2(\FF_\ell)\times \PSL_2(\FF_\ell)$.  We have a natural action of $\SL_2(\FF_\ell)\times \SL_2(\FF_\ell)$ on $\calV_\ell$ with $(-I,-I)$ acting trivially.  This action respects the pairing $b$ and gives rise to an injective homomorphism
\begin{equation} \label{E:xi}
\xi\colon\SL_2(\FF_\ell) \otimes \SL_2(\FF_\ell) =  (\SL_2(\FF_\ell) \times \SL_2(\FF_\ell))/\{\pm(I,I)\}  \hookrightarrow \Omega(\calV_\ell).
\end{equation}
Quotienting out by the subgroup generated by $(I,-I)$, $\xi$ gives rise to an injective homomorphism $\bbar\xi\colon \PSL_2(\FF_\ell)\times \PSL_2(\FF_\ell) \hookrightarrow \Omega(\calV_\ell)/\{\pm I\}$ that must be an isomorphism by cardinality considerations.  That $\bbar\xi$ is an isomorphism implies that $\xi$ is also an isomorphism.
\end{proof}

Let  $\widetilde\rho_\ell \colon \Gal_\QQ \to \SL_2(\FF_\ell) \otimes \SL_2(\FF_\ell)$ be the representation obtained by composing $\rho_\ell$ with the isomorphism $\psi_\ell$ of Lemma~\ref{L:new psi} (this of course uses Lemma~\ref{L:main containment}).   Let 
\[
\vartheta_\ell \colon \Gal_\QQ \to \PSL_2(\FF_\ell)
\] 
be the homomorphism obtained by composing $\widetilde\rho_\ell$ with the homomorphism $\SL_2(\FF_\ell) \otimes \SL_2(\FF_\ell) \to \PSL_2(\FF_\ell)$ which maps $A\otimes B$ to the image of $A$ in $\PSL_2(\FF_\ell)$.   Similarly, we define $\vartheta_\ell' \colon \Gal_\QQ \to \PSL_2(\FF_\ell)$ except projecting on the second factor.

\begin{lemma} \label{L:need to switch}
After possibly switching $\vartheta_\ell$ and $\vartheta_\ell'$, we may assume that the group $\vartheta_\ell'(\Gal_{\QQ(i)})$ has cardinality $1$ or $\ell$.  For $\sigma\in \Gal_{\QQ(i)}$, we have $\tr(\vartheta_\ell(\sigma))=\pm b$ where $\det(I-\rho_\ell(\sigma)T)=(1+bT+T^2)^2$.
\end{lemma}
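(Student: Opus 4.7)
My plan is to combine a pointwise characteristic-polynomial calculation with the structural decomposition of $V_\ell$ provided by Proposition~\ref{P:V semisimplification}, and then to read off the trace identity directly.

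Every element of $\SL_2(\FF_\ell)\otimes \SL_2(\FF_\ell)$ can be written as $A\otimes B$ with the pair $(A,B)\in \SL_2(\FF_\ell) \times \SL_2(\FF_\ell)$ determined up to the common sign change $(A,B)\sim (-A,-B)$, so $\vartheta_\ell(\sigma) = [A_\sigma]$ and $\vartheta_\ell'(\sigma) = [B_\sigma]$ in $\PSL_2(\FF_\ell)$.  A standard eigenvalue computation gives
\[
\det\bigl(I - (A\otimes B) T\bigr) \;=\; \prod_{i,j=1}^{2}\bigl(1 - \alpha_i\beta_j T\bigr),
\]
where $\alpha_1,\alpha_2$ and $\beta_1,\beta_2$ are the eigenvalues of $A,B$ over $\FFbar_\ell$, subject to $\alpha_1\alpha_2 = \beta_1\beta_2 = 1$.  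For $\sigma\in \Gal_{\QQ(i)}$, Lemma~\ref{L:square} writes this polynomial as $(1+bT+T^2)^2 = \bigl((1-\gamma T)(1-\gamma^{-1}T)\bigr)^2$ with $\gamma+\gamma^{-1}=-b$.  A short case analysis matching the multiset $\{\alpha_i\beta_j\}_{i,j}$ to $\{\gamma,\gamma,\gamma^{-1},\gamma^{-1}\}$ and using the reciprocal pairings $(\alpha_1\beta_1)(\alpha_2\beta_2) = (\alpha_1\beta_2)(\alpha_2\beta_1) = 1$ forces either $\alpha_1=\alpha_2\in \{\pm 1\}$ or $\beta_1=\beta_2\in\{\pm 1\}$.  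Thus for each individual $\sigma\in\Gal_{\QQ(i)}$, one of $A_\sigma,B_\sigma$ has trace $\pm 2$, equivalently one of $\vartheta_\ell(\sigma), \vartheta_\ell'(\sigma)$ has order $1$ or $\ell$ in $\PSL_2(\FF_\ell)$.

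To globalize this pointwise statement I invoke Proposition~\ref{P:V semisimplification}.  In case $n=2$, we have $V_\ell\cong W\oplus W$ as an $\FF_\ell[\Gal_{\QQ(i)}]$-module with $\beta(\Gal_{\QQ(i)})\subseteq \SL(W)=\SL_2(\FF_\ell)$ by Lemma~\ref{L:beta image eq1}.  Writing $V_\ell = W\otimes \FF_\ell^2$ with trivial action on the second factor, and observing that the tensor product of the alternating pairings on $W$ and on $\FF_\ell^2$ recovers the symmetric pairing on $V_\ell$, the identification of Lemma~\ref{L:new psi} can be chosen so that $\widetilde\rho_\ell(\sigma)=\beta(\sigma)\otimes I$.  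In case $n=1$, the $Q_8$-equivariance of Lemma~\ref{L:group Q} produces a Galois-stable $2$-dimensional subspace $L\subset V_\ell$ on which $\Gal_{\QQ(i)}$ acts as the scalar $\beta$, and a parallel compatibility argument matching $L$ with a slice of the form $\ell_0\otimes \FF_\ell^2$ in the tensor factorization forces $B_\sigma\in\{\pm I\}$.  In both cases, after possibly exchanging the two $\SL_2$-factors in $\SL_2\otimes\SL_2$ (i.e.\ swapping $\vartheta_\ell$ with $\vartheta_\ell'$), the image $\vartheta_\ell'(\Gal_{\QQ(i)})$ is contained either in the trivial subgroup or in a single Sylow $\ell$-subgroup of $\PSL_2(\FF_\ell)$, hence has cardinality $1$ or $\ell$.

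The trace identity is then immediate.  With $B_\sigma$ having both eigenvalues equal to some $\beta_1\in\{\pm 1\}$, the product formula yields
\[
\det\bigl(I - \widetilde\rho_\ell(\sigma) T\bigr)
  \;=\; \bigl(1 - \beta_1 \tr(A_\sigma) T + T^2\bigr)^2,
\]
and matching with $(1+bT+T^2)^2$ gives $\tr(A_\sigma) = -\beta_1 b = \mp b$.  Since the trace of an element of $\PSL_2(\FF_\ell)$ is only defined up to sign, this yields $\tr(\vartheta_\ell(\sigma)) = \pm b$.  The main obstacle will be the structural step in case $n=1$: one must verify that any non-semisimple extension permitted by Proposition~\ref{P:V semisimplification}(\ref{I:V semisimplification a}) is compatible, under the identification of Lemma~\ref{L:new psi}, with having $B_\sigma\in\{\pm I\}$ throughout, and this relies on the interplay between the $Q_8$-symmetry from Lemma~\ref{L:group Q} and the self-duality of the pairing on $V_\ell$.
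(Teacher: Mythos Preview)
Your pointwise eigenvalue analysis---showing that for each $\sigma\in\Gal_{\QQ(i)}$ at least one of $\vartheta_\ell(\sigma),\vartheta_\ell'(\sigma)$ has order $1$ or $\ell$---is correct and matches the paper exactly. The divergence is in the globalization step, and here your approach is both harder and incomplete.

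You try to promote the pointwise statement to a global one by invoking Proposition~\ref{P:V semisimplification} and aligning the decomposition $V_\ell\cong W\oplus W$ (or its $n=1$ analogue) with the tensor factorization underlying $\psi_\ell$. In the $n=2$ case this can be made to work, but it requires checking that the two copies of $W$ are totally isotropic for $\ang{\,}{\,}$ so that the hyperbolic form on $W\oplus W$ really matches $h_W\otimes h_{\FF_\ell^2}$; that check in turn uses that the only $\Gal_{\QQ(i)}$-invariant bilinear form on $W$ is (a scalar multiple of) the alternating one, which needs $W$ to be absolutely irreducible---something not established at this point in the paper. In the $n=1$ case you acknowledge the gap yourself: Proposition~\ref{P:V semisimplification}(\ref{I:V semisimplification a}) only controls the semisimplification, and the compatibility of a possibly non-split extension with a slice $\ell_0\otimes\FF_\ell^2$ is not argued.

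The paper sidesteps all of this with a two-line group-theoretic trick that uses nothing beyond the pointwise claim. If the conclusion fails for both orderings, pick $\sigma_1$ with $\vartheta_\ell(\sigma_1)^\ell\neq 1$ and $\sigma_2$ with $\vartheta_\ell'(\sigma_2)^\ell\neq 1$; by the pointwise claim $\vartheta_\ell'(\sigma_1)$ and $\vartheta_\ell(\sigma_2)$ have order dividing $\ell$, so after replacing each $\sigma_i$ by its $\ell$-th power we have $\vartheta_\ell'(\sigma_1)=1$ and $\vartheta_\ell(\sigma_2)=1$, whence $\sigma:=\sigma_1\sigma_2$ satisfies $\vartheta_\ell(\sigma)^\ell\neq 1$ and $\vartheta_\ell'(\sigma)^\ell\neq 1$, contradicting the pointwise claim. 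Once one of the images consists entirely of elements of order $1$ or $\ell$, it lies in a single Sylow $\ell$-subgroup and hence has cardinality $1$ or $\ell$. This argument is independent of $n$ and of any compatibility between the $W$-decomposition and the tensor structure; I recommend replacing your structural globalization with it. Your trace computation at the end is fine as written.
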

\begin{proof}
Take any $\sigma\in \Gal_{\QQ(i)}$.    Choose $A,B \in \SL_2(\FF_\ell)$ such that $\psi_\ell(\rho_\ell(\sigma)) = \widetilde\rho_\ell(\sigma)$ equals $A\otimes B$.  Take $\lambda_1, \lambda_2\in \FFbar_\ell^\times$ such that the eigenvalues of $A$ and $B$ are $\{\lambda_1,\lambda_1^{-1}\}$ and $\{\lambda_2,\lambda_2^{-1}\}$, respectively.  The roots of $\det(I-\rho_\ell(\sigma)T)=\det(I-\widetilde\rho_\ell(\sigma)T)$ in $\FFbar_\ell$ are thus $\lambda_1 \lambda_2$, $\lambda_1\lambda_2^{-1}$, $\lambda_1^{-1}\lambda_2$ and $\lambda_1^{-1}\lambda_2^{-1}$.  

We claim that the image of $A$ or $B$ in $\PSL_2(\FF_\ell)$ has order belonging to $\{1,\ell\}$.    It suffices to prove that $\lambda_1=\pm 1$ or $\lambda_2=\pm 1$.   Since $\det(I-\rho_\ell(\sigma)T)$ is a square by Lemma~\ref{L:square}, $\lambda_1\lambda_2^{-1}$ equals $\lambda_1\lambda_2$, $\lambda_1^{-1}\lambda_2^{-1}$ or $\lambda_1^{-1}\lambda_2$.   If $\lambda_1\lambda_2^{-1}=\lambda_1\lambda_2$, then $\lambda_2=\pm 1$.   If $\lambda_1\lambda_2^{-1}=\lambda_1^{-1}\lambda_2^{-1}$, then $\lambda_1=\pm 1$.      Finally, suppose that $\lambda_1\lambda_2^{-1}$ equals $\lambda_1^{-1} \lambda_2=(\lambda_1 \lambda_2^{-1})^{-1}$ and hence $\lambda_1=\varepsilon \lambda_2$ for some $\varepsilon\in \{\pm 1\}$.    The roots of $\det(I-gT)$ are thus $\varepsilon \lambda_2^2$, $\varepsilon$, $\varepsilon$ and $\varepsilon \lambda_2^{-2}$.    Since $\det(I-\rho_\ell(\sigma)T)$ is a square, we have $\varepsilon \lambda_2^2 = \varepsilon \lambda_2^{-2}$ and hence $\lambda_2^4=1$.  So if $\lambda_2\neq \pm 1$, then $\lambda_2^2=-1$ and hence $\det(I-gT)=(1-T)^2(1+T)^2=(1-T^2)^2$; however, this is impossible by Lemma~\ref{L:square}.  This proves the claim.

If the image of $B$ in $\PSL_2(\FF_\ell)$ has order $1$ or $\ell$, then $\lambda_2$ equals some $\varepsilon\in \{\pm 1\}$ and hence $\det(I-\rho_\ell(\sigma)T)= (1- \varepsilon \lambda_1 T)^2 (1-\varepsilon \lambda_1^{-1} T)^2= (1 - \varepsilon(\lambda_1+\lambda_1^{-1})T + T^2 )^2 = (1-\varepsilon \tr(A) T +T^2)^2$.

To complete the proof of the lemma, it remains to show, after possibly swapping $\vartheta_\ell$ and $\vartheta_\ell'$, that $\vartheta_\ell'(\Gal_{\QQ(i)})$ has cardinality $1$ or $\ell$.   If this is not true, then there are elements $\sigma_1$ and $\sigma_2$ of $\Gal_{\QQ(i)}$ such that $\vartheta_\ell(\sigma_1)^\ell \neq 1$ and $\vartheta_\ell'(\sigma_2)^{\ell} \neq 1$.   The order of $\vartheta_\ell'(\sigma_1)$ and $\vartheta_\ell(\sigma_2)$ are $1$ or $\ell$ by our claim.   After replacing $\sigma_1$ and $\sigma_2$ by an $\ell$-th power, we may assume that $\vartheta_\ell(\sigma_2)=1$ and $\vartheta_\ell'(\sigma_1)=1$.    So with $\sigma:=\sigma_1 \sigma_2$ we have $\vartheta_\ell(\sigma)^\ell \neq 1$ and $\vartheta_\ell'(\sigma)^\ell \neq 1$.
This is a contradiction since $\vartheta_\ell(\sigma)^\ell=1$ or $\vartheta_\ell'(\sigma)^\ell=1$ by our claim. 
\end{proof}

From now on, we take $\vartheta_\ell\colon \Gal_\QQ \to \PSL_2(\FF_\ell)$ and $\vartheta_\ell'\colon \Gal_\QQ \to \PSL_2(\FF_\ell)$ as in Lemma~\ref{L:need to switch}.

\begin{lemma} \label{L:other coset}
For $\sigma\in \Gal_\QQ-\Gal_{\QQ(i)}$, we have $\tr(\vartheta_\ell(\sigma))= \pm b$ for some $b\in \FF_\ell$ which satisfies $\det(I-\rho_\ell(\sigma)T)=1+(b^2-2)T^2+T^4$.
\end{lemma}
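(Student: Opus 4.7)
The plan is to exploit the quaternion-group action $\varphi\colon Q\to \Aut_{\FF_\ell}(V_\ell)$ from Lemma~\ref{L:group Q} to obtain a block decomposition of $\rho_\ell(\sigma)$ for $\sigma\in \Gal_\QQ-\Gal_{\QQ(i)}$. First, since $\sigma$ acts as complex conjugation on $\QQ(i)$ and the automorphism $\alpha_1\colon (x,y,t)\mapsto (x,iy,-t)$ of $E$ is defined over $\QQ(i)$, applying $\sigma$ to its coefficients gives $\alpha_1^{-1}$. By functoriality of the cohomological action, $\rho_\ell(\sigma)$ conjugates $J:=\varphi(\alpha_1)\in \Aut_{\FF_\ell}(V_\ell)$ to $J^{-1}$; since $J^2=\varphi(\alpha_1^2)=\varphi(-1)=-I$, we have $J^{-1}=-J$, so $\rho_\ell(\sigma)$ anti-commutes with $J$.

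Next I would extend scalars to $\FFbar_\ell$. Then $J$ diagonalizes with eigenvalues $\pm\sqrt{-1}$, giving a decomposition $V_\ell\otimes_{\FF_\ell}\FFbar_\ell=V^+\oplus V^-$ into $2$-dimensional eigenspaces; the anti-commutation forces $\rho_\ell(\sigma)$ to interchange $V^+$ and $V^-$, so in a suitable basis $\rho_\ell(\sigma)$ is block antidiagonal. Its characteristic polynomial is therefore a polynomial in $T^2$, and using $\det(\rho_\ell(\sigma))=1$ from Lemma~\ref{L:SO image} we obtain $\det(I-\rho_\ell(\sigma)T)=1+cT^2+T^4$ for some $c\in\FF_\ell$. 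On the other hand, writing $\widetilde\rho_\ell(\sigma)=A\otimes B$ under the isomorphism of Lemma~\ref{L:new psi}, the standard tensor eigenvalue computation yields
\[
\det(I-(A\otimes B)T)=1-\tr(A)\tr(B)T+(\tr(A)^2+\tr(B)^2-2)T^2-\tr(A)\tr(B)T^3+T^4,
\]
so matching the two expressions forces $\tr(A)\tr(B)=0$ and $c=\tr(A)^2+\tr(B)^2-2$.

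To conclude I would show $\tr(B)=0$. Since $J\in \Omega(V_\ell)\cong \SL_2(\FF_\ell)\otimes \SL_2(\FF_\ell)$ commutes with $\widetilde\rho_\ell(\Gal_{\QQ(i)})$, and since by Lemma~\ref{L:need to switch} the image $\vartheta_\ell'(\Gal_{\QQ(i)})$ has cardinality $1$ or $\ell$ while the first-factor image of $\rho_\ell(\Gal_{\QQ(i)})$ is large (in the case $n=2$ of Proposition~\ref{P:V semisimplification}, $V_\ell\cong W\oplus W$ with $W$ irreducible of dimension $2$), a centralizer argument places $J$ in the subgroup $\{\pm I\}\otimes \SL_2(\FF_\ell)$, say $J=I\otimes D$ with $D^2=-I$. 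The anti-commutation of $A\otimes B$ with $I\otimes D$ then translates into $B$ anti-commuting with $D$ in $\SL_2(\FF_\ell)$, which in the eigenbasis of $D$ over $\FFbar_\ell$ makes $B$ off-diagonal, so $\tr(B)=0$. Taking $b:=\tr(A)$ (a sign-lift of $\tr(\vartheta_\ell(\sigma))$) then yields $\det(I-\rho_\ell(\sigma)T)=1+(b^2-2)T^2+T^4$ and $\tr(\vartheta_\ell(\sigma))=\pm b$, as desired. The hard step is this centralizer argument placing $J$ in the second factor; in the degenerate $n=1$ case of Proposition~\ref{P:V semisimplification} one must separately rule out the alternative $\tr(B)=\pm 2$, which one can do by invoking that $\det(I-\rho_\ell(\Frob_3)T)\equiv P_3(T)=1-2/9\cdot T^2+T^4\pmod\ell$ is not a perfect square for $\ell\geq 11$ (by Lemma~\ref{L:explicit}).
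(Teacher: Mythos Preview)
Your anti-commutation idea is a pleasant geometric alternative for showing that $\det(I-\rho_\ell(\sigma)T)$ is a polynomial in $T^2$: the relation $\rho_\ell(\sigma)J\rho_\ell(\sigma)^{-1}=J^{-1}=-J$ is correct and immediately yields $\tr(A)\tr(B)=0$. The paper never uses $J$ here; it instead argues directly from Lemma~\ref{L:need to switch} that the eigenvalue $\lambda_2$ of $B$ satisfies $\lambda_2^4=1$, observes that $\lambda_2$ (up to sign) is \emph{independent of the choice of} $\sigma\in\Gal_\QQ-\Gal_{\QQ(i)}$ because $\vartheta_\ell'(\Gal_{\QQ(i)})$ has order $1$ or $\ell$, and then rules out $\lambda_2=\pm1$ by noting that this would force $\det(I-\rho_\ell(\Frob_3)T)\equiv P_3(T)\pmod\ell$ to be a square, contradicting the discriminant computation. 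So the paper's route is more elementary and uniform in $n$.

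The genuine gap in your proposal is the centralizer step. You assert that in the $n=2$ case ``the first-factor image of $\rho_\ell(\Gal_{\QQ(i)})$ is large'' and hence $J\in\{\pm I\}\otimes\SL_2(\FF_\ell)$, but at this point in the paper nothing of the sort is known: Lemma~\ref{L:need to switch} only bounds $\vartheta_\ell'(\Gal_{\QQ(i)})$, and the irreducibility of $W$ says something about $\beta(\Gal_{\QQ(i)})$, not about $\vartheta_\ell(\Gal_{\QQ(i)})$ under the abstract isomorphism $\psi_\ell$. Even granting that the first-factor image acts irreducibly, an irreducible subgroup of $\PSL_2(\FF_\ell)$ (e.g.\ a non-split Cartan) can have nontrivial centralizer, so the conclusion $J=\pm I\otimes D$ does not follow. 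You also silently assume $J\in\Omega(V_\ell)$, which is true but requires checking (e.g.\ via Lemma~\ref{L:spinor}: the anti-commutation forces the $\pm\sqrt{-1}$-eigenspaces of $J$ to have equal dimension~$2$, whence $\det(I+J)=4$). The fix is simple: drop the centralizer argument entirely and apply your $P_3$ observation uniformly. From $\lambda_2^4=1$ (which you can get either from your $\tr(A)\tr(B)=0$ combined with $\vartheta_\ell'(\sigma^2)$ having order dividing $\ell$, or directly as the paper does) and the independence of $\lambda_2$ from $\sigma$, the single computation for $\Frob_3$ excludes $\lambda_2=\pm1$ in all cases at once.
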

\begin{proof}
Fix $\sigma \in \Gal_\QQ - \Gal_{\QQ(i)}$.  Choose $A,B \in \SL_2(\FF_\ell)$ such that $\psi_\ell(\rho_\ell(\sigma)) = \widetilde\rho_\ell(\sigma)$ equals $A\otimes B$.  The image of $A$ and $B$ in $\PSL_2(\FF_\ell)$ is $\vartheta_\ell(\sigma)$ and $\vartheta_\ell'(\sigma)$, respectively.   
Take $\lambda_1, \lambda_2\in \FFbar_\ell^\times$ such that the eigenvalues of $A$ and $B$ are $\{\lambda_1,\lambda_1^{-1}\}$ and $\{\lambda_2,\lambda_2^{-1}\}$, respectively.  The roots of $\det(I-\rho_\ell(\sigma)T)$ in $\FFbar_\ell$ are thus $\lambda_1 \lambda_2$, $\lambda_1\lambda_2^{-1}$, $\lambda_1^{-1}\lambda_2$ and $\lambda_1^{-1}\lambda_2^{-1}$.  

By Lemma~\ref{L:need to switch} and our choice of $\vartheta_\ell'$, the coset of $B^2$ in $\PSL_2(\FF_\ell)$ has order $1$ or $\ell$.   Therefore, $\lambda_2^4=1$.   Since the group $\vartheta_\ell'(\Gal_{\QQ(i)})$ has order $1$ or $\ell$ by Lemma~\ref{L:need to switch}, we find that $\lambda_2$, up to a sign, does not depend on the initial choice of $\sigma\in \Gal_\QQ-\Gal_{\QQ(i)}$.  

Suppose that $\lambda_2=\pm 1$, and hence $\det(I - \rho_\ell(\sigma)T)$ equals $\det(I-AT)^2$ or $\det(I+AT)^2$ for each $\sigma \in \Gal_\QQ - \Gal_{\QQ(i)}$.   Since $3$ is inert in $\QQ(i)$, the polynomial $P_3(T)\equiv \det(I-\rho_\ell(\Frob_3)T) \pmod{\ell}$ must be a square.   The discriminant of $P_3(T)$, by Lemma~\ref{L:explicit}, is $2^{16}5^2/3^8$.  Since $\ell\geq 11$, we find that $P_3(T)\pmod{\ell}$ is separable which contradicts that it is a square.  

Therefore, $\lambda_2$ is a primitive $4$-th root of unity and hence
\begin{align*}
\det(I-(A\otimes B) T) &=(1-\lambda_1 \lambda_2 T) (1-\lambda_1\lambda_2^{-1} T)(1-\lambda_1^{-1} \lambda_2 T)(1-\lambda_1^{-1} \lambda_2^{-1} T)\\ 
&= (1- \lambda_1(\lambda_2+\lambda_2^{-1})T+\lambda_1^2 T^2)(1- \lambda_1^{-1}(\lambda_2+\lambda_2^{-1})T+\lambda_1^{-2} T^2)\\
&= (1+ \lambda_1^2 T^2)(1+ \lambda_1^{-2} T^2)\\
&= 1+(\lambda_1^2+\lambda_1^{-2})T^2 + T^4=1+((\tr A)^2-2)T^2+T^4. 
\end{align*}
The lemma is now immediate.
\end{proof}

\subsection{$L$-functions with $p\equiv 3 \pmod{4}$} \label{SS:L with p eq 3 mod 4}

We now show that the polynomial $P_p(T)$ with $p\equiv 3\pmod{4}$ are of a special form; we considered the primes $p\equiv 1\pmod{4}$ in \S\ref{SS:L with p eq 1 mod 4}.

\begin{proposition}  \label{P:P form mod 3}
For each prime $p\equiv 3 \pmod{4}$, we have $P_p(T)=1+(b^2-2) T^2 +T^4$ for a unique non-negative $b\in \ZZ[1/p]$.
\end{proposition}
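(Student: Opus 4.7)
The plan is to mirror the strategy used for Proposition~\ref{P:P form mod 1}, replacing Lemma~\ref{L:square} with Lemma~\ref{L:other coset}. Since $p\equiv 3\pmod{4}$, the prime $p$ is inert in $\QQ(i)$, so the (conjugacy class of the) Frobenius $\Frob_p$ lies in $\Gal_\QQ-\Gal_{\QQ(i)}$, which is exactly the regime where Lemma~\ref{L:other coset} applies.

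First, fix any prime $\ell\geq 11$ with $\ell\neq p$. By Lemma~\ref{L:compatibility},
\[
P_p(T)\equiv\det(I-\rho_\ell(\Frob_p)T)\pmod{\ell},
\]
and by Lemma~\ref{L:other coset} the right-hand side is of the form $1+(b_\ell^2-2)T^2+T^4$ for some $b_\ell\in\FF_\ell$. The coefficients of $T$ and $T^3$ in $P_p(T)$ are therefore divisible by $\ell$ for infinitely many $\ell$, so they must vanish. Together with Lemma~\ref{L:deg 4} this forces
\[
P_p(T)=1+c\,T^2+T^4
\]
for a unique $c\in\ZZ[1/p]$, and moreover $c+2\equiv b_\ell^2\pmod{\ell}$ for every such $\ell$.

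It remains to produce a non-negative $b\in\ZZ[1/p]$ with $c+2=b^2$. The previous paragraph says that $c+2$ is a square in $\FF_\ell$ for all but finitely many primes $\ell$. A standard consequence of Chebotarev's theorem, applied to the at-most-quadratic extension $\QQ(\sqrt{c+2})/\QQ$, is that a rational number which is a square modulo almost every prime is already a square in $\QQ$: were the extension a genuine quadratic one, a positive proportion of primes would be inert in it. Hence $c+2=b^2$ for some (automatically non-negative) $b\in\QQ$. Writing $b=m/n$ in lowest terms, the relation $b^2\in\ZZ[1/p]$ forces $n^2\mid p^k$ for some $k$, so $n$ is a power of $p$ and $b\in\ZZ[1/p]$. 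Uniqueness of the non-negative square root is automatic.

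The only non-formal step, and the main obstacle to anticipate, is this global-to-local principle at the end; everything else is direct bookkeeping with Lemma~\ref{L:other coset} and Lemma~\ref{L:compatibility}, exactly parallel to the proof of Proposition~\ref{P:P form mod 1}.
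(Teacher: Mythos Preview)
Your argument is correct and follows essentially the same route as the paper's proof: reduce modulo infinitely many $\ell$ via Lemma~\ref{L:other coset} (using that $p$ inert in $\QQ(i)$ puts $\Frob_p$ in the non-trivial coset), conclude that the odd coefficients of $P_p(T)$ vanish, and then argue that the middle coefficient has the form $b^2-2$ with $b\in\ZZ[1/p]$. The paper is terser at the key step---it simply asserts that being of the form $1+(b^2-2)T^2+T^4$ modulo almost every $\ell$ forces the same shape over $\QQ$---whereas you spell this out via the Chebotarev argument for $\QQ(\sqrt{c+2})/\QQ$; and the paper extracts $bp\in\ZZ$ from the integrality of the coefficients of $L(T,E_p)=P_p(pT)$ rather than from $b^2\in\ZZ[1/p]$ directly, but these are cosmetic differences.
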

\begin{proof}
Fix a prime $p\equiv 3\pmod{4}$.     By Lemma~\ref{L:other coset}, for each prime $\ell\geq 11$ with $\ell\neq p$ we have
\[
P_p(T) \equiv \det(I - \rho_\ell(\Frob_p) T) = 1+(b^2-2) T^2 +T^4 \pmod{\ell}
\]
for some $b\in \FF_\ell$.    Since $P_p(T)$ modulo $\ell$ is of the form $ 1+(b^2-2) T^2 +T^4$ modulo $\ell$ for all but finitely many primes $\ell$, we deduce that $P_p(T)$ is of the form $ 1+(b^2-2) T^2 +T^4$ for some $b\in \QQ$.     Therefore, $L(T,E_p)= P_p(pT)= 1+(p^2b^2-2p^2) T^2 +p^4T^4$.  Since $L(T,E_p)$ has integer coefficients, unique factorization shows that $b^2$ is uniquely determined and that $bp \in \ZZ$.   Uniqueness for $b$ is obtained by imposing the condition $b\geq 0$.
\end{proof}

\section{Ramification at $2$} \label{S:ramification at 2}

Fix an inertia subgroup $\calI_2$ of $\Gal_\QQ$ at the prime $2$.     The goal of this section is to prove the following.

\begin{proposition} \label{P:explicit unipotent}
For any prime $\ell\geq 11$ and $g\in \calI_2$, $\rho_{\ell}(g)^{12}$ is unipotent.
\end{proposition}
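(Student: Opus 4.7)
My plan is to lift to characteristic zero, identify $V_\ell$ as a subquotient of the $\ell$-adic cohomology of the surface $X$, and apply Grothendieck's local monodromy theorem to a suitable semistable model of $X$ over a modestly ramified extension of $\ZZ_2$.

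First I would define the $\ell$-adic representation $\rho_\ell^{\mathrm{int}}\colon \Gal_\QQ \to \Aut(V_\ell^{\mathrm{int}})$ on $V_\ell^{\mathrm{int}} := H^1_c(U_\Qbar, T_\ell E) \otimes_{\ZZ_\ell} \QQ_\ell$, whose reduction modulo $\ell$ on the natural $\ZZ_\ell$-lattice recovers $\rho_\ell$.  Since an endomorphism is unipotent iff its characteristic polynomial equals $(T-1)^n$, and this condition descends from $V_\ell^{\mathrm{int}}$ through its integral lattice to the mod-$\ell$ quotient $V_\ell$, it suffices to prove that $\rho_\ell^{\mathrm{int}}(g)^{12}$ is unipotent for every $g \in \calI_2$.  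By the $\ell$-adic analog of Lemma~\ref{L:X connection}, $V_\ell^{\mathrm{int}}$ is a $\Gal_\QQ$-subquotient of $H^2(X_\Qbar, \QQ_\ell)(1)$; since the cyclotomic character is unramified at $2$, the Tate twist is trivial as a $\calI_2$-module, and it therefore suffices to show that $g^{12}$ acts unipotently on $H^2(X_\Qbar, \QQ_\ell)$ for all $g \in \calI_2$.

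I would then invoke Grothendieck's local monodromy theorem in its semistable form (Deligne, Rapoport--Zink): if $X_{\QQ_2}$ admits a regular proper model over the ring of integers $\calO_K$ of a finite extension $K/\QQ_2$ with strict normal crossings special fiber, then the inertia subgroup $\calI_K \subseteq \calI_2$ acts unipotently on $H^i(X_\Qbar, \QQ_\ell)$.  If moreover $[K:\QQ_2]$ divides $12$, then $\calI_2/\calI_K$ has exponent dividing $12$, so $g^{12} \in \calI_K$ for every $g\in\calI_2$ and hence acts unipotently.  To construct such $K$, I would follow the blueprint of Proposition~\ref{P:Caruso}: take $K=\QQ_2(\pi)$ with $\pi$ a root of $2$ of order dividing $12$, and apply Tate's algorithm to the Weierstrass equation of $E$ over $\calO_K[t]$ to build a semistable model of $X_K$.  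The exponent $12 = \lcm(1,2,3,4,6)$ is the universal bound for cyclic orders of local monodromy across all Kodaira fiber types of an elliptic fibration, so a degree-$12$ extension suffices to resolve every Kodaira singularity into semistable ($I_n$) form.

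The main obstacle lies in the geometric construction.  At each of the four horizontal bad fibers $t \in \{0,1,-1,\infty\}$ one must run Tate's algorithm on $E/\QQ_2(t)$ to determine the Kodaira type and confirm that its local monodromy has order dividing $12$.  Simultaneously, the genuine bad reduction of the Weierstrass equation at $p=2$ itself (the discriminant $16\cdot t^{10}(t-1)^8(t+1)^8$ carries a factor of $16$) must be resolved by the same extension.  Exhibiting a single model over $\calO_K$ with $[K:\QQ_2]\mid 12$ that is regular, proper, and has strict normal crossings special fiber requires explicit changes of variable and blow-ups analogous to---but more intricate than---the construction used for good reduction at $\ell$ in the proof of Proposition~\ref{P:Caruso}.
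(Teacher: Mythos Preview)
Your approach is orthogonal to the paper's. The paper never builds a semistable model of $X$ over any extension of $\ZZ_2$. Instead it proceeds as follows. Grothendieck's quasi-unipotence theorem guarantees that for each $g\in\calI_2$ there is a least $m_g\geq 1$ with $\varphi_\ell(g)^{m_g}$ unipotent on $H^2_{\et}(X_\Qbar,\QQ_\ell)$; Ochiai's theorem on $\ell$-independence of the trace of monodromy shows that $m_g$ does not depend on $\ell$. Via Lemma~\ref{L:X connection}, for any prime $\ell\geq 11$ with $\ell\nmid m_g$ this same $m_g$ is the least exponent making $\rho_\ell(g)$ unipotent. But the exceptional isomorphism $\Omega(V_\ell)/\{\pm I\}\cong\PSL_2(\FF_\ell)\times\PSL_2(\FF_\ell)$ of \S\ref{SS:psi}, together with Lemma~\ref{L:need to switch}, forces the order of $\rho_\ell(g)$ to divide one of $4\ell$, $\lcm(4,\ell-1)$, $\lcm(4,\ell+1)$. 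Stripping the $\ell$-part and letting $\ell$ run through arithmetic progressions (Dirichlet) yields $m_g\mid 12$. The whole argument is arithmetic and group-theoretic; no $2$-adic geometry beyond quasi-unipotence is used.

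There is a real gap in your plan, and it is not just that the construction is laborious: your heuristic for why degree $12$ should suffice is aimed at the wrong object. The exponent $12=\lcm(1,2,3,4,6)$ bounds the local monodromy of the fibration $X_\Qbar\to\PP^1_\Qbar$ at the singular fibers $t\in\{0,\pm1,\infty\}$ --- a statement in the $t$-direction, already visible over $\Qbar$ and unchanged by any extension of $\QQ_2$. What you need is control over the degeneration of the \emph{surface} $X$ at the prime $2$, that is, the minimal ramification of $K/\QQ_2$ over which $X_K$ acquires a regular proper model with strict normal crossings special fiber. No general principle links Kodaira monodromy of the generic-fiber fibration to this quantity. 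Over $\FF_2$ the Weierstrass equation degenerates further (the divisors $t-1$ and $t+1$ coalesce, and the factor $16$ in the discriminant vanishes), so the special fiber of any naive integral model has singularities that the Kodaira list over $\Qbar(t)$ does not predict. You acknowledge that the vertical bad reduction must be handled ``simultaneously,'' but you give no reason why a degree-$12$ base change resolves it. Unless you can actually exhibit the semistable model or invoke a theorem bounding its field of definition for this surface, the argument is incomplete --- and the paper's method was devised precisely to avoid this computation.
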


Take any $\ell\geq 11$.  Let 
\[
\varphi_\ell \colon \Gal_\QQ \to \Aut_{\QQ_\ell}(H^2_{\et}(X_\Qbar,\QQ_\ell))
\]
be the representation describing the Galois action on $H^2_{\et}(X_\Qbar,\QQ_\ell)$.   Grothendieck proved that there is an open subgroup $\calI'$ of $\calI_2$ such that $\varphi_\ell(g)$ is unipotent for all $g\in \calI'$; see the appendix of \cite{MR0236190}.   Thus for each $g\in \calI_2$, some positive power of $\varphi_\ell(g)$ is unipotent.   For each $g\in \calI_2$, let $m_g$ be the smallest positive integer for which $\varphi_\ell(g)^{m_g}$ is unipotent.  

\begin{lemma}
The integer $m_g$ does not depend on $\ell$.
\end{lemma}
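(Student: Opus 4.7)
The plan is to reduce the claim to $\ell$-independence of the characteristic polynomial of $\varphi_\ell(g)$. Indeed, $\varphi_\ell(g)^m$ is unipotent if and only if every eigenvalue of $\varphi_\ell(g)$ in $\overline{\QQ}_\ell^{\times}$ is an $m$-th root of unity, and by Grothendieck's monodromy theorem (invoked just before the lemma) all these eigenvalues are roots of unity. Hence $m_g$ equals the lcm of their orders, and it suffices to show that the multiset of eigenvalues of $\varphi_\ell(g)$ is independent of $\ell$ (equivalently, that $\det(T\cdot I-\varphi_\ell(g))$ is the reduction of a single polynomial in $\QQ[T]$ for every $\ell\neq 2$).

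To get this, I will pass to a finite Galois extension $K/\QQ_2$ over which $X$ acquires strict semistable reduction, with regular proper flat model $\calX/\OO_K$; such a $K$ exists for surfaces by Mumford's theorem, or in general by de Jong's alterations. Writing $\calI_K\subseteq\calI_2$ for the corresponding open inertia subgroup, $G:=\calI_2/\calI_K$, and $n$ for the order of the image of $g$ in the finite group $G$ (manifestly independent of $\ell$), the semistable reduction theorem says $\varphi_\ell(\calI_K)$ is unipotent for every $\ell\neq 2$ simultaneously, so $\varphi_\ell(g)^n$ is unipotent and $m_g\mid n$. To pin down $m_g$ exactly, I will invoke the Rapoport--Zink weight-monodromy spectral sequence to express the semisimplification $V_\ell^{\sss}$ of $V_\ell:=H^2_{\et}(X_\Qbar,\QQ_\ell)$ in terms of the $\ell$-adic cohomology (with Tate twists) of the smooth proper strata $\calX_s^{(j)}$ of the special fiber obtained as $j$-fold intersections of irreducible components. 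The induced $G$-action on these finitely many strata comes from descent data on $\calX_K \to X_{\QQ_2}$ and is an $\ell$-independent geometric datum. Since traces of finite-order automorphisms of smooth proper varieties over $\Fbar_2$ on $\ell$-adic cohomology are integers independent of $\ell$ (after Deligne), the traces of $\varphi_\ell(g)^k$ on $V_\ell^{\sss}$ are $\ell$-independent integers for every $k\geq 0$, and hence so is the characteristic polynomial of $\varphi_\ell(g)$.

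The main obstacle will be constructing the semistable model $\calX$ concretely enough to verify that the $G$-action on the strata $\calX_s^{(j)}$ is $\Fbar_2$-rational and $\ell$-independent; for this specific elliptic surface one can presumably obtain $\calX$ by resolving the Weierstrass equation (\ref{E:main}) over an appropriate tamely ramified extension of $\QQ_2$, in parallel with the good-reduction analysis carried out in the proof of Proposition~\ref{P:Caruso}. Once the model is in hand, Deligne's trace integrality closes the argument essentially formally.
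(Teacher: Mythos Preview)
Your reduction to $\ell$-independence of the characteristic polynomial (equivalently, of all traces $\tr(\varphi_\ell(g)^k)$) is correct, and your Rapoport--Zink strategy is in principle how one proves such statements. But it is considerably more work than necessary, and it leaves real obligations: you need the finite group $G=\calI_2/\calI_K$ to act on the semistable model $\calX$ (the descent data live on $X_K$, not a priori on $\calX$), and you need the weight spectral sequence to be $G$-equivariant. These can be arranged, but you are essentially reproving a theorem that already exists in the literature.

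The paper's argument is shorter and avoids all of this. The key observation is that since the eigenvalues of $\varphi_\ell(g)$ are roots of unity, $\varphi_\ell(g)$ is unipotent if and only if $\tr(\varphi_\ell(g))$ equals the Betti number $d=\dim_{\QQ_\ell} H^2_{\et}(X_{\Qbar},\QQ_\ell)$ (the sum of $d$ complex numbers of absolute value $1$ equals $d$ only when they are all $1$). Thus one only needs $\ell$-independence of the single integer $\tr(\varphi_\ell(g))$, not of the full characteristic polynomial. This is exactly the content of Ochiai's theorem on $\ell$-independence of the trace of monodromy for smooth proper varieties (Corollary~2.5 of \cite{MR1715253}), which the paper simply cites. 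Applying this with $g$ replaced by $g^m$ for each $m$ gives the result. Your approach buys the stronger conclusion that the entire characteristic polynomial is $\ell$-independent, but at the cost of redoing the hard analytic/geometric input that Ochiai already packaged.
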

\begin{proof}
Take any $g\in \calI_2$.   It suffices to show that $\varphi_\ell(g)$ is unipotent for one prime $\ell$ if and only if it is unipotent for all $\ell$.  Let $d$ be the dimension of $H^2_{\et}(X_\Qbar,\QQ_\ell)$ over $\QQ_\ell$; it does not depend on $\ell$.

Define $t_g:=\tr(\varphi_\ell(g))$.   Since some power of $\varphi_\ell(g)$ is unipotent, we find that the eigenvalues of $\varphi_\ell(g)$ are roots of unity.   Therefore, $\varphi_\ell(g)$ is unipotent if and only if $t_g=d$.    It thus suffices to show that $t_g$ is an integer that does not depend on $\ell$.   This follows from Corollary~2.5 of \cite{MR1715253} and uses that $X$ is a smooth proper surface.  
\end{proof}

Let $\phi_\ell$ be the Galois representation of \S\ref{SS:intro surface}.  For a prime $\ell$ and $g\in \calI_2$, let $n_{g,\ell}$ be the smallest positive integer for which $\phi_{\ell}(g)^{n_{g,\ell}}$ is unipotent.  

\begin{lemma}  \label{L:mg prop}
Take any prime $\ell\geq 11$ and  $g\in \calI_2$.   
\begin{romanenum}
\item \label{L:mg prop divides}
The integer $n_{g,\ell}$ divides $m_g$.
\item \label{L:mg prop eq}
If $\ell\nmid m_g$, then $m_g = n_{g,\ell}$.
\end{romanenum}
\end{lemma}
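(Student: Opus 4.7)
The plan is to use the integral étale cohomology module $M := H^2_{\et}(X_\Qbar,\ZZ_\ell)$ as a bridge between the two representations. After quotienting by its (finite) torsion, $M$ is a $\Gal_\QQ$-stable $\ZZ_\ell$-lattice whose scalar extensions recover $\varphi_\ell = M\otimes_{\ZZ_\ell}\QQ_\ell$ and (up to a universal-coefficient correction that is harmless for $\ell\geq 11$) $\phi_\ell = M\otimes_{\ZZ_\ell}\FF_\ell$. Consequently, the characteristic polynomial of $\phi_\ell(g)$ in $\FF_\ell[T]$ is the mod-$\ell$ reduction of the characteristic polynomial of $\varphi_\ell(g)$ in $\ZZ_\ell[T]$, and the multiset of eigenvalues of $\phi_\ell(g)$ in $\overline{\FF}_\ell$ is obtained by reducing modulo $\ell$ the multiset of eigenvalues of $\varphi_\ell(g)$ in $\overline{\ZZ}_\ell$.

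The second ingredient I will use is the elementary observation that if $A$ is a square matrix whose eigenvalues are roots of unity, then the smallest positive integer $n$ for which $A^n$ is unipotent is exactly the least common multiple of the orders of those eigenvalues. This applies to $\varphi_\ell(g)$ by the lemma immediately preceding this one, and to $\phi_\ell(g)$ because its eigenvalues are reductions of the former. Thus $m_g$ equals the $\lcm$ of the orders of the eigenvalues of $\varphi_\ell(g)$ in $\overline{\ZZ}_\ell^\times$, and $n_{g,\ell}$ equals the $\lcm$ of the orders of the eigenvalues of $\phi_\ell(g)$ in $\overline{\FF}_\ell^\times$.

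For part~(i), the reduction modulo $\ell$ of an $m_g$-th root of unity is again an $m_g$-th root of unity, so every eigenvalue of $\phi_\ell(g)$ has order dividing $m_g$, and taking the lcm gives $n_{g,\ell}\mid m_g$. For part~(ii), the hypothesis $\ell\nmid m_g$ forces every eigenvalue of $\varphi_\ell(g)$ to have order coprime to $\ell$; but the reduction map, restricted to the roots of unity of order prime to $\ell$, is an order-preserving isomorphism onto the prime-to-$\ell$ roots of unity in $\overline{\FF}_\ell^\times$ (Teichm\"uller/Hensel). Hence the two $\lcm$'s coincide and $m_g = n_{g,\ell}$.

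The only real obstacle I anticipate is making precise the identification between $\phi_\ell$ and the mod-$\ell$ reduction of an integral form of $\varphi_\ell$: a priori, $\ell$-torsion in $H^2_{\et}(X_\Qbar,\ZZ_\ell)$ or $H^3_{\et}(X_\Qbar,\ZZ_\ell)$ could contribute to a discrepancy between characteristic polynomials. Since $X$ is smooth projective and $\ell\geq 11$, this is controlled by a short universal-coefficient argument and does not affect the eigenvalue computations above; I expect this to be the only point requiring more than a one-line justification.
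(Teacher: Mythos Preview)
Your approach is correct and essentially the same as the paper's: both pass through the $\ZZ_\ell$-lattice $\Lambda=H^2_{\et}(X_\Qbar,\ZZ_\ell)$, observe that a unipotent operator on $\Lambda$ reduces to a unipotent operator on $\Lambda/\ell\Lambda\cong H^2_{\et}(X_\Qbar,\FF_\ell)$, and then compare orders of the (root-of-unity) eigenvalues under reduction modulo $\ell$. The torsion obstacle you anticipate is resolved in the paper not by appealing to $\ell\geq 11$ but by Lemma~\ref{L:free over Zl}, which shows (via the topology of $X(\CC)$) that $H^2_{\et}(X_\Qbar,\ZZ_\ell)$ is already a free $\ZZ_\ell$-module, so no quotienting or universal-coefficient correction is needed.
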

\begin{proof}
By Lemma~\ref{L:free over Zl}, we can identify $\varphi_\ell$ with the representation $\Gal_\QQ \to \Aut_{\ZZ_\ell}(\Lambda)$ describing the Galois action on $\Lambda:=H^2_{\et}(X_\Qbar,\ZZ_\ell)$.   Again by Lemma~\ref{L:free over Zl}, the quotient $\Lambda/\ell\Lambda$ is isomorphic to $H^2_\et(X_\Qbar,\FF_\ell)$ and the action of $\Gal_\QQ$ on the quotient gives rise to $\phi_\ell$.  Since $\varphi_\ell(g)^{m_g}$ is unipotent, we deduce that $\phi_\ell(g)^{m_g}$ is unipotent.  Therefore, $n_{g,\ell}$ divides $m_g$ and $m_g/n_{g,\ell}$ is a power of $\ell$.  This proves (\ref{L:mg prop divides}) and we have $m_g=n_{g,\ell}$ when $\ell\nmid m_g$.
\end{proof}

\begin{proof}[Proof of Proposition~\ref{P:explicit unipotent}]
Fix $g\in \calI_2$.    Take any prime $\ell\geq 11$.    By Lemma~\ref{L:X connection}, $n_{g,\ell}$ is also the smallest positive integer for which $\rho_{\ell}(g)^{n_{g,\ell}}$ is unipotent.   By Lemma~\ref{L:mg prop}(\ref{L:mg prop divides}), it suffices to prove that $m_g$ divides $12$.  

Now take any prime $\ell\geq 11$ which does not divide $m_g$.  By Lemma~\ref{L:mg prop}(\ref{L:mg prop eq}), $m_g$ is the smallest positive integer for which $\rho_\ell(g)^{m_g}$ is unipotent.

The order of $\vartheta_\ell'(g)$ divides $2$.   The order of any element of $\PSL_2(\FF_\ell)$, and in particular $\vartheta_\ell(g)$, divides $\ell$, $(\ell-1)/2$ or $(\ell+1)/2$.   Therefore, the order of the image of $\rho_\ell(g)$ in $\Omega(V_\ell)/\{\pm I\}$ divides $2\ell$, $\lcm(2,(\ell-1)/2)$ or $\lcm(2,(\ell+1)/2)$.    The order $e_{g,\ell}$ of $\rho_\ell(g)$ thus divides $4\ell$, $\lcm(4,\ell-1)$ or $\lcm(4,\ell+1)$.     

Since $m_g$ divides $e_{g,\ell}$ and is not divisible by $\ell$, we deduce that $m_g$ divides $\lcm(4,\ell-1)$ or $\lcm(4,\ell+1)$ for all sufficiently large primes $\ell$.   Using Dirichlet's theorem on arithmetic progressions, we can then deduce from this that $m_g$ divides $12$.  
\end{proof}

\section{Proof of Theorems~\ref{T:main}, \ref{T:main 2} and \ref{T:main 3}} \label{S:main proof}

Fix a prime $\ell\geq 11$.   Let $\calI$ and $\calI_2$ be inertia subgroups of $\Gal_\QQ$ corresponding to the primes $\ell$ and $2$, respectively.

 Let $\rho_\ell\colon \Gal_\QQ \to \Or(V_\ell)$ be the representation of \S\ref{SS:intro rep}; its image is contained in $\Omega(V_\ell)$ by Lemma~\ref{L:main containment}.   Let $\vartheta_\ell$ and $\vartheta_\ell'$ be the homomorphisms $\Gal_\QQ \to \PSL_2(\FF_\ell)$ from \S\ref{SS:psi} chosen so that they satisfy the conclusion of Lemma~\ref{L:need to switch}.   
 
 We shall prove that $\vartheta_\ell$ is surjective.   To do this, we first describe the maximal subgroups of $\PSL_2(\FF_\ell)$.  The description of subgroups of $\GL_2(\FF_\ell)$ from \S2.4 and \S2.6 of \cite{MR0387283} shows that if $M$ is a maximal subgroup of $\PSL_2(\FF_\ell)$, then one of the following holds:
\begin{itemize}
\item $M$ is a Borel subgroup,
\item $M$ is the normalizer of a Cartan subgroup,
\item $M$ is isomorphic to $\mathfrak{A}_4$, $\mathfrak{S}_4$ or $\mathfrak{A}_5$.
\end{itemize}

A Borel subgroup of $\PSL_2(\FF_\ell)$ is a group whose inverse image in $\SL_2(\FF_\ell)$ is conjugate to the subgroup of upper triangular matrices.  

A Cartan subgroup $C$ of $\PSL_2(\FF_\ell)$ is a maximal cyclic subgroup whose order is relatively prime to $\ell$.  The group $C$ is cyclic of order $(\ell-1)/2$ or $(\ell+1)/2$; we say that $C$ is \defi{split} or \defi{non-split}, respectively.    Let $N$ be the normalizer of $C$ in $\PSL_2(\FF_\ell)$.   The group $C$ has index $2$ in $N$ and one can show that $\tr(A)=0$ for all $A\in N-C$.

\subsection{Borel case}  \label{SS:Borel}

Assume that $\vartheta_\ell(\Gal_\QQ)$ is contained in a Borel subgroup $\bbar{B}$ of $\PSL_2(\FF_\ell)$.  Let $B$ be the inverse image of $\bbar{B}$ under the quotient homomorphism $\SL_2(\FF_\ell)\to \PSL_2(\FF_\ell)$.  There is a non-zero vector $v\in \FF_\ell^2$ such that the subspace $\FF_\ell\cdot  v$ is stable under the action of $B$.  Let $\varphi\colon B \to \FF_\ell^\times$ be the homomorphism such that $Av=\varphi(A) v$ for all $A\in B$; it gives rise to a character $\bbar{\varphi} \colon \bbar{B} \to \FF_\ell^\times/\{\pm 1\}$.   Let $\alpha \colon \Gal_\QQ \to \FF_\ell^\times/\{\pm 1\}$ be the character $\bbar{\varphi} \circ \vartheta_\ell$.

\begin{lemma} \label{L:root lift}
For each $\sigma\in \Gal_{\QQ(i)}$, there is a root $a\in \FF_\ell^\times$ of $\det(I-\rho_\ell(\sigma) T)$ whose image in $\FF_\ell^\times/\{\pm 1\}$ is $\alpha(\sigma)$.
\end{lemma}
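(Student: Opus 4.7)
The plan is to unwind the definitions through the isomorphism $\psi_\ell$ of Lemma~\ref{L:new psi}, exploiting the constraint on $\vartheta_\ell'(\sigma)$ supplied by Lemma~\ref{L:need to switch}.

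Fix $\sigma \in \Gal_{\QQ(i)}$ and choose $A,B \in \SL_2(\FF_\ell)$ lifting $\vartheta_\ell(\sigma)$ and $\vartheta_\ell'(\sigma)$, respectively, so that $\widetilde\rho_\ell(\sigma) = \psi_\ell(\rho_\ell(\sigma)) = A\otimes B$ in $\SL_2(\FF_\ell)\otimes \SL_2(\FF_\ell)$. If $\lambda,\lambda^{-1} \in \FFbar_\ell^\times$ are the eigenvalues of $A$ and $\mu,\mu^{-1}$ are those of $B$, then the four roots of $\det(I - \rho_\ell(\sigma)T) = \det(I - (A\otimes B)T)$ in $\FFbar_\ell$ are the products $\lambda^{\pm 1}\mu^{\pm 1}$.

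Next I would pin down the eigenvalues of $B$. By Lemma~\ref{L:need to switch}, $\vartheta_\ell'(\sigma)$ has order $1$ or $\ell$ in $\PSL_2(\FF_\ell)$, so its lift $B$ has order dividing $2\ell$ in $\SL_2(\FF_\ell)$. Every such element is of the form $B = \varepsilon U$ for some $\varepsilon \in \{\pm 1\}$ and some unipotent $U$, and hence both eigenvalues of $B$ are equal to the same $\varepsilon \in \FF_\ell^\times$. The four roots of $\det(I - \rho_\ell(\sigma)T)$ thus collapse to $\varepsilon\lambda$ and $\varepsilon\lambda^{-1}$, each appearing with multiplicity $2$.

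Finally, the hypothesis $\vartheta_\ell(\sigma) \in \bbar{B}$ says that $A$ stabilises a line in $\FF_\ell^2$, so $A$ has a rational eigenvector whose eigenvalue $\varphi(A)$ lies in $\{\lambda,\lambda^{-1}\}\cap \FF_\ell^\times$; in particular $\lambda \in \FF_\ell^\times$ and the four roots displayed above all lie in $\FF_\ell^\times$. By the very definition of $\alpha$, the value $\alpha(\sigma) = \bbar{\varphi}(\vartheta_\ell(\sigma))$ is the class of $\varphi(A)$ in $\FF_\ell^\times/\{\pm 1\}$, which equals the class of $a := \varepsilon\,\varphi(A)$; and $a$ is one of the roots already identified. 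The proof is essentially bookkeeping, with the only real input being the structural statement $B = \pm(\text{unipotent})$ coming from Lemma~\ref{L:need to switch}.
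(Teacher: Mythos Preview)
Your proof is correct and follows essentially the same route as the paper. The only difference is packaging: the paper invokes the second clause of Lemma~\ref{L:need to switch} directly (after adjusting the sign of $A$, one has $\det(I-\rho_\ell(\sigma)T)=(1-\tr(A)T+T^2)^2$, so the eigenvalue $\varphi(A)$ of $A$ is visibly a root), whereas you cite the first clause of that lemma and redo the eigenvalue computation for $A\otimes B$ by hand. Both arguments rest on the same structural input, namely that $B$ is $\pm(\text{unipotent})$.
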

\begin{proof}
Choose a matrix $A\in \SL_2(\FF_\ell)$ whose image in $\PSL_2(\FF_\ell)$ is $\vartheta_\ell(\sigma)$.   By Lemma~\ref{L:need to switch}, after possibly replacing $A$ by $-A$, we have $\det(I-\rho_\ell(\sigma) T) = (1 -\tr(A) T + T^2)^2$.   Since $A$ belong to $B$, $\varphi(A)$ is a root of $\det(I-\rho_\ell(\sigma) T) = (1 -\tr(A) T + T^2)^2$.  So $\varphi(A)\in \FF_\ell^\times$ is a representative of $\bbar\varphi(\vartheta_\ell(\sigma))=\alpha(\sigma)$ and a root of $\det(I-\rho_\ell(\sigma) T)$.
\end{proof}

\begin{lemma} \label{L:gamma}
There is an integer $e\in \{-1,0,1\}$ such that the character $\gamma \colon \Gal_\QQ \to \FF_\ell^\times/\{\pm 1\}$ given by $\sigma\mapsto \alpha(\sigma) \cdot \chi_\ell(\sigma)^{-e}$ is unramified at all odd primes.
\end{lemma}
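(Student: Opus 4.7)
The plan is to reduce the lemma to proving $\alpha|_\calI \equiv \chi_\ell^e|_\calI \pmod{\{\pm 1\}}$ for some $e\in\{-1,0,1\}$.  Granting this, $\gamma := \alpha\cdot \chi_\ell^{-e}$ is trivial on $\calI$, and is unramified at primes $p\nmid 2\ell$ because both $\rho_\ell$ and $\chi_\ell$ are; hence $\gamma$ is unramified at every odd prime.

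The first step is to describe the eigenvalues of $\rho_\ell(\sigma)$ for $\sigma\in\calI$.  Since $\ell$ is unramified in $\QQ(i)$, we have $\calI\subseteq \Gal_{\QQ(i)}$, so the setup of \S\ref{SS:psi} applies.  Write $\widetilde\rho_\ell(\sigma) = A\otimes B$ with $A,B\in\SL_2(\FF_\ell)$.  The image of $B$ in $\PSL_2(\FF_\ell)$ lies in $\vartheta_\ell'(\Gal_{\QQ(i)})$, which has cardinality $1$ or $\ell$ by Lemma~\ref{L:need to switch}; so $B^{2\ell}=I$, and since $\FFbar_\ell$ contains no nontrivial $\ell$-th roots of unity, both eigenvalues of $B$ equal the same sign $\pm 1$.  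Writing $\{\lambda_1,\lambda_1^{-1}\}$ for the eigenvalues of $A$, where $\lambda_1=\varphi(A)\in\FF_\ell^\times$ by construction of $\varphi$, the four eigenvalues of $\rho_\ell(\sigma)=A\otimes B$ are $\pm\lambda_1^{\pm 1}$, all in $\FF_\ell^\times$; moreover $\alpha(\sigma)\equiv\lambda_1\pmod{\{\pm 1\}}$ by the proof of Lemma~\ref{L:root lift}.

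The second step invokes the tame inertia weight bound.  By Proposition~\ref{P:Caruso} and Lemma~\ref{L:X connection}, exactly as in the proof of Lemma~\ref{L:V semisimplification for inertia}(\ref{I:V semisimplification for inertia d}), every $1$-dimensional $\FF_\ell[\calI]$-composition factor of $V_\ell^\sss$ has character $\chi_\ell^f|_\calI$ with $f\in\{-1,0,1\}$.  The $2$-dimensional components described in Lemma~\ref{L:V semisimplification for inertia}(\ref{I:V semisimplification for inertia e}) cannot appear, because their characters take some values in $\FF_{\ell^2}^\times\setminus \FF_\ell^\times$, whereas the previous paragraph showed that every eigenvalue of $\rho_\ell(\sigma)$ lies in $\FF_\ell^\times$ for every $\sigma\in\calI$.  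Therefore $\lambda_1 = \pm\chi_\ell(\sigma)^{e(\sigma)}$ for some $e(\sigma)\in\{-1,0,1\}$, whence $\alpha(\sigma)\equiv \chi_\ell(\sigma)^{e(\sigma)}\pmod{\{\pm 1\}}$.  Since the wild inertia $\calP$ is pro-$\ell$, its image in $\GL_4(\FF_\ell)$ is a unipotent subgroup, so all eigenvalues of $\rho_\ell(\sigma)$ for $\sigma\in\calP$ equal $1$ and $\alpha|_\calP$ is trivial; thus $\alpha|_\calI$ factors through the pro-cyclic tame quotient $\calI_t$.

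The final step pins down the exponent.  Because $\chi_\ell|_{\calI_t}$ surjects onto $\FF_\ell^\times$ via the level-$1$ fundamental character, any continuous character $\calI_t\to \FF_\ell^\times/\{\pm 1\}$ is of the form $\chi_\ell^k\pmod{\{\pm 1\}}$ for a unique $k\in\ZZ/((\ell-1)/2)$.  Choosing $\sigma_0\in\calI$ with $\chi_\ell(\sigma_0)$ generating $\FF_\ell^\times$ and comparing with $\alpha(\sigma_0)\equiv \chi_\ell(\sigma_0)^{e(\sigma_0)}\pmod{\{\pm 1\}}$ forces $k\equiv e(\sigma_0)\pmod{(\ell-1)/2}$.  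Since $(\ell-1)/2\geq 5$ when $\ell\geq 11$, the three integers $\{-1,0,1\}$ are pairwise distinct modulo $(\ell-1)/2$, so $k$ admits a unique representative $e\in\{-1,0,1\}$.  The critical step, and the main place the Borel hypothesis enters, is the exclusion of the $2$-dimensional inertia components; without it, $\alpha(\sigma)$ would typically take values in $\FF_{\ell^2}^\times$ rather than $\FF_\ell^\times$, and no such $e$ would exist.
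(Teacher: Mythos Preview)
Your proof is correct and follows the same strategy as the paper: show that a representative of $\alpha$ at a generator of tame inertia is an $\FF_\ell$-rational eigenvalue of $\rho_\ell$, then identify that eigenvalue as $\chi_\ell^e$ via the tame inertia weight classification (Lemma~\ref{L:V semisimplification for inertia}).  The paper's version is terser: it fixes a single $\tau$ topologically generating $\calI_t$, cites Lemma~\ref{L:root lift} to get a rational eigenvalue $a$, and then immediately applies Lemma~\ref{L:V semisimplification for inertia}(\ref{I:V semisimplification for inertia d}) to conclude $a=\chi_\ell(\tau)^e$.  Your argument is more explicit on the one point the paper leaves implicit, namely why $a$ must come from a $1$-dimensional rather than a $2$-dimensional $\FF_\ell[\calI]$-composition factor: you use the Borel hypothesis (via the tensor decomposition $A\otimes B$ with $B$ having eigenvalues $\pm 1$) to show that \emph{every} eigenvalue of $\rho_\ell(\sigma)$ for $\sigma\in\calI$ lies in $\FF_\ell^\times$, which globally excludes the $2$-dimensional factors of Lemma~\ref{L:V semisimplification for inertia}(\ref{I:V semisimplification for inertia e}).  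This is a useful clarification, though strictly speaking one only needs the weaker fact that the particular eigenvalue $a\in\FF_\ell^\times$ cannot arise from a $2$-dimensional factor evaluated at a generator of $\calI_t$ (whose action there has order $\ell+1$ and hence eigenvalues outside $\FF_\ell$).
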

\begin{proof}
Fix a $\tau\in \calI$ whose image topologically generates $\calI_t$.     By Lemma~\ref{L:root lift}, there is a representative $a\in \FF_\ell^\times$ of $\alpha(\tau)$ which is a root of $\det(I-\rho_\ell(\tau)T)$.  So there is a one-dimensional subspace $\calW$ of $V_\ell$ which is stable under the action of $\calI$, and $\tau$ acts on $\calW$ as multiplication by $a$.   By Lemma~\ref{L:V semisimplification for inertia}(\ref{I:V semisimplification for inertia d}), $a=\chi_\ell(\tau)^e$ for some $e\in \{-1,0,1\}$.   Define the character $\gamma\colon \Gal_\QQ \to \FF_\ell^\times/\{\pm 1\}$ by $\sigma\mapsto \alpha(\sigma) \chi_\ell(\sigma)^{-e}$.    We have $\gamma(\tau)=1$, so $\gamma(\calI)=\gamma(\calI_t)=1$.   Therefore, $\gamma$ is unramified at $\ell$.  The character $\gamma$ is unramified at the primes $p\nmid 2\ell$ since $\rho_\ell$ and $\chi_\ell$ are unramified at such primes.
\end{proof}

\begin{lemma} \label{L:borel contradictions}
For each prime $p\nmid 2\ell$, we have $P_p^{(4)}(\epsilon p^{4e})\equiv 0 \pmod{\ell}$ for some $\epsilon \in \{\pm 1\}$ and $e\in\{0,1\}$.  
\end{lemma}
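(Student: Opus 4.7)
My plan is to apply Lemma~\ref{L:root lift} to $\sigma = \Frob_p^4 \in \Gal_{\QQ(i)}$ and leverage a strong constraint on the character $\gamma$, namely $\gamma^4 = 1$. Granting this for the moment, the lemma produces a root $a \in \FF_\ell^\times$ of $\det(I-\rho_\ell(\Frob_p^4)T) \equiv P_p^{(4)}(T) \pmod{\ell}$ (using Lemma~\ref{L:compatibility} applied to the $4$th power) whose image in $\FF_\ell^\times/\{\pm 1\}$ is $\alpha(\Frob_p)^4 = \gamma(\Frob_p)^4 \cdot p^{4e}$, and the triviality of $\gamma^4$ then gives $a = \epsilon \, p^{4e}$ for some $\epsilon \in \{\pm 1\}$. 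If $e \in \{0,1\}$ this is the desired conclusion. If $e = -1$, the functional equation $T^4 P_p(1/T) = P_p(T)$ of Lemma~\ref{L:deg 4} implies that the roots of $P_p^{(4)}$ come in inverse pairs, so $\epsilon p^{4} = (\epsilon p^{-4})^{-1}$ is also a root, and we take $e' = 1$.

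The main task is thus to prove $\gamma^4 = 1$. I would first show $\alpha^{12}|_{\calI_2} = 1$ using Proposition~\ref{P:explicit unipotent}. Since $\vartheta_\ell(\Gal_\QQ) \subseteq \bbar{B}$, for $g \in \calI_2$ we can lift $\vartheta_\ell(g)$ to an upper triangular matrix $A_g \in B \subseteq \SL_2(\FF_\ell)$ with eigenvalues $a_g, a_g^{-1} \in \FF_\ell^\times$, so that $\alpha(g) = [a_g]$. Via the isomorphism $\psi_\ell$ of Lemma~\ref{L:new psi}, $\rho_\ell(g)$ corresponds to a tensor $A_g \otimes B_g$ with $B_g \in \SL_2(\FF_\ell)$ of eigenvalues $b_g, b_g^{-1}$, so the eigenvalues of $\rho_\ell(g)$ are the four products $a_g^{\pm 1} b_g^{\pm 1}$. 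Proposition~\ref{P:explicit unipotent} gives that $\rho_\ell(g)^{12}$ is unipotent, so $(a_g b_g)^{12} = (a_g b_g^{-1})^{12} = 1$; these force $a_g^{24} = 1$, hence $a_g^{12} \in \{\pm 1\}$ and $\alpha(g)^{12} = 1$.

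Now $\chi_\ell$ is unramified at $2$ (as $\ell \neq 2$), so $\chi_\ell|_{\calI_2} = 1$ and therefore $\gamma^{12}|_{\calI_2} = \alpha^{12}|_{\calI_2} = 1$. Combined with Lemma~\ref{L:gamma}'s assertion that $\gamma$ is unramified at every odd prime, $\gamma^{12}$ is everywhere unramified and so trivial, since $\QQ$ admits no non-trivial everywhere-unramified abelian extensions. On the other hand, $\gamma$, being unramified outside $2$, factors through $\Gal(\QQ(\mu_{2^\infty})/\QQ) \cong \ZZ_2^\times$, a pro-$2$ group, so its cyclic image in $\FF_\ell^\times/\{\pm 1\}$ has $2$-power order. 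An order dividing both $12$ and a power of $2$ must divide $4$, so $\gamma^4 = 1$.

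The principal obstacle is recognizing that the correct invariant to track is $\gamma^4$ (since $\gamma$ itself could a priori have arbitrarily large $2$-power order) and then extracting the bound $\alpha^{12}|_{\calI_2} = 1$ from Proposition~\ref{P:explicit unipotent} by exploiting the tensor-product description of $\rho_\ell$ in the Borel case; once these are in hand, the conclusion follows from Lemma~\ref{L:root lift} at $\Frob_p^4$ together with the functional equation.
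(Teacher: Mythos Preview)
Your proof is correct and follows essentially the same route as the paper: establish $\gamma^4=1$, apply Lemma~\ref{L:root lift} at $\Frob_p^4$, and use the functional equation to reduce $e=-1$ to $e'=1$. The only notable difference is in the passage from $\gamma^{12}=1$ to $\gamma^4=1$: the paper argues that a factor of $3$ in the order of $\gamma$ would produce a cubic abelian extension of $\QQ$ unramified outside~$2$, which does not exist, whereas you observe that $\gamma$ factors through the pro-$2$ group $\Gal(\QQ(\mu_{2^\infty})/\QQ)\cong\ZZ_2^\times$, forcing $2$-power order. These are two phrasings of the same class-field-theoretic fact. Your eigenvalue computation $a_g^{24}=1$ from $(a_gb_g)^{12}=(a_gb_g^{-1})^{12}=1$ is also a welcome unpacking of what the paper leaves implicit when it invokes Proposition~\ref{P:explicit unipotent}.
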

\begin{proof}
 Take $e$ and $\gamma$ as in Lemma~\ref{L:gamma}.   Since the image of $\gamma$ lies in an abelian group, we find that $\gamma(\calI_2)$ does not depend on the choice of $\calI_2$.  
 We claim that $\gamma(\calI_2)=\gamma(\Gal_\QQ)$.  If $\gamma(\calI_2)$ is a proper subgroup of $\gamma(\Gal_\QQ)$, then it gives rise to a non-trivial extension of $\QQ$ that is unramified at all primes.  The claim follows since $\QQ$ has no such extension.

The group $(\FF_\ell^\times)^2/\{\pm 1\}$ is cyclic, so $\gamma(\Gal_\QQ)=\gamma(\calI_2)$ is cyclic of some order $m$.   Proposition~\ref{P:explicit unipotent} implies that $m$ divides $12$.    We claim that the cardinality of $\gamma(\Gal_\QQ)$ divides $4$.   If $3$ divides $|\gamma(\Gal_\QQ)|$, then $\gamma$ gives rise to a cubic abelian extension of $\QQ$ that is unramified outside of $2$.  However, by class field theory no such cubic extensions exist, so the claim follows.

Take any $\sigma\in \Gal_\QQ$.   We have $\gamma^4=1$, so $\chi_\ell(\sigma)^{4e}$ is a representative of $\alpha(\sigma)^4= \alpha(\sigma^4)$.  By Lemma~\ref{L:root lift}, $\epsilon \chi_\ell(\sigma)^{4e}$ is a root of $\det(I-\rho_\ell(\sigma)^4 T)$ for some $\epsilon\in \{\pm 1\}$.     Since $\rho_\ell(\sigma) \in \SO(V_\ell)$ by Lemma~\ref{L:SO image}, we find that $\chi_\ell(\sigma)^{-4e}$ is also a root of $\det(I-\rho_\ell(\sigma)^4 T)$.

 Now take any prime $p\nmid 2\ell$.   We have $\chi_\ell(\Frob_p)\equiv p \pmod{\ell}$ and $\det(I-\rho_\ell(\Frob_p)^4 T) \equiv P_p^{(4)}(T) \pmod{\ell}$.   Therefore, $P_p^{(4)}(\epsilon p^{4e'}) \equiv 0 \pmod{\ell}$ where $e'=|e|$.
\end{proof}

Using (\ref{E:P4 explicit}), we find that 
$P_3^{(4)}(1)= 2^{12} 5^2/3^8$, $P_3^{(4)}(-1)= 2^4/3^8$, $P_3^{(4)}(3^4)= 2^{12} 3^2 5^2 7^2$,  $P_3^{(4)}(-3^4)= 2^4 1601^2$,
$P_5^{(4)}(1)= 2^{14} 3^2/5^8$, $P_5^{(4)}(-1)= 2^4 23^4/5^8$, $P_5^{(4)}(5^4)= 2^{14} 3^2 5^2 7^2 29^2$ and  $P_5^{(4)}(-5^4)= 2^4 97^2 1009^2$.   By Lemma~\ref{L:borel contradictions} with $p\in\{3,5\}$ and our ongoing assumption $\ell\geq 11$, we obtain a contradiction.

Therefore, $\vartheta_\ell(\Gal_\QQ)$ is not contained in a Borel subgroup of $\PSL_2(\FF_\ell)$.

\subsection{Cartan case} \label{SS:Cartan}

We now suppose that $\vartheta_\ell(\Gal_\QQ)$ is contained in the normalizer of a Cartan subgroup $C$ of $\PSL_2(\FF_\ell)$.  

\begin{lemma} \label{L:no normalizers}
We have $\vartheta_\ell(\Gal_\QQ) \subseteq C$ and the group $\vartheta_\ell(\calI)$ is either $1$ or $C$.  
\end{lemma}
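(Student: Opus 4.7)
The plan is to introduce the sign character $\alpha := \bar{\vartheta_\ell} \colon \Gal_\QQ \to N/C \cong \{\pm 1\}$ and prove $\alpha = 1$ (which gives $\vartheta_\ell(\Gal_\QQ) \subseteq C$); the assertion on $\vartheta_\ell(\calI)$ will fall out of the inertia analysis used to show $\alpha|_\calI = 1$. A structural remark I would record up front: since $\ell \geq 11$, the group $N$ is dihedral of order $\ell \mp 1 \geq 10$, so $C$ is its unique cyclic subgroup of order $|C|$; in particular, every Cartan subgroup of $\PSL_2(\FF_\ell)$ that sits inside $N$ must equal $C$.

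First I would pin down $\vartheta_\ell(\calI)$. Because $|N|$ is prime to $\ell$, the wild inertia image $\vartheta_\ell(\calP)$ is trivial, so by Proposition~\ref{P:ramification at ell}(\ref{P:ramification at ell c}) the group $\vartheta_\ell(\calI)$ is a cyclic quotient of a cyclic group of order $1$, $\ell-1$, or $\ell+1$. To identify it concretely I would unpack the tensor structure $\psi_\ell\colon \Omega(V_\ell) \isomto \SL_2(\FF_\ell) \otimes \SL_2(\FF_\ell)$: for $g\in \calI$, write $\widetilde\rho_\ell(g) = A\otimes B$, so the eigenvalues of $\rho_\ell(g)$ on $V_\ell^\sss$ are $\{\lambda^{\pm 1}\mu^{\pm 1}\}$. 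Matching these against the eigenvalue data supplied by Lemma~\ref{L:V semisimplification for inertia} (which, in every case, gives eigenvalues of the form $\{\nu, \nu, \nu^{-1}, \nu^{-1}\}$ with $\nu$ a suitable power of a fundamental character) forces either $A = \pm I$ or $B = \pm I$. Lemma~\ref{L:need to switch} (applied via $\calI \subseteq \Gal_{\QQ(i)}$) requires $\vartheta_\ell'(\calI)$ to have order $1$ or $\ell$, and the latter is already excluded, so we are in the case $B = \pm I$ with $\vartheta_\ell'(\calI) = 1$. Combined with the four subcases of Proposition~\ref{P:ramification at ell}, this shows $\vartheta_\ell(\calI)$ is either trivial or a full Cartan subgroup of $\PSL_2(\FF_\ell)$ (split of order $(\ell-1)/2$ or non-split of order $(\ell+1)/2$). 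Since $\vartheta_\ell(\calI) \subseteq N$ and $N$ is dihedral of order $\ell \mp 1$, the only Cartan of $\PSL_2(\FF_\ell)$ fitting inside $N$ is $C$ itself, so $\vartheta_\ell(\calI) \in \{1, C\}$. This establishes the second assertion and also gives $\alpha|_\calI = 1$.

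Since $\rho_\ell$ is unramified outside $2\ell$, so is $\alpha$; combined with $\alpha|_\calI = 1$, the character $\alpha$ is unramified outside $\{2, \infty\}$. Its fixed field is therefore one of $\QQ$, $\QQ(i)$, $\QQ(\sqrt{2})$, $\QQ(\sqrt{-2})$. To eliminate the three nontrivial candidates I would evaluate $\alpha$ at $\Frob_3$ and $\Frob_5$ using Lemma~\ref{L:explicit}. For $p=3$: since $\Frob_3 \notin \Gal_{\QQ(i)}$, Lemma~\ref{L:other coset} gives $\tr \vartheta_\ell(\Frob_3) = \pm b$ with $P_3(T) \equiv 1 + (b^2 - 2)T^2 + T^4 \pmod{\ell}$; if $\alpha(\Frob_3) = -1$, then $\vartheta_\ell(\Frob_3) \in N - C$ has trace $0$, forcing $b \equiv 0$ and hence $-2/9 \equiv -2 \pmod{\ell}$, i.e. $\ell \mid 16$, a contradiction. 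Thus $\alpha(\Frob_3) = 1$, ruling out the cases $\QQ(i)$ and $\QQ(\sqrt{2})$ (in both of which $\Frob_3$ is nontrivial). For the last candidate $\QQ(\sqrt{-2})$: since $-2$ is not a square modulo $5$ one would have $\alpha(\Frob_5) = -1$, and since $\Frob_5 \in \Gal_{\QQ(i)}$, Lemma~\ref{L:need to switch} combined with $P_5(T) = (1 - 2/5 \cdot T + T^2)^2$ would force $-2/5 \equiv 0 \pmod{\ell}$, again absurd. Therefore $\alpha = 1$.

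The principal obstacle is the inertia step: to identify $\vartheta_\ell(\calI)$ precisely one has to interleave Proposition~\ref{P:ramification at ell}, Lemma~\ref{L:V semisimplification for inertia}, the tensor-product description of $\Omega(V_\ell)$, and the labeling convention of Lemma~\ref{L:need to switch}. Once that case analysis is complete, the rest is routine bookkeeping with the explicit polynomials $P_3$ and $P_5$.
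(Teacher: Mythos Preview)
Your approach matches the paper's: establish $\vartheta_\ell(\calI)\in\{1,C\}$ via Proposition~\ref{P:ramification at ell} and Lemma~\ref{L:need to switch}, then define the sign character $\Gal_\QQ\to N/C$, show it is unramified outside $2$, and eliminate the three candidate quadratic fields using $P_3$ and $P_5$. Two small slips in your inertia paragraph: the doubled-eigenvalue shape $\{\nu,\nu,\nu^{-1},\nu^{-1}\}$ follows from Lemma~\ref{L:square} (the $(1+bT+T^2)^2$ form), not from Lemma~\ref{L:V semisimplification for inertia}; and the claim that $|\vartheta_\ell'(\calI)|=\ell$ is ``already excluded'' is unjustified and also unnecessary --- the paper only uses $|\vartheta_\ell'(\calI)|\in\{1,\ell\}$, which is coprime to $(\ell\pm1)/2$ and therefore already forces the cyclic subgroup of that order inside the image of $\rho_\ell(\calI)/\rho_\ell(\calP)$ to project isomorphically into $\vartheta_\ell(\calI)$.
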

\begin{proof}
Let $N$ be the normalizer of $C$ in $\PSL_2(\FF_\ell)$.  By Proposition~\ref{P:ramification at ell}(\ref{P:ramification at ell c}), the group $\rho_\ell(\calI)/\rho_\ell(\calP)$ is cyclic of order $1$, $\ell-1$ or $\ell+1$.   If $\rho_\ell(\calI)/\rho_\ell(\calP)=1$, then $\vartheta_\ell(\calI)=1$ since $\ell \nmid |N|$.  

Now assume that $\rho_\ell(\calI)/\rho_\ell(\calP)$ is cyclic of order $\ell-1$ or $\ell+1$.   The image of $\rho_\ell(\calI)$ in $\Omega(V_\ell)/\{\pm I\}$ thus contains a cyclic group of order $(\ell-1)/2$ or $(\ell+1)/2$.   The group $\vartheta_\ell'(\calI)$ is of order $1$ or $\ell$ by Lemma~\ref{L:need to switch} since $\calI \subseteq \Gal_{\QQ(i)}$.   Since $\ell\nmid |N|$, we deduce that $\vartheta_\ell(\calI)$ is a cyclic group containing a subgroup of order $(\ell-1)/2$ or $(\ell+1)/2$.  This implies that $\vartheta_\ell(\calI)$ is a Cartan subgroup of $\PSL_2(\FF_\ell)$.  The group $N$ contains a unique Cartan subgroup, so $\vartheta_\ell(\calI)=C$.

It remains to show that $\vartheta_\ell(\Gal_\QQ) \subseteq C$.  Let $\varepsilon\colon \Gal_\QQ \to \{\pm 1\}$ be the character obtained by composing $\vartheta_\ell\colon \Gal_\QQ \to N$ with the quotient map  $N\to N/C \cong \{\pm 1\}$.  It thus suffices to show that $\varepsilon=1$.    
  
Suppose $p\nmid 2\ell$ is a prime that satisfies $\varepsilon(\Frob_p) = -1$ and hence $\vartheta_\ell(\Frob_p) \in N- C$. Recall that every $g\in N-C$ satisfies $\tr(g)=0$.  By Lemmas~\ref{L:need to switch} and \ref{L:other coset}, the polynomial $\det(I- \rho_\ell(\Frob_p)T) \equiv P_p(T) \pmod{\ell}$ is either $(1+T^2)^2$ or $1-2T^2+T^4=(1-T^2)^2$.   Using the values of $P_3(T)$ and $P_5(T)$ from Lemma~\ref{L:explicit}, this shows that $\varepsilon(\Frob_3)=1$ and $\varepsilon(\Frob_5)=1$.   

The character $\varepsilon$ is unramified at $p\nmid 2\ell$ since $\rho_\ell$ is unramified at such primes.  The character $\varepsilon$ is also unramified at $\ell$ since $\vartheta_\ell(\calI)\subseteq C$.   Let $K$ be the fixed field in $\Qbar$ of $\ker(\varepsilon)$.   The extension $K/\QQ$ is unramified at all odd primes and has degree at most $2$, so $K$ is $\QQ$, $\QQ(i)$, $\QQ(\sqrt{2})$ or $\QQ(\sqrt{-2})$.   The primes $3$ and $5$ split in $K$, which rules out $\QQ(i)$, $\QQ(\sqrt{2})$ and $\QQ(\sqrt{-2})$.   Therefore, $K=\QQ$ and hence $\varepsilon=1$.  
\end{proof}

A split Cartan subgroup of $\PSL_2(\FF_\ell)$ lies in a Borel subgroup.   So by the case considered in \S\ref{SS:Borel} and Lemma~\ref{L:no normalizers}, we deduce that $C$ is non-split.

\begin{lemma}  \label{L:non-split ramified}
The representation $\vartheta_\ell$ is unramified at $\ell$.
\end{lemma}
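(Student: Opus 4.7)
The plan is to rule out, by class field theory, the remaining possibility from Lemma~\ref{L:no normalizers}, namely that $\vartheta_\ell(\calI) = C$, where $C$ is the non-split Cartan of order $(\ell+1)/2$.

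First I would observe that since $C$ is abelian, $\vartheta_\ell$ factors through $\Gal_\QQ^{\ab}$ and cuts out a finite cyclic extension $K/\QQ$ whose Galois group embeds into $C$. By Lemma~\ref{L:compatibility}, $\rho_\ell$ is unramified at every prime $p \nmid 2\ell$, hence so is $\vartheta_\ell$, so $K$ is unramified outside $\{2, \ell\}$; by Kronecker--Weber, $K \subseteq \QQ(\mu_{2^\infty \ell^\infty})$.

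Under the cyclotomic character identification, the image of an inertia group at $\ell$ inside $\Gal(\QQ(\mu_{2^\infty \ell^\infty})/\QQ)$ is $\ZZ_\ell^\times \cong (1 + \ell\ZZ_\ell) \times \FF_\ell^\times$. Assuming for contradiction that $\vartheta_\ell(\calI) = C$, the coprimality of $(\ell+1)/2$ to $\ell$ forces $\vartheta_\ell|_\calI$ to kill the pro-$\ell$ part $1 + \ell\ZZ_\ell$ and factor through the tame quotient $\FF_\ell^\times$, which is cyclic of order $\ell - 1$. Thus $(\ell+1)/2$ would have to divide $\ell - 1$. But $\ell - 1 \equiv -2 \pmod{(\ell+1)/2}$, so $\gcd((\ell+1)/2, \ell - 1) \in \{1,2\}$, which is strictly less than $(\ell+1)/2 \geq 6$ under our standing hypothesis $\ell \geq 11$. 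This contradiction forces $\vartheta_\ell(\calI) = 1$.

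I anticipate no substantial obstacle: the argument combines the already-established dichotomy of Lemma~\ref{L:no normalizers} with Kronecker--Weber and an elementary gcd computation. The only mild subtlety is keeping track of the fact that $\vartheta_\ell$ genuinely has abelian image (so that Kronecker--Weber applies), which is immediate once one is in the Cartan case.
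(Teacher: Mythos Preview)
Your argument is correct and follows essentially the same route as the paper: both proofs use that $\vartheta_\ell(\Gal_\QQ)\subseteq C$ is abelian, then invoke class field theory to see that $\vartheta_\ell(\calI)$ is a quotient of $\ZZ_\ell^\times\cong \FF_\ell^\times\times\ZZ_\ell$, and finally note that a cyclic group of order $(\ell+1)/2$ cannot arise as such a quotient for $\ell\geq 11$. The only cosmetic difference is that the paper phrases this via local class field theory (a totally ramified abelian extension of $\QQ_\ell$), whereas you go through Kronecker--Weber; your gcd computation is in fact a bit more careful than the paper's passing remark that $(\ell+1)/2$ is ``relatively prime'' to $(\ell-1)\ell^e$ (which is not literally true when $\ell\equiv 3\pmod 4$, though the needed non-divisibility still holds).
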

\begin{proof}
Suppose that $\vartheta_\ell$ is ramified at $\ell$.   By Lemma~\ref{L:no normalizers}, we have $\vartheta_\ell(\Gal_\QQ)=\vartheta_\ell(\calI)=C$. So $\vartheta_\ell$ gives rise to an abelian extension $K/\QQ$ of degree $(\ell+1)/2$ that is totally ramified at $\ell$.   There is thus a totally ramified abelian extension $K'/\QQ_\ell$ of degree $(\ell+1)/2$.   By local class field theory, $\Gal(K'/\QQ_\ell)$ must be a quotient of $\ZZ_\ell^\times\cong \FF_\ell^\times \times \ZZ_\ell$.   However, this is impossible since $(\ell+1)/2>1$ is relatively to the integers $(\ell-1)\ell^e$.  
\end{proof}

\begin{lemma}  \label{L:non-split contradiction}
For each prime $p\nmid 2\ell$, the polynomial $P_p^{(4)}(T)$ is congruent modulo $\ell$ to $(1-T)^4$ or $(1+T)^4$.
\end{lemma}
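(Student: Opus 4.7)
The plan is to combine Lemma~\ref{L:non-split ramified} with Proposition~\ref{P:explicit unipotent} to force the order of $\vartheta_\ell(\Gal_\QQ)$ to divide $4$, and then to read off $\det(I-\rho_\ell(\Frob_p)^4 T)\pmod\ell$ using the tensor decomposition $\psi_\ell(\rho_\ell(\Frob_p))=A\otimes B$ together with the case analysis of $\vartheta_\ell'(\Frob_p)$ supplied by Lemmas~\ref{L:need to switch} and~\ref{L:other coset}.

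First I would pin down $\vartheta_\ell$ globally. It is unramified outside $2\ell$ by compatibility of $\rho_\ell$ and unramified at $\ell$ by Lemma~\ref{L:non-split ramified}. Its image lies in the cyclic group $C$, so the fixed field of $\ker\vartheta_\ell$ is an abelian extension of $\QQ$ unramified outside $2$, hence contained in $\QQ(\zeta_{2^\infty})$ by Kronecker--Weber. Since $\Gal(\QQ(\zeta_{2^\infty})/\QQ)\cong \ZZ_2^\times$ is pro-$2$ with inertia at $2$ surjecting, one obtains $\vartheta_\ell(\Gal_\QQ)=\vartheta_\ell(\calI_2)$, and this image is cyclic of $2$-power order.

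Next I would apply Proposition~\ref{P:explicit unipotent}: for every $g\in\calI_2$, $\rho_\ell(g)^{12}$ is unipotent in $\Omega(V_\ell)$. A direct eigenvalue check on $\SL_2(\FF_\ell)\otimes\SL_2(\FF_\ell)$ shows that if $A\otimes B$ is unipotent then both $A$ and $B$ are $\pm I$ times a unipotent, so each of their images in $\PSL_2(\FF_\ell)$ has order dividing $\ell$. In particular $\vartheta_\ell(g)^{12}$ has order $1$ or $\ell$; but by the previous paragraph it has $2$-power order, so it equals the identity. Hence the cyclic group $\vartheta_\ell(\Gal_\QQ)=\vartheta_\ell(\calI_2)$ has exponent dividing $\gcd(12,2^\infty)=4$, and so its order divides $4$.

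Finally, for $p\nmid 2\ell$, I would write $\psi_\ell(\rho_\ell(\Frob_p))=A\otimes B$. The image of $A$ in $\PSL_2(\FF_\ell)$ has order dividing $4$, so $A^4=\epsilon I$ for some $\epsilon\in\{\pm 1\}$. For $B$: if $p\equiv 1\pmod 4$, Lemma~\ref{L:need to switch} gives that $\vartheta_\ell'(\Frob_p)$ has order $1$ or $\ell$, and $B$ equals $\pm I$ or $\pm U$ with $U$ unipotent; if $p\equiv 3\pmod 4$, the proof of Lemma~\ref{L:other coset} gives $B^2=-I$. Either way the eigenvalues of $B^4$ are both $1$. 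Consequently every eigenvalue of $\rho_\ell(\Frob_p)^4=A^4\otimes B^4$ equals $\epsilon$, so $\det(I-\rho_\ell(\Frob_p)^4 T)=(1-\epsilon T)^4$, and by the compatibility of $\rho_\ell$ this gives $P_p^{(4)}(T)\equiv (1\mp T)^4\pmod\ell$, as required. The main obstacle is the second step: upgrading the purely local unipotence furnished by Proposition~\ref{P:explicit unipotent} into the global statement that $\vartheta_\ell(\Gal_\QQ)$ has order dividing $4$. What makes this work is precisely the coprimality of the two possible orders ($1$ or $\ell$) coming from unipotence with the $2$-power orders dictated by the Kronecker--Weber reduction.
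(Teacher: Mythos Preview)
Your proof is correct and follows essentially the same strategy as the paper: use Lemma~\ref{L:non-split ramified} and Proposition~\ref{P:explicit unipotent} together with class field theory to force $\vartheta_\ell(\Gal_\QQ)$ to have order dividing $4$, then read off $\det(I-\rho_\ell(\Frob_p)^4 T)$. The only cosmetic differences are that the paper invokes the nonexistence of a cubic abelian extension of $\QQ$ unramified outside $2$ (rather than Kronecker--Weber) to rule out order $3$, and in the last step simply applies Lemma~\ref{L:need to switch} to $\sigma^4\in\Gal_{\QQ(i)}$ rather than splitting into cases on $p\bmod 4$.
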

\begin{proof}
Since the image of $\vartheta_\ell$ lies in the cyclic group $C$, we find that $\vartheta_\ell(\calI_2)$ does not depend on the choice of $\calI_2$.   We claim that $\vartheta_\ell(\calI_2)=\vartheta_\ell(\Gal_\QQ)$.  If $\vartheta_\ell(\calI_2)$ is a proper subgroup of $\vartheta_\ell(\Gal_\QQ)$, then it gives rise to a non-trivial extension of $\QQ$ that is unramified at all primes (this uses Lemma~\ref{L:non-split ramified}).  The claim follows since $\QQ$ has no such extension.

The group $C$ is cyclic, so $\vartheta_\ell(\Gal_\QQ)=\vartheta_\ell(\calI_2)$ is cyclic of some order $m$.   Proposition~\ref{P:explicit unipotent} implies that $m$ divides $12$.    We claim that the cardinality of $\vartheta_\ell(\Gal_\QQ)$ divides $4$.   If $3$ divides $|\vartheta_\ell(\Gal_\QQ)|$, then $\vartheta_\ell$ gives rise to a cubic abelian extension of $\QQ$ that is unramified outside of $2$.  However, by class field theory no such cubic extension exist, so the claim follows.

Take any $\sigma\in \Gal_\QQ$.   We have $\vartheta_\ell(\sigma^4)=\vartheta_\ell(\sigma)^4=I$, so $\det(I-\rho_\ell(\sigma)^4 T)$ is $(1+2T+T^2)^2=(1+T)^4$ or $(1-2T+T^2)^2=(1-T)^4$ by Lemma~\ref{L:need to switch}.   The lemma follows since $P_p^{(4)}(T)\equiv \det(I-\rho_\ell(\Frob_p)^4T)  \pmod{\ell}$ for every prime $p\nmid 2\ell$.  
\end{proof}

From (\ref{E:P4 explicit}), we have $P_3^{(4)}(1)= 2^{12} 5^2/3^8$ and $P_3^{(4)}(-1)= 2^4/3^8$.    Since $\ell\geq  11$, we have $P_3^{(4)}(1)\not\equiv 0 \pmod{\ell}$ and $P_3^{(4)}(-1)\not\equiv 0 \pmod{\ell}$.   However, this contradicts Lemma~\ref{L:non-split contradiction} with $p=3$.    Therefore, $\vartheta_\ell(\Gal_\QQ)$ is not contained in the normalizer of a Cartan subgroup of $\PSL_2(\FF_\ell)$.

\subsection{Exceptional case} \label{SS:exceptional}

Assume that $\vartheta_\ell(\Gal_\QQ)$ is contained in a subgroup $M$ of $\PSL_2(\FF_\ell)$ that is isomorphic to $\mathfrak{A}_4$, $\mathfrak{S}_4$ or $\mathfrak{A}_5$.  As observed in \cite{MR0387283}*{\S2.6}, for every $g\in M$, $u:=\tr(g)^2 \in \FF_\ell^\times$ is an element of  $\{0,1,2,4\}$ or satisfies $u^2-3u+1=0$.    For each prime $p\nmid 2\ell$, define $u_p:= \tr(\vartheta_\ell(\Frob_p))^2\in \FF_\ell$.   We thus have $u_p\in \{0,1,2,4\}$ or $u_p^2-3u_p+1=0$. 

By Lemmas~\ref{L:explicit} and \ref{L:need to switch}, we have $u_3 = 16/9$.   So $u_3 = 2^4/3^2$, $u_3-1 = 7/3^2$, $u_3 -2 = -2/3^2$, $u_3-4 = -2^2 5/3^2$ and $u_3^2-3u_3+1 = - 5\cdot 19/3^4$.   Since $\ell \geq 11$, the prime $\ell$ must be $19$. 

By Lemmas~\ref{L:explicit} and \ref{L:other coset}, we have $u_5= 4/25$.  So $u_5= 2^2/5^2$, $u_5-1 = -3 \cdot 7 /5^2$, $u_5-2 = -2 \cdot 23/ 5^2$, $u_5 -4 = -2^5 3/5^2$ and $u_5^2-3u_5+1 = 11\cdot 31/5^4$.  However, since $\ell=19$, this contradicts that $u_5\in \{0,1,2,4\}$ or $u_5^2-3u_5+1=0$.   

Therefore, $\vartheta_\ell(\Gal_\QQ)$ is not contained in a subgroup of $\PSL_2(\FF_\ell)$ which is isomorphic to $\mathfrak{A}_4$, $\mathfrak{S}_4$ or $\mathfrak{A}_5$.

\subsection{Proof of Theorem~\ref{T:main 2}}

Fix a prime $\ell\geq 11$.   Since $\vartheta_\ell(\Gal_\QQ)$ is a quotient of $\rho_\ell(\Gal_\QQ)$, it thus suffices to prove that $\vartheta_\ell(\Gal_\QQ)=\PSL_2(\FF_\ell)$.

\begin{lemma} \label{L:surjectivity 1}
The representation $\vartheta_\ell \colon \Gal_\QQ \to \PSL_2(\FF_\ell)$ is surjective.
\end{lemma}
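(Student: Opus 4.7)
The plan is to combine the three case analyses in \S\ref{SS:Borel}, \S\ref{SS:Cartan} and \S\ref{SS:exceptional} with the classification of maximal subgroups recalled at the start of \S\ref{S:main proof}. If $\vartheta_\ell(\Gal_\QQ)$ were a proper subgroup of $\PSL_2(\FF_\ell)$, then it would be contained in some maximal subgroup $M$, and $M$ would be of one of the three listed types: a Borel subgroup, the normalizer of a (split or non-split) Cartan subgroup, or a subgroup isomorphic to $\mathfrak{A}_4$, $\mathfrak{S}_4$ or $\mathfrak{A}_5$.

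I would then simply invoke each of the three subsections in turn. The Borel case was ruled out in \S\ref{SS:Borel} using Lemma~\ref{L:borel contradictions} applied to $p=3$ and $p=5$ together with the explicit values of $P_3^{(4)}$ and $P_5^{(4)}$ computed from (\ref{E:P4 explicit}). The normalizer-of-Cartan case was ruled out in \S\ref{SS:Cartan}: the split subcase reduces to the Borel case, and the non-split subcase is excluded via Lemma~\ref{L:non-split contradiction} evaluated at $p=3$. Finally the exceptional case $M\cong \mathfrak{A}_4,\mathfrak{S}_4,\mathfrak{A}_5$ was excluded in \S\ref{SS:exceptional} by examining the quantities $u_3$ and $u_5$ coming from Lemmas~\ref{L:explicit}, \ref{L:need to switch} and \ref{L:other coset}: the value of $u_3$ forces $\ell=19$, but then the constraints on $u_5$ are violated.

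Thus no maximal subgroup of $\PSL_2(\FF_\ell)$ contains $\vartheta_\ell(\Gal_\QQ)$, so $\vartheta_\ell(\Gal_\QQ)=\PSL_2(\FF_\ell)$. There is no real obstacle left at this point — all the genuine work is in the preparatory case analyses; the lemma itself is only a bookkeeping step that assembles those three exclusions with the classification of maximal subgroups. I would therefore write the proof as a short paragraph that enumerates the three possibilities for a putative maximal overgroup and cites the relevant subsection for each.
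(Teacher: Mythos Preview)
Your proposal is correct and mirrors the paper's own proof essentially verbatim: assume $\vartheta_\ell$ is not surjective, place its image inside a maximal subgroup $M$, and then invoke \S\ref{SS:Borel}, \S\ref{SS:Cartan}, \S\ref{SS:exceptional} together with the classification of maximal subgroups stated at the beginning of \S\ref{S:main proof} to reach a contradiction. The paper's version is just a terser rendition of exactly what you wrote.
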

\begin{proof}
If $\vartheta_\ell$ is not surjective, then its image lies in a maximal subgroup $M$ of $\PSL_2(\FF_\ell)$.    From \S\S\ref{SS:Borel}--\ref{SS:exceptional}, we find that $M$ is not a Borel subgroup, not the normalizer of a Cartan subgroup, and not isomorphic to $\mathfrak{A}_4$, $\mathfrak{S}_4$ and $\mathfrak{A}_5$.    This contradicts the classification of maximal subgroups of $\PSL_2(\FF_\ell)$ described at the beginning of \S\ref{S:main proof}.  Therefore, $\vartheta_\ell$ is surjective.
\end{proof}

\subsection{Proof of Theorem~\ref{T:main}}

The theorem is an immediate consequence of Theorem~\ref{T:main 2} for primes $p\geq 11$.  The groups $\PSL_2(\FF_5)$ and $\PSL_2(\FF_7)$ are both known to occur as the Galois group of an extension of $\QQ$; for example, this follows from the results of Shih mentioned in the introduction (or more concretely one can just write down polynomials with these Galois groups).   

\begin{remark}
We can actually show that for each prime $\ell\geq 5$, there is a Galois extension $K/\QQ$ which is unramified away from $2$ and $\ell$ such that $\Gal(K/\QQ)\cong \PSL_2(\FF_\ell)$.   For $\ell \geq 11$, this is clear from Theorem~\ref{T:main 2} since $\rho_\ell$ is unramified away from $2$ and $\ell$.   One can show that the polynomial $x^5+20x-16$ has discriminant $2^{16} 5^6$ and Galois group isomorphic to $\PSL_2(\FF_5)$.   
One can show that the polynomial $x^7-7x^5-14x^4-7x^3-7x+2$ has discriminant $2^{20} 7^8$ and Galois group isomorphic to $\PSL_2(\FF_7)$.   
\end{remark}


\subsection{Proof of Theorem~\ref{T:main 3}}
Theorem~\ref{T:main 3} is an easy consequence of Theorem~\ref{T:main 2} and Lemma~\ref{L:X connection}.  

\section{The image of $\rho_\ell$} \label{S:image}

Let $\calH_\ell$ be the subgroup of $\SL_4(\FF_\ell)$ consisting of the matrices $\left(\begin{smallmatrix}A & 0 \\  0 & A\end{smallmatrix}\right)$ with $A\in \SL_2(\FF_\ell)$.   Let $\calG_\ell$ be the subgroup of $\SL_4(\FF_\ell)$ generated by $\calH_\ell$ and the matrix $\gamma:=\left(\begin{smallmatrix} 0 & -I \\  I & 0 \end{smallmatrix}\right)$.  We have $\gamma^2 = -I$ and $\calH_\ell$ commutes with $\gamma$, so $\calH_\ell$ is a normal subgroup of $\calG_\ell$ with index $2$.  

The following describes the groups $\rho_\ell(\Gal_\QQ)$ and $\rho_\ell(\Gal_{\QQ(i)})$ up to conjugation in $\Aut_{\FF_\ell}(V_\ell)$.

\begin{theorem} \label{T:final}
Take any prime $\ell\geq 11$.  There is a representation $\varrho_\ell \colon \Gal_\QQ \to \GL_4(\FF_\ell)$ isomorphic to $\rho_\ell$ such that $\varrho_\ell(\Gal_{\QQ(i)}) = \calH_\ell$ and $\varrho_\ell(\Gal_\QQ) = \calG_\ell$.
\end{theorem}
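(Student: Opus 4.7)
The plan is to combine Proposition~\ref{P:V semisimplification} with the surjectivity of $\vartheta_\ell$ from Lemma~\ref{L:surjectivity 1} to pin down the image of $\rho_\ell$ explicitly. First I would rule out case $n=1$ of Proposition~\ref{P:V semisimplification}: if $n=1$, then $\rho_\ell|_{\Gal_{\QQ(i)}}$ has abelian semi-simplification, so $\rho_\ell(\Gal_{\QQ(i)})$ is an extension of an abelian group by a unipotent (hence $\ell$-)group. The image $\vartheta_\ell(\Gal_{\QQ(i)})$ is then an $\ell$-solvable subgroup of $\PSL_2(\FF_\ell)$, which by Dickson's classification lies in a Borel subgroup. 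But $\vartheta_\ell(\Gal_{\QQ(i)})$ is a normal subgroup of index at most $2$ in $\vartheta_\ell(\Gal_\QQ)=\PSL_2(\FF_\ell)$, and $\PSL_2(\FF_\ell)$ is simple for $\ell\geq 5$, forcing $\vartheta_\ell(\Gal_{\QQ(i)}) = \PSL_2(\FF_\ell)$, a contradiction. So case (iii) holds: $V_\ell\cong W\oplus W$ with $W$ absolutely irreducible of dimension $2$ and self-dual. By Lemma~\ref{L:beta image eq1}, $\beta(\Gal_{\QQ(i)})\subseteq \SL_2(\FF_\ell)$; together with non-splitting of the central extension $\SL_2(\FF_\ell)\to \PSL_2(\FF_\ell)$ (the target has many involutions while the source has only $-I$), the surjectivity of the composition $\Gal_{\QQ(i)}\xrightarrow{\beta}\SL_2(\FF_\ell)\to \PSL_2(\FF_\ell)$ forces $\beta(\Gal_{\QQ(i)}) = \SL_2(\FF_\ell)$.

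Next, choose a basis of $V_\ell$ adapted to the decomposition $W\oplus W$ by duplicating a single basis of $W$ across the two summands. In the resulting basis, $\rho_\ell(\sigma)$ is the block-diagonal matrix $\left(\begin{smallmatrix}\beta(\sigma) & 0 \\ 0 & \beta(\sigma)\end{smallmatrix}\right)\in \calH_\ell$ for $\sigma\in \Gal_{\QQ(i)}$, and surjectivity of $\beta$ gives $\varrho_\ell(\Gal_{\QQ(i)}) = \calH_\ell$. To handle the other coset, pick a complex conjugation $c\in \Gal_\QQ$; then $\rho_\ell(c)^2=I$. Using the identification $\Omega(V_\ell)\cong \SL_2(\FF_\ell)\otimes \SL_2(\FF_\ell)$ of Lemma~\ref{L:new psi} (where, in our basis, $\calH_\ell = \{(A,I):A\in \SL_2(\FF_\ell)\}$), write $\rho_\ell(c)$ as the class of $(A_c,B_c)$; then $A_c^2 = B_c^2 = \pm I$ with the same sign. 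The plus-sign case forces $\rho_\ell(c)\in \{\pm I\}$ and hence $\det(I-\rho_\ell(c)T) = (1\mp T)^4 = 1\mp 4T+6T^2\mp 4T^3+T^4$, contradicting Lemma~\ref{L:other coset} since $4\not\equiv 0\pmod{\ell}$ for $\ell\geq 11$. So $A_c$ and $B_c$ are order-$4$ elements of $\SL_2(\FF_\ell)$.

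Finally, $\rho_\ell(c)\calH_\ell = \{(A_cA,B_c):A\in \SL_2(\FF_\ell)\} = \{(A',B_c):A'\in \SL_2(\FF_\ell)\}$. All order-$4$ elements of $\SL_2(\FF_\ell)$ share the characteristic polynomial $x^2+1$ and thus the same rational canonical form $J:=\left(\begin{smallmatrix}0 & -1 \\ 1 & 0\end{smallmatrix}\right)$, so there exists $M\in \GL_2(\FF_\ell)$ with $MB_cM^{-1}=J$. Conjugating the representation by the element $(I,M)$ of the centralizer $\{I\}\times \GL_2(\FF_\ell)$ of $\calH_\ell$ in $\GL_4(\FF_\ell)$ fixes $\rho_\ell(\sigma) = (\beta(\sigma),I)$ for $\sigma\in \Gal_{\QQ(i)}$ (so the image of $\Gal_{\QQ(i)}$ stays $\calH_\ell$) while transforming $\rho_\ell(c)$ into the class of $(A_c,J)$. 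The resulting $\varrho_\ell$ satisfies $\varrho_\ell(\Gal_\QQ) = \calH_\ell\cup \{(A',J):A'\in \SL_2(\FF_\ell)\} = \calH_\ell\cup \gamma\calH_\ell = \calG_\ell$. The most delicate step is step~$1$: one must carefully translate ``abelian semi-simplification'' into $\ell$-solvability of $\vartheta_\ell(\Gal_{\QQ(i)})$ inside $\PSL_2(\FF_\ell)$. The remaining steps are Kronecker-product bookkeeping plus a standard rational canonical form argument.
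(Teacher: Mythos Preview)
Your outline is close to the paper's and mostly correct, but there is a genuine gap in the passage ``Using the identification $\Omega(V_\ell)\cong \SL_2(\FF_\ell)\otimes \SL_2(\FF_\ell)$ of Lemma~\ref{L:new psi} (where, in our basis, $\calH_\ell = \{(A,I):A\in \SL_2(\FF_\ell)\}$).''  Lemma~\ref{L:new psi} produces \emph{some} isomorphism $V_\ell\cong \FF_\ell^2\otimes\FF_\ell^2$ of quadratic spaces; there is no a~priori reason it agrees with the basis you chose from $W\oplus W$.  Without this compatibility you only know that $\rho_\ell(c)$ lies in the normalizer of $\calH_\ell$ inside $\GL_4(\FF_\ell)$, which is $\GL_2\otimes\GL_2$, so you can write $\rho_\ell(c)=A_c\otimes B_c$ with $\det A_c=1$ but only $\det B_c=\pm 1$.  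Then your ``plus-sign'' elimination fails: if $A_c^2=B_c^2=I$ with $\det B_c=-1$, one gets $\det(I-\rho_\ell(c)T)=(1-T^2)^2=1-2T^2+T^4$, which is of the form $1+(b^2-2)T^2+T^4$ with $b=0$ and is \emph{not} excluded by Lemma~\ref{L:other coset}.

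The gap is easily filled: since $\beta(\Gal_{\QQ(i)})=\SL_2(\FF_\ell)$ acts absolutely irreducibly on $W$, any $\Gal_{\QQ(i)}$-invariant bilinear form on $W$ is a multiple of the symplectic form, hence skew; thus the symmetric pairing $\ang{\,}{\,}$ restricts to zero on each summand of $W\oplus W$ and is given off-diagonally by a nonzero scalar multiple of the symplectic pairing.  This is exactly the tensor-product form $b$ from the proof of Lemma~\ref{L:new psi}, so in your basis $\Omega(V_\ell)=\SL_2\otimes\SL_2$ and $B_c\in\SL_2(\FF_\ell)$ as you need.  The paper avoids this issue by working from the start with $\widetilde\rho_\ell=\psi_\ell\circ\rho_\ell$, so that $\widetilde\rho_\ell(\Gal_\QQ)\subseteq\SL_2\otimes\SL_2$ by construction; it then shows $\widetilde\rho_\ell(\Gal_{\QQ(i)})=\SL_2\otimes\langle I\rangle$ by proving $\vartheta_\ell'(\Gal_{\QQ(i)})=1$ (a quotient of the perfect group $\SL_2(\FF_\ell)$ of order $1$ or $\ell$), and handles the other coset via $\vartheta_\ell'$ and the proof of Lemma~\ref{L:other coset} rather than via a specific choice of $c$.
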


Theorem~\ref{T:final} is of course a generalization of Theorem~\ref{T:main 2}; note that $\calG_\ell/\langle \gamma \rangle \cong \PSL_2(\FF_\ell)$.

\subsection{Proof of Theorem~\ref{T:final}}
Fix a prime $\ell\geq 11$ and define $\calV_\ell := \FF_\ell^2 \otimes_{\FF_\ell} \FF_\ell^2$.  Let
\[
\widetilde\rho_\ell \colon \Gal_\QQ \to \SL_2(\FF_\ell) \otimes \SL_2(\FF_\ell) \subseteq \Aut_{\FF_\ell}(\calV_\ell)
\]
and $\vartheta_\ell , \vartheta_\ell'\colon \Gal_\QQ \to \PSL_2(\FF_\ell)$ be the representations of \S\ref{SS:psi}.    We choose $\vartheta_\ell$ and $\vartheta_\ell'$ so that they satisfy the conditions of Lemma~\ref{L:need to switch}.

\begin{lemma} \label{L:beta image}
The group $\rho_\ell(\Gal_{\QQ(i)})$, and hence also $\widetilde\rho_\ell(\Gal_{\QQ(i)})$, is isomorphic to $\SL_2(\FF_\ell)$. 
\end{lemma}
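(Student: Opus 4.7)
The plan is to first show that $\vartheta_\ell(\Gal_{\QQ(i)}) = \PSL_2(\FF_\ell)$, use this to force the case $n = 2$ in Proposition~\ref{P:V semisimplification}, identify $\rho_\ell(\Gal_{\QQ(i)})$ with $\beta(\Gal_{\QQ(i)}) \subseteq \SL_2(\FF_\ell)$, and then deduce that this image must be all of $\SL_2(\FF_\ell)$ because the central extension $\SL_2(\FF_\ell) \to \PSL_2(\FF_\ell)$ does not split.

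First I would show $\vartheta_\ell(\Gal_{\QQ(i)}) = \PSL_2(\FF_\ell)$. Since $\Gal_{\QQ(i)}$ is normal of index $2$ in $\Gal_\QQ$, its image under $\vartheta_\ell$ is a normal subgroup of $\vartheta_\ell(\Gal_\QQ) = \PSL_2(\FF_\ell)$ (surjectivity by Lemma~\ref{L:surjectivity 1}). Simplicity of $\PSL_2(\FF_\ell)$ for $\ell \geq 5$ forces $\vartheta_\ell(\Gal_{\QQ(i)})$ to be either trivial or everything, and the trivial case is excluded because $\vartheta_\ell(\Gal_\QQ)$ would then have order at most $2$.

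Next I would rule out $n = 1$ in Proposition~\ref{P:V semisimplification}. In that case every composition factor of $V_\ell$ under $\Gal_{\QQ(i)}$ is $1$-dimensional, so with respect to a basis adapted to a composition series the image $\rho_\ell(\Gal_{\QQ(i)})$ consists of upper triangular matrices, making it an extension of an abelian group (the characters on the composition factors) by a unipotent group, hence solvable. Since it surjects onto the nonsolvable group $\vartheta_\ell(\Gal_{\QQ(i)}) = \PSL_2(\FF_\ell)$, this is impossible. Therefore $n = 2$ and $V_\ell \cong W \oplus W$ as $\FF_\ell[\Gal_{\QQ(i)}]$-modules, with $\beta(\Gal_{\QQ(i)}) \subseteq \SL_2(\FF_\ell)$ by Lemma~\ref{L:beta image eq1}, and the diagonal action on $W \oplus W$ yields $\rho_\ell(\Gal_{\QQ(i)}) \cong \beta(\Gal_{\QQ(i)})$.

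The main obstacle is then to show $\beta(\Gal_{\QQ(i)}) = \SL_2(\FF_\ell)$. Composing $\beta$ with the quotient $\SL_2(\FF_\ell) \to \PSL_2(\FF_\ell)$ gives a homomorphism whose values on each $\sigma \in \Gal_{\QQ(i)}$ have trace $\pm \tr(\beta(\sigma))$; by Lemma~\ref{L:need to switch} this coincides with $\vartheta_\ell|_{\Gal_{\QQ(i)}}$ (both lift traces of $\rho_\ell(\sigma)$ via the factorization $\det(I - \rho_\ell(\sigma) T) = (1 - \tr(\beta(\sigma)) T + T^2)^2$), which is surjective. So $\beta(\Gal_{\QQ(i)})$ is a subgroup of $\SL_2(\FF_\ell)$ mapping onto $\PSL_2(\FF_\ell)$. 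The central extension $\{\pm I\} \to \SL_2(\FF_\ell) \to \PSL_2(\FF_\ell)$ does not split, because any lift to $\SL_2(\FF_\ell)$ of an involution in $\PSL_2(\FF_\ell)$ must have trace $0$ and determinant $1$, hence squares to $-I$ rather than $I$. If $\beta(\Gal_{\QQ(i)})$ were a proper subgroup of $\SL_2(\FF_\ell)$ surjecting onto $\PSL_2(\FF_\ell)$, then it would miss $-I$ and therefore furnish such a section. This forces $\beta(\Gal_{\QQ(i)}) = \SL_2(\FF_\ell)$, and the corresponding statement for $\widetilde\rho_\ell(\Gal_{\QQ(i)})$ follows via the isomorphism $\psi_\ell$ of Lemma~\ref{L:new psi}.
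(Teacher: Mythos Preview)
Your overall strategy matches the paper's: rule out $n=1$ via solvability, pass to $\beta$ with image in $\SL_2(\FF_\ell)$, and use that $\PSL_2(\FF_\ell)$ is a quotient to force the image to be all of $\SL_2(\FF_\ell)$. The paper finishes by observing that $|\beta(\Gal_{\QQ(i)})|\geq |\PSL_2(\FF_\ell)|$ gives index at most $2$ in $\SL_2(\FF_\ell)$, and perfectness of $\SL_2(\FF_\ell)$ rules out index $2$; your non-splitting argument is an acceptable alternative to this last step.

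There is, however, a genuine gap in your middle paragraph. You assert that the composite $\bar\beta\colon \Gal_{\QQ(i)}\xrightarrow{\beta}\SL_2(\FF_\ell)\to\PSL_2(\FF_\ell)$ \emph{coincides} with $\vartheta_\ell|_{\Gal_{\QQ(i)}}$ because both send $\sigma$ to an element of trace $\pm b$. Equality of traces up to sign does not determine an element of $\PSL_2(\FF_\ell)$, so this does not show the two homomorphisms agree, and hence does not directly give surjectivity of $\bar\beta$. What you actually have, and what suffices, is that $\vartheta_\ell$ factors through $\rho_\ell$, and $\rho_\ell(\Gal_{\QQ(i)})\cong\beta(\Gal_{\QQ(i)})$; therefore $\PSL_2(\FF_\ell)$ is a quotient of $H:=\beta(\Gal_{\QQ(i)})$ in the abstract sense, so $|H|\geq|\PSL_2(\FF_\ell)|$. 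If $H\neq\SL_2(\FF_\ell)$ then $|H|=|\PSL_2(\FF_\ell)|$ and $H\cong\PSL_2(\FF_\ell)$; simplicity then forces $-I\notin H$, so $H$ injects into $\PSL_2(\FF_\ell)$ under the natural map and, by cardinality, surjects --- now your non-splitting argument applies. With this correction your proof is complete.
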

\begin{proof}
Let $W$ be an irreducible $\FF_\ell[\Gal_{\QQ(i)}]$-submodule of $V_\ell$.  If $W$ has dimension $1$ over $\FF_\ell$, then Proposition~\ref{P:V semisimplification}(\ref{I:V semisimplification a}) tells us that $\rho_\ell(\Gal_{\QQ(i)})$ is a solvable group.   This is impossible since the non-abelian simple group $\PSL_2(\FF_\ell)$ is a quotient of $\rho_\ell(\Gal_\QQ)$, and hence also of $\rho_\ell(\Gal_{\QQ(i)})$, by Theorem~\ref{T:main 2}.    By Proposition~\ref{P:V semisimplification}, we deduce that $W$ has dimension $2$ over $\FF_\ell$ and that $V_\ell$ and $W\oplus W$ are isomorphic $\FF_\ell[\Gal_{\QQ(i)}]$-modules.  Let $\beta \colon \Gal_{\QQ(i)} \to \Aut_{\FF_\ell}(W)\cong \GL_2(\FF_\ell)$ be the representation describing the Galois action on $W$.   It thus suffices to show that $\beta(\Gal_{\QQ(i)})=\SL_2(\FF_\ell)$ since $\beta_\ell(\Gal_{\QQ(i)})\cong \rho_\ell(\Gal_{\QQ(i)})$ and since $\rho_\ell$ and $\widetilde\rho_\ell$ are isomorphic.

We have $\beta_\ell(\Gal_{\QQ(i)})\subseteq \SL_2(\FF_\ell)$ by Lemma~\ref{L:beta image eq1}.  The group $\PSL_2(\FF_\ell)$ is a quotient of $\rho_\ell(\Gal_{\QQ(i)})$ and hence is also a quotient of $\beta(\Gal_{\QQ(i)})$.  So $\beta(\Gal_{\QQ(i)})$ is a subgroup of $\SL_2(\FF_\ell)$ that has a quotient isomorphic $\PSL_2(\FF_\ell)$.   Since $\SL_2(\FF_\ell)$ is perfect (and in particular has no subgroups of index $2$), we deduce that $\beta(\Gal_{\QQ(i)})=\SL_2(\FF_\ell)$.  
\end{proof}

\begin{lemma} \label{L:theta trivial}
We have $\vartheta_\ell'(\Gal_{\QQ(i)})=1$.
\end{lemma}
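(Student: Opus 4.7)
The plan is to combine the largeness statement of Lemma~\ref{L:beta image} with the smallness statement of Lemma~\ref{L:need to switch}. Set $G := \widetilde\rho_\ell(\Gal_{\QQ(i)})$. Since $\widetilde\rho_\ell = \psi_\ell \circ \rho_\ell$ and $\psi_\ell$ is an isomorphism, Lemma~\ref{L:beta image} gives $G \cong \SL_2(\FF_\ell)$. By construction, $\vartheta_\ell'(\Gal_{\QQ(i)})$ is the image of $G$ under the composition
\[
\SL_2(\FF_\ell) \otimes \SL_2(\FF_\ell) \twoheadrightarrow \PSL_2(\FF_\ell)
\]
projecting onto the second factor; call this image $H \subseteq \PSL_2(\FF_\ell)$.

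Next I would use the fact that $H$ is simultaneously a quotient of $\SL_2(\FF_\ell)$ and a subgroup of $\PSL_2(\FF_\ell)$. For $\ell \geq 5$ the only normal subgroups of $\SL_2(\FF_\ell)$ are $\{I\}$, $\{\pm I\}$ and $\SL_2(\FF_\ell)$ itself, so the quotients are (up to isomorphism) $\SL_2(\FF_\ell)$, $\PSL_2(\FF_\ell)$ and the trivial group. Since $\SL_2(\FF_\ell)$ is not isomorphic to a subgroup of $\PSL_2(\FF_\ell)$ (as the latter has strictly smaller order), we conclude that either $H = 1$ or $H = \PSL_2(\FF_\ell)$.

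Finally, Lemma~\ref{L:need to switch} forces $|H| = |\vartheta_\ell'(\Gal_{\QQ(i)})| \in \{1,\ell\}$. For $\ell \geq 11$ we have $|\PSL_2(\FF_\ell)| = \ell(\ell^2-1)/2 > \ell$, so the second alternative $H = \PSL_2(\FF_\ell)$ is impossible. Thus $H = 1$, i.e., $\vartheta_\ell'(\Gal_{\QQ(i)}) = 1$.

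The argument is essentially a short group-theoretic squeeze: Lemma~\ref{L:beta image} supplies a huge image on the orthogonal side, the simplicity of $\PSL_2(\FF_\ell)$ (equivalently, the structure of normal subgroups of $\SL_2(\FF_\ell)$) forces any projection to be all-or-nothing, and Lemma~\ref{L:need to switch} rules out the ``all'' case. The only minor subtlety to keep straight will be that the target of $\vartheta_\ell'$ is $\PSL_2(\FF_\ell)$ (not $\SL_2$), which is what excludes the $\SL_2(\FF_\ell)$-quotient option and makes the dichotomy genuinely two-valued.
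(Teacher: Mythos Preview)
Your proof is correct and follows essentially the same approach as the paper: both arguments combine Lemma~\ref{L:beta image} (so that $\vartheta_\ell'(\Gal_{\QQ(i)})$ is a quotient of $\SL_2(\FF_\ell)$) with Lemma~\ref{L:need to switch} (so that this quotient has order $1$ or $\ell$). The only cosmetic difference is that the paper invokes perfectness of $\SL_2(\FF_\ell)$ to rule out the order-$\ell$ (hence abelian) quotient directly, whereas you classify all three quotients of $\SL_2(\FF_\ell)$ and exclude the two large ones by cardinality; both routes reach the same conclusion with the same inputs.
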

\begin{proof}
By Lemma~\ref{L:need to switch} and our choice of $\vartheta_\ell$, the group $\vartheta'_\ell(\Gal_{\QQ(i)})$ has order $1$ or $\ell$.   Since $\vartheta'_\ell(\Gal_{\QQ(i)})$ is a quotient of the perfect group $\rho_\ell(\Gal_{\QQ(i)})\cong \SL_2(\FF_\ell)$ by Lemma~\ref{L:beta image}, we deduce that has $\vartheta'_\ell(\Gal_{\QQ(i)})=1$.
\end{proof}

Lemma~\ref{L:theta trivial} implies that $\widetilde\rho_\ell(\Gal_{\QQ(i)})$ is a subgroup of $\SL_2(\FF_\ell) \otimes \langle \pm I \rangle = \SL_2(\FF_\ell) \otimes \langle I \rangle$.   So from Lemma~\ref{L:beta image}, we deduce that 
\begin{equation}  \label{E:SL2 final}
\widetilde\rho_\ell(\Gal_{\QQ(i)})= \SL_2(\FF_\ell) \otimes \langle I \rangle.
\end{equation}

\begin{lemma} \label{L:matrix B}
There is a matrix $B\in \SL_2(\FF_\ell)$ such that $B^2=-I$ and $\vartheta_\ell'(\Gal_\QQ)$ is generated by the image of $B$ in $\PSL_2(\FF_\ell)$.
\end{lemma}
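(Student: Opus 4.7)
The plan is to combine Lemma \ref{L:theta trivial} with the eigenvalue analysis already performed in the proof of Lemma \ref{L:other coset}. First, Lemma \ref{L:theta trivial} tells us that $\vartheta_\ell'$ is trivial on $\Gal_{\QQ(i)}$, so it factors through the quotient $\Gal(\QQ(i)/\QQ)$, which has order $2$. Consequently $\vartheta_\ell'(\Gal_\QQ)$ is either trivial or cyclic of order $2$, and in the non-trivial case it is generated by $\vartheta_\ell'(\sigma)$ for any choice of $\sigma \in \Gal_\QQ - \Gal_{\QQ(i)}$.

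Next I would fix any such $\sigma$ and write $\widetilde\rho_\ell(\sigma) = A \otimes B$ with $A, B \in \SL_2(\FF_\ell)$; such a decomposition exists since $\widetilde\rho_\ell(\sigma)$ lies in $\SL_2(\FF_\ell) \otimes \SL_2(\FF_\ell)$, and the pair $(A,B)$ is determined up to the simultaneous sign change $(A,B) \mapsto (-A,-B)$. By construction the image of $B$ in $\PSL_2(\FF_\ell)$ is precisely $\vartheta_\ell'(\sigma)$, so it suffices to show that $B^2 = -I$.

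For this I re-invoke the eigenvalue computation carried out inside the proof of Lemma \ref{L:other coset}: writing the eigenvalues of $A$ and $B$ as $\{\lambda_1,\lambda_1^{-1}\}$ and $\{\lambda_2,\lambda_2^{-1}\}$, it was shown there that $\lambda_2^4 = 1$ (using Lemma \ref{L:need to switch}) and then that $\lambda_2 \neq \pm 1$ (using that $P_3(T)$ modulo $\ell$ is separable), so $\lambda_2$ is a primitive fourth root of unity. Hence $\tr(B) = \lambda_2 + \lambda_2^{-1} = 0$, and the Cayley--Hamilton theorem gives $B^2 = -\det(B) I = -I$. In particular $B \neq \pm I$, so the image of $B$ in $\PSL_2(\FF_\ell)$ is a non-identity element, forcing $\vartheta_\ell'(\Gal_\QQ)$ to have order exactly $2$ and to be generated by it.

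The only non-routine ingredient is the invocation of the eigenvalue description of $B$ from Lemma \ref{L:other coset}; everything else is a direct application of Lemma \ref{L:theta trivial} together with Cayley--Hamilton, so I do not anticipate a serious obstacle.
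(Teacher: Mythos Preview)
Your proposal is correct and follows essentially the same route as the paper: both use Lemma~\ref{L:theta trivial} to reduce to a generator coming from any $\sigma\in\Gal_\QQ-\Gal_{\QQ(i)}$, and both invoke the eigenvalue analysis from the proof of Lemma~\ref{L:other coset} to see that $\lambda_2$ is a primitive fourth root of unity, whence $B^2=-I$. Your explicit use of Cayley--Hamilton and your remark that this forces $\vartheta_\ell'(\Gal_\QQ)$ to have order exactly $2$ are minor expository additions, not a different argument.
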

\begin{proof}
By Lemma~\ref{L:theta trivial}, there is a matrix $B\in \SL_2(\FF_\ell)$ such that the image $\bbar{B}$ of $B$ in $\PSL_2(\FF_\ell)$ generates $\vartheta_\ell'(\Gal_\QQ)$ and $\bbar{B}^2=-I$.   We have $\vartheta_\ell'(\sigma)=\bbar B$ for all $\sigma\in \Gal_\QQ-\Gal_{\QQ(i)}$.  The proof of Lemma~\ref{L:other coset} shows that the eigenvalues of $B$ are primitive $4$-th roots of unity, so $B^2=-I$.
\end{proof}

By Lemma~\ref{L:matrix B}, the group $\vartheta_\ell'(\Gal_\QQ)$ is generated by the image in $\PSL_2(\FF_\ell)$ of some matrix $B\in \SL_2(\FF_\ell)$ that satisfies $B^2 = - I$.   So there is an inclusion $\widetilde\rho_\ell(\Gal_\QQ) \subseteq \SL_2(\FF_\ell) \otimes \langle B \rangle$.   This inclusion with (\ref{E:SL2 final}) proves that 
\begin{equation} \label{E:SL2plus final}
\widetilde\rho_\ell(\Gal_\QQ) = \SL_2(\FF_\ell) \otimes \langle B \rangle.
\end{equation}
After conjugating $\widetilde\rho_\ell$ by an element of $\langle I \rangle \otimes \GL_2(\FF_\ell)$, we may assume that $B=\left(\begin{smallmatrix} 0 & -1 \\  1 & 0 \end{smallmatrix}\right)$.

Let $e_1$ and $e_2$ be the standard basis of $\FF_\ell^2$.   Define 
\[
\varrho_\ell \colon \Gal_\QQ \to \GL_4(\FF_\ell)
\]
to be the representation obtained from $\widetilde\rho_\ell$ by using the basis $\beta:=\{e_1\otimes e_1, e_2 \otimes e_1, e_1 \otimes e_2,  e_2 \otimes e_2 \}$ of $\calV_\ell$.   Using (\ref{E:SL2 final}), we find that $\varrho_\ell(\Gal_{\QQ(i)}) = \calH_\ell$.   With respect to the basis $\beta$, $I\otimes B$ acts on $\calV_\ell$ via the matrix $\gamma$.  So $\varrho_\ell(\Gal_\QQ)$ is generated by $\calH_\ell$ and $\gamma$, and hence equals $\calG_\ell$.

\subsection{Modularity}  \label{S:modularity}

Fix a prime $\ell \geq 11$ and take $\varrho_\ell$ as in Theorem~\ref{T:final}.  Define the ring $\OO:=\ZZ[i]$.  We view $\FF_\ell^4$ as an $\OO$-module by letting $i$ act as $\gamma$  (note that $\gamma^2=-I$).   This action turns $\FF_\ell^4$ into an $\OO/\ell\OO$-module that is free of rank $2$.   Let $\{e_1,e_2,e_3,e_4\}$ be the standard basis of $\FF_\ell^4$.  One can verify that $\beta:=\{e_1,e_2\}$ is a basis of $\FF_\ell^4$ as an $\OO/\ell \OO$-module.   Using the basis $\beta$, we can write $\varrho_\ell$ as a representation
\[
\varphi_\ell \colon \Gal_\QQ \to \GL_2(\OO/\ell \OO).
\]
By Theorem~\ref{T:final}, we find that $\varphi_\ell(\Gal_{\QQ(i)}) = \SL_2(\FF_\ell)$ and that $\varphi_\ell(\Gal_\QQ)$ is generated by $\SL_2(\FF_\ell)$ and the scalar matrix $\bbar{i} I$ where $\bbar{i}$ is the image of $i$ in $\OO/\ell \OO$.

Let $\lambda$ be a prime ideal of $\OO$ lying over $\ell$ and set $\FF_\lambda=\OO/\lambda$.   Composing $\varphi_\ell$ with the reduction modulo $\lambda$ map gives a representation
\[
\varphi_\lambda \colon \Gal_\QQ \to \GL_2(\FF_\lambda).
\]
We have $\varphi_\lambda(\Gal_{\QQ(i)}) = \SL_2(\FF_\ell)$ and the group $\varphi_\lambda(\Gal_\QQ)$ is generated by $\SL_2(\FF_\ell)$ and the scalar matrix $\bbar{i} I$ where $\bbar{i}$ is the image of $i$ in $\FF_\lambda$.   Note that the group $\varphi_\lambda(\Gal_\QQ)/\langle \bbar{i} I \rangle$ is isomorphic to $\PSL_2(\FF_\ell)$.

The representation $\varphi_\lambda$ is absolutely irreducible since $\varphi_\lambda(\Gal_{\QQ(i)}) = \SL_2(\FF_\ell)$. For all $\sigma\in \Gal_\QQ - \Gal_{\QQ(i)}$, we have $\det(\varphi_\lambda(\sigma)) = \bbar{i}^2=-1$.  In particular, $\det(\varphi_\lambda(c))=-1$ for any $c\in \Gal_\QQ$ that arises from complex conjugation under some embedding $\Qbar \hookrightarrow \CC$.  By Serre's modularity theorem \cite{MR885783}, which was proved by Khare and Wintenberger \cite{Khare:2009}, we find that the representation $\varphi_\lambda$ arises from a cuspidal eigenform $f$.

We shall not describe $f$ here (it can be chosen to have weight $3$ and independent of $\lambda$ and $\ell\geq 11$).  Motivated by this paper, we will discuss in future work how to obtain other $\PSL_2(\FF_\ell)$ extensions of $\QQ$ using suitable eigenforms.  

\appendix
\section{Proof of Lemmas from \S\ref{S:basics}} \label{SS:appendix}

Fix an odd prime $\ell$.   For background on \'etale cohomology, see \cite{MR559531}.

\subsection{Galois representations}

For each positive integer $n$, let $E[\ell^n]$ be the $\ell^n$-torsion subscheme of $E$; it is a sheaf of $\ZZ/\ell^n\ZZ$-modules on $U$ which is free of rank $2$.  The sheaves $\{E[\ell^n]\}_{n\geq 1}$ with the multiplication by $\ell$ morphisms $E[\ell^{n+1}]\to E[\ell^n]$ form a sheaf of $\ZZ_\ell$-modules on $U$ which we denote by $T_\ell(E)$.    

Let $\eta$ be the generic point of $U$.   Set $K=\Qbar(t)$.   Fix an algebraic closure $\Kbar$ of $\QQ(t)$ containing $K$ and let $\bbar{\eta}$ be the corresponding geometric generic point of $U$.  The sheaf $E[\ell^n]$ on $U$ corresponds to a representation
\[
\beta_{\ell^n} \colon \pi_1(U,\bbar{\eta}) \to \Aut_{\ZZ/\ell^n\ZZ}(E[\ell^n]_{\bbar\eta})\cong \GL_2(\ZZ/\ell^n\ZZ).
\]
Let $E_\eta$ be fiber of $E\to U$ above $\eta$; it is the elliptic curve over $\QQ(t)$ defined by (\ref{E:main}).
We can identify the stalk $E[\ell^n]_{\bbar\eta}$ with the group of $\ell^n$-torsion points in $E_{\eta}(\Kbar)$.  The representation $\beta_{\ell^n}|_{\pi_1(U_\Qbar,\bbar\eta)}$ extends to a representation $\tilde\beta_{\ell^n}\colon \Gal(\Kbar/K) \to \GL_2(\ZZ/\ell^n\ZZ)$.

\begin{lemma} \label{L:big monodromy}
For all $n\geq 1$, we have $\beta_{\ell^n}(\pi_1(U_\Qbar,\bbar{\eta}))=\SL_2(\ZZ/\ell^n\ZZ)$.
\end{lemma}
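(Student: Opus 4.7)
The plan is to reduce the computation to the classical monodromy of the Legendre family via an explicit change of variables, and then apply strong approximation.

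\medskip

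\textbf{Step 1: Identification with a twisted pullback of Legendre.} The $x$-coordinates of the $2$-torsion of $E_\eta$ are $0,-1,-t^2$, so its $\lambda$-invariant equals $t^2$. Substituting $x\mapsto -x$ in (\ref{E:main}) converts it to $-t(t^2-1)\,y^2=x(x-1)(x-t^2)$: that is, $E_\eta$ is isomorphic over $\Qbar(t)$ to a quadratic twist of the pullback of the Legendre family $y^2=x(x-1)(x-\lambda)$ along
\[
\lambda\colon U_\Qbar \longrightarrow (\PP^1-\{0,1,\infty\})_\Qbar,\qquad t\mapsto t^2.
\]
This pullback map is a finite étale cover of degree $2$ since $t\mapsto t^2$ ramifies only at $t=0,\infty$, both removed from $U$. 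The quadratic twist corresponds to a character $\chi\colon \pi_1(U_\Qbar,\bbar{\eta})\to\{\pm I\}$ which is unramified on $U_\Qbar$ (it cuts out the étale double cover defined by $\sqrt{t(t^2-1)}$). Thus $\beta_{\ell^n}$ factors as $\chi\otimes\rho$, where $\rho$ is the pullback of the Legendre monodromy.

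\medskip

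\textbf{Step 2: Legendre monodromy and strong approximation.} Classically, the Tate-module monodromy of the Legendre family is $\Gamma(2)=\ker(\SL_2(\ZZ)\to\SL_2(\FF_2))$, via the identification of $\pi_1^{\et}((\PP^1-\{0,1,\infty\})_\Qbar)$ with the profinite completion of $\Gamma(2)$. For every odd prime $\ell$ and $n\geq 1$, the Chinese Remainder decomposition $\SL_2(\ZZ/2\ell^n\ZZ)\cong \SL_2(\FF_2)\times \SL_2(\ZZ/\ell^n\ZZ)$ combined with the surjectivity of $\SL_2(\ZZ)\to \SL_2(\ZZ/2\ell^n\ZZ)$ shows that $\Gamma(2)$ surjects onto $\SL_2(\ZZ/\ell^n\ZZ)$. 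Consequently, the image of $\rho$ is an index-at-most-two subgroup of $\SL_2(\ZZ/\ell^n\ZZ)$, and accounting in addition for the twist by $\chi$ shows that $\beta_{\ell^n}(\pi_1(U_\Qbar,\bbar{\eta}))$ contains $\rho(\ker\chi)$, itself a subgroup of $\SL_2(\ZZ/\ell^n\ZZ)$ of index dividing $4$.

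\medskip

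\textbf{Step 3: No index-two subgroups.} It now suffices to observe that $\SL_2(\ZZ/\ell^n\ZZ)$ contains no subgroup of index $2$ for any odd $\ell$. The kernel of $\SL_2(\ZZ/\ell^n\ZZ)\twoheadrightarrow \SL_2(\FF_\ell)$ is a pro-$\ell$ group of odd order, so any putative subgroup of index $2$ would descend to one of $\SL_2(\FF_\ell)$; but $\SL_2(\FF_\ell)^{\ab}$ is trivial for $\ell\geq 5$ (perfectness) and cyclic of order $3$ for $\ell=3$, leaving no room for an $\FF_2$-quotient. Applying this observation twice --- once to conclude $\rho(\pi_1(U_\Qbar,\bbar{\eta}))=\SL_2(\ZZ/\ell^n\ZZ)$, then once more to conclude $\rho(\ker\chi)=\SL_2(\ZZ/\ell^n\ZZ)$ --- forces $\beta_{\ell^n}(\pi_1(U_\Qbar,\bbar{\eta}))=\SL_2(\ZZ/\ell^n\ZZ)$, completing the proof.

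\medskip

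The main conceptual step is Step~1, unwinding the Legendre identification through the explicit $\lambda$-invariant $\lambda=t^2$ and bookkeeping the quadratic twist; Steps~2 and 3 are then essentially formal, with the ``no index-two subgroup'' observation handling the $2$-power factors uniformly for all $n$.
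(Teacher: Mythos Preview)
Your argument is correct and takes a genuinely different, more explicit route than the paper. The paper simply invokes Proposition~2.1 of Cox--Parry \cite{MR757503}, a general criterion for full $\SL_2(\ZZ/\ell^n\ZZ)$ monodromy of an elliptic fibration in terms of the pole orders of the $j$-invariant: here $j(E_\eta)=256(t^4-t^2+1)^3 t^{-4}(t-1)^{-2}(t+1)^{-2}$ has poles of orders $4,2,2,4$, with $\lcm=4$, and since $\ell$ is odd the criterion applies. Your approach instead exploits the explicit geometry: the substitution $x\mapsto -x$ exhibits $E_\eta$ as a quadratic twist of the Legendre curve with parameter $\lambda=t^2$, so everything reduces to the classical Legendre monodromy together with the purely group-theoretic observation that $\SL_2(\ZZ/\ell^n\ZZ)$ admits no index-$2$ subgroup for $\ell$ odd. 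This is more hands-on and self-contained, at the cost of being tailored to this particular equation rather than invoking a general theorem.

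One small imprecision worth flagging: the image of the Legendre monodromy in $\SL_2(\ZZ)$ is not literally $\Gamma(2)$ but the free rank-$2$ subgroup generated by the local monodromies at $\lambda=0,1$; note that $-I\in\Gamma(2)$ while $\pi_1^{\et}((\PP^1-\{0,1,\infty\})_\Qbar)$ is the profinite completion of $\bar\Gamma(2)=\Gamma(2)/\{\pm I\}\cong F_2$, not of $\Gamma(2)$. This introduces at most one further factor of $2$ in the index, which your no-index-$2$ argument absorbs without modification, so the proof stands as written.
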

\begin{proof}
It suffices to show that $\tilde\beta_{\ell^n}(\Gal(\Kbar/K))=\SL_2(\ZZ/\ell^n\ZZ)$.  This follows from Proposition~2.1 of \cite{MR757503} and requires that $\ell$ is odd; note that the $j$-invariant of $E_\eta/\QQ(t)$ is $256(t^4-t^2+1)^3 t^{-4}(t-1)^{-2}(t+1)^{-2}$ and hence the least common multiple of the order of its poles is $4$. 
\end{proof}

\begin{lemma} \label{L:H2 vanish}
For all $n\geq 1$ and $i\neq 1$, we have $H^i_c(U_\Qbar, E[\ell^n])=0$.
\end{lemma}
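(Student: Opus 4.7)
Since $U_{\Qbar}$ is a smooth affine curve of dimension~$1$, the groups $H^i_c(U_{\Qbar}, E[\ell^n])$ vanish automatically for $i\geq 3$, so only the cases $i=0$ and $i=2$ require argument.  The plan is to first show $H^0(U_{\Qbar}, E[\ell^n])=0$ using Lemma~\ref{L:big monodromy}, then deduce $H^0_c=0$ from a direct sheaf-theoretic inclusion, and finally deduce $H^2_c=0$ via Poincar\'e duality combined with the Weil pairing.

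First I would compute $H^0(U_{\Qbar}, E[\ell^n])$.  Since $E[\ell^n]$ is a lisse $\ZZ/\ell^n\ZZ$-sheaf on the connected scheme $U_{\Qbar}$, this group is canonically identified with the $\pi_1(U_{\Qbar}, \bbar{\eta})$-invariants in the stalk $E[\ell^n]_{\bbar{\eta}}\cong (\ZZ/\ell^n\ZZ)^2$.   By Lemma~\ref{L:big monodromy}, the image of $\pi_1(U_{\Qbar}, \bbar{\eta})$ in $\GL_2(\ZZ/\ell^n\ZZ)$ is all of $\SL_2(\ZZ/\ell^n\ZZ)$, and this group has no nonzero fixed vectors on $(\ZZ/\ell^n\ZZ)^2$ (the unipotent matrices $\left(\begin{smallmatrix}1 & 1\\ 0 & 1\end{smallmatrix}\right)$ and $\left(\begin{smallmatrix}1 & 0\\ 1 & 1\end{smallmatrix}\right)$ together force both coordinates of a fixed vector to vanish).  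Hence $H^0(U_{\Qbar}, E[\ell^n])=0$.

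For $i=0$, letting $j\colon U_{\Qbar}\hookrightarrow \PP^1_{\Qbar}$ denote the inclusion, the canonical sheaf inclusion $j_!E[\ell^n]\hookrightarrow j_*E[\ell^n]$ induces an injection $H^0_c(U_{\Qbar}, E[\ell^n]) \hookrightarrow H^0(U_{\Qbar}, E[\ell^n])$, which therefore also vanishes.  For $i=2$, I would invoke Poincar\'e duality for the smooth $1$-dimensional scheme $U_{\Qbar}$ with torsion coefficients, yielding a perfect pairing
\[
H^2_c(U_{\Qbar}, E[\ell^n]) \times H^0(U_{\Qbar}, \mathcal{H}om(E[\ell^n], \ZZ/\ell^n\ZZ(1))) \to \ZZ/\ell^n\ZZ.
\]
The Weil pairing $E[\ell^n]\times E[\ell^n]\to \mu_{\ell^n}$ furnishes an isomorphism of lisse sheaves $E[\ell^n]\cong \mathcal{H}om(E[\ell^n], \mu_{\ell^n})$ on $U$, so the right-hand factor of the duality pairing is isomorphic to $H^0(U_{\Qbar}, E[\ell^n])=0$ from the previous step.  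Therefore $H^2_c(U_{\Qbar}, E[\ell^n])=0$.  The only step demanding some care is invoking Poincar\'e duality in the correct form for a lisse $\ZZ/\ell^n\ZZ$-sheaf on a smooth, non-proper curve over an algebraically closed field, but this is entirely standard (see \cite{MR559531}).
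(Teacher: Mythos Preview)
Your proof is correct and follows essentially the same route as the paper: vanishing for $i\geq 3$ by dimension, Poincar\'e duality plus the Weil pairing to reduce $i=2$ to the vanishing of $H^0(U_{\Qbar},E[\ell^n])$, and Lemma~\ref{L:big monodromy} to establish that vanishing. The only cosmetic difference is at $i=0$: the paper dispatches this in one phrase (``since $U$ is an affine curve''), whereas you deduce it from the inclusion $H^0_c\hookrightarrow H^0$ together with the $H^0$ computation you already need for $i=2$; both are standard and your version is arguably more self-contained.
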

\begin{proof}
The lemma holds for $i \geq 3$ since $U$ has dimension $1$.  The lemma holds for $i=0$ since $U$ is an affine curve.  We may now assume that $i=2$.
By Poincar\'e duality, it suffices to show that $H^0_{\et}(U_\Qbar, E[\ell^n]^\vee(1))=0$.  The Weil pairing shows that $E[\ell^n]^\vee(1)$ and $E[\ell^n]$ are isomorphic, so we need only show that $H^0_{\et}(U_\Qbar, E[\ell^n])=0$.   We have $H^0_{\et}(U_\Qbar, E[\ell^n]) = E[\ell^n]_{\bbar\eta}^{\pi_1(U,\bbar\eta)}$ which is $0$ by Lemma~\ref{L:big monodromy}.
\end{proof}

Let $j\colon U\hookrightarrow \PP^1_\QQ$ be the inclusion morphism.

\begin{lemma} \label{L:dim 4}
\begin{romanenum}
\item \label{L:dim 4 a}
For $n\geq 1$, we have $j_!(E[\ell^n])=j_*(E[\ell^n])$.   We have $j_!(T_\ell(E))=j_*(T_\ell(E))$.
\item \label{L:dim 4 b}
The $\FF_\ell$-vector space $V_\ell$ has dimension $4$.
\end{romanenum}
\end{lemma}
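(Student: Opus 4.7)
The plan is to prove (a) by a local analysis at each of the four missing points $s\in \{0,1,-1,\infty\}$ and then deduce (b) from Lemma~\ref{L:H2 vanish} combined with the Grothendieck--Ogg--Shafarevich Euler characteristic formula. Since $j\colon U \hookrightarrow \PP^1_\QQ$ is an open immersion, the cokernel of $j_!\calF \to j_*\calF$ for a locally constant sheaf $\calF$ on $U$ is a skyscraper supported on $\PP^1_\QQ - U = \{0,1,-1,\infty\}$ whose stalk at $s$ is the inertia invariant subgroup $\calF_{\bbar\eta}^{I_s}$. Thus part (a) reduces to showing that $E[\ell^n]_{\bbar\eta}^{I_s}=0$ for each missing point $s$.

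For this local analysis, the $j$-invariant of $E_\eta/\QQ(t)$ quoted in the proof of Lemma~\ref{L:big monodromy} is $256(t^4-t^2+1)^3/(t^4(t-1)^2(t+1)^2)$, which has poles of orders $4,2,2,4$ at $t=0,1,-1,\infty$ respectively. Combined with the fact that $E$ has additive reduction at each of these points (noted in \S\ref{SS:Lfunctions}) and the observation $v_s(j)<0$, the Kodaira reduction type is $I_m^*$ with $m$ equal to the pole order. For $I_m^*$ reduction, after a ramified quadratic extension of the completed local field (with corresponding quadratic character $\chi$ of $I_s$) the reduction becomes $I_m$ multiplicative; the tame inertia generator $\tau$ thus acts on $E[\ell]_{\bbar\eta}$ as $\chi(\tau)$ times a unipotent matrix $u$, i.e.\ as $-u$ in a suitable basis. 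Both eigenvalues of $\tau$ are then $-1$, so the odd hypothesis on $\ell$ gives $E[\ell]_{\bbar\eta}^{I_s}=0$. The vanishing for $E[\ell^n]$ then follows by induction on $n$ using the short exact sequence $0\to E[\ell]\to E[\ell^n]\to E[\ell^{n-1}]\to 0$, and passing to the inverse limit gives the corresponding statement for $T_\ell(E)$.

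For part (b), Lemma~\ref{L:H2 vanish} gives $H^i_c(U_\Qbar, E[\ell])=0$ for $i\neq 1$, so $\dim_{\FF_\ell} V_\ell = -\chi_c(U_\Qbar, E[\ell])$. Since $E[\ell]$ is lisse of rank $2$ on the smooth curve $U$ and we are in characteristic zero, all Swan conductors vanish and the Grothendieck--Ogg--Shafarevich formula reduces to $\chi_c(U_\Qbar, E[\ell]) = 2\cdot \chi_c(U_\Qbar) = 2\cdot(2-4) = -4$, yielding $\dim V_\ell = 4$.

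The main technical step is the local vanishing of inertia invariants at each bad point in part (a). A practical alternative to invoking the $I_m^*$ classification would be to run Tate's algorithm on a suitable change of variables of the Weierstrass model~(\ref{E:main}) localized at each $s$; this produces the minimal regular model explicitly and allows one to read off the inertia action directly, at the cost of more bookkeeping but avoiding any reference to external structural results.
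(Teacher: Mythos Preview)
Your proof is correct and follows essentially the same route as the paper: a local vanishing of inertia invariants at the four bad points (via the $I_m^*$ reduction type) for part~(\ref{L:dim 4 a}), followed by an Euler characteristic computation for part~(\ref{L:dim 4 b}). The only cosmetic difference is that the paper computes $\chi(\PP^1_\Qbar, j_*(E[\ell]))$ using the conductor formula of \cite{MR559531}*{V~Theorem~2.12}, whereas you compute $\chi_c(U_\Qbar, E[\ell])$ directly from $\chi_c(U_\Qbar)=-2$ and the vanishing of Swan terms in characteristic zero.
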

\begin{proof}
Take any closed point $x$ of $\PP^1_\Qbar$ and let $I_x$ be a corresponding inertia subgroup of $\Gal(\Kbar/K)$.    The stalk $j_*(E[\ell^n])_x$ is isomorphic to $E[\ell^n]_{\bbar\eta}^{I_x}$.  If $E_\eta$ has good, multiplicative or additive reduction at $x$, then the $\ZZ/\ell^n\ZZ$-module $j_*(\calF)_x$ is free of rank $2$, $1$ or $0$, respectively.   The elliptic curve $E_\eta$ has good reduction at the points of $U_\Qbar$ and additive reduction at the closed points of $\PP^1_\Qbar-U_\Qbar$ (moreover, it has reduction of Kodaira type $I_2^*$ or $I_4^*$ at $0$, $1$, $-1$ and $\infty$).   So $j_*(E[\ell^n])$ vanishes on $\PP^1_\Qbar-U_\Qbar$ and thus  $j_!(E[\ell^n])=j_*(E[\ell^n])$.   This implies that $j_!(T_\ell(E))=j_*(T_\ell(E))$ which completes the proof of (\ref{L:dim 4 a}).

Now restrict to the case $n=1$.  Define $\chi := \sum_i (-1)^i \dim_{\FF_\ell} H^i_{\et}(\PP^1_\Qbar, j_*(E[\ell]))$.  By \cite{MR559531}*{V~Theorem~2.12}, we have
\[
\chi = (2-2\cdot 0) \cdot 2 - {\sum}_x c_x
\]
where the sum is over the closed points $x$ of $\PP^1_\Qbar$ and $c_x$ is the exponent of the conductor of $j_*(E[\ell])$ at $x$.   The integer $c_x$ can be computed using Tate's algorithm \cite{MR0393039}.   Since our fields have characteristic $0$, we have $c_x = 2 -  d_x$ where $d_x$ is the dimension over $\FF_\ell$ of the stalk of $j_*(E[\ell])$ at $x$.    Therefore, $c_x=0$ if $x$ belongs to $U_\Qbar$ and $c_x=2$ otherwise.   Therefore, $\chi = 4- 4\cdot 2 = -4$.

By part (\ref{L:dim 4 a}) and Lemma~\ref{L:H2 vanish}, we have 
\[
\chi=\sum_i (-1)^i \dim_{\FF_\ell} H^i_{c}(U_\Qbar, E[\ell]) = - \dim_{\FF_\ell} H^1_{c}(U_\Qbar, E[\ell]) = -\dim_{\FF_\ell} V_\ell.
\]   
Therefore, $\dim_{\FF_\ell} V_\ell = -\chi = 4$, which proves (\ref{L:dim 4 b}).
\end{proof}

Define the  $\ZZ_\ell$-module $M_\ell:= H^1_c(U_\Qbar,T_\ell(E))$ and let 
\[
\rho_{\ell^\infty} \colon \Gal_\QQ \to \Aut_{\ZZ_\ell}(M_\ell)
\]
be the representation describing the natural Galois action on $M_\ell$. 

\begin{lemma} \label{L:M info}
\begin{romanenum}
\item \label{L:M info a}
The $\ZZ_\ell$-module $M_\ell$ is free of rank $4$.
\item \label{L:M info b}
The homomorphism $M_\ell \to V_\ell$ induced by the morphism $T_\ell(E)\to E[\ell]$ gives an isomorphism between $M_\ell/\ell M_\ell$ and $V_\ell$ that respects the $\Gal_\QQ$-actions.
\end{romanenum}
\end{lemma}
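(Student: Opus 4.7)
The plan is to derive both parts of Lemma~\ref{L:M info} from a single Bockstein-style long exact sequence attached to multiplication by $\ell$ on $T_\ell(E)$. Viewing $T_\ell(E)$ as an $\ell$-adic sheaf on $U$, there is a short exact sequence
$$0 \to T_\ell(E) \xrightarrow{\ell} T_\ell(E) \to E[\ell] \to 0.$$
Applying $H^*_c(U_\Qbar,-)$ will produce a long exact sequence whose outer terms I can kill using Lemma~\ref{L:H2 vanish}, leaving exactly the relation I need between $M_\ell$ and $V_\ell$.

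First I would check that $H^i_c(U_\Qbar, T_\ell(E)) = 0$ for $i \neq 1$ and that $M_\ell = \varprojlim_n H^1_c(U_\Qbar, E[\ell^n])$. The vanishings for $i=0$ and $i \geq 3$ are immediate (either from the affine curve having cohomological dimension $\leq 1$ for compact supports in degrees $\geq 3$, or from Lemma~\ref{L:H2 vanish} for $i=0$). For $i=2$ and for the identification of $M_\ell$ with the inverse limit, the key point is a Mittag--Leffler argument: applying $H^*_c$ to the sequence $0 \to E[\ell] \to E[\ell^{n+1}] \to E[\ell^n] \to 0$ and using Lemma~\ref{L:H2 vanish} shows that the transition maps $H^1_c(U_\Qbar, E[\ell^{n+1}]) \to H^1_c(U_\Qbar, E[\ell^n])$ are surjective, so there are no $\varprojlim^{1}$ correction terms and compact-support cohomology commutes with the inverse limit in $n$.

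With these vanishings, the long exact sequence collapses to
$$0 \to M_\ell \xrightarrow{\ell} M_\ell \to V_\ell \to 0,$$
which proves part (ii) at once: the induced map $M_\ell/\ell M_\ell \to V_\ell$ is an isomorphism, and Galois equivariance is automatic from the functoriality of the connecting maps. For part (i), the sequence also shows that $M_\ell$ is $\ell$-torsion-free. Since $M_\ell$ is an inverse limit of finite $\ZZ/\ell^n\ZZ$-modules, it is $\ell$-adically separated and complete, and by Lemma~\ref{L:dim 4}(\ref{L:dim 4 b}) the quotient $M_\ell/\ell M_\ell \cong V_\ell$ has $\FF_\ell$-dimension $4$. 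The topological Nakayama lemma then yields a surjection $\ZZ_\ell^4 \twoheadrightarrow M_\ell$; being a surjection from a free $\ZZ_\ell$-module of rank $4$ onto an $\ell$-torsion-free finitely generated $\ZZ_\ell$-module whose mod-$\ell$ reduction has dimension $4$, it must be an isomorphism by the structure theorem for modules over the PID $\ZZ_\ell$.

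The main obstacle is the technical handling of $T_\ell(E)$ as an $\ell$-adic sheaf and the commutation of compact-support cohomology with the inverse limit; these are standard facts in \'etale cohomology (cf.~\cite{MR559531}) but depend crucially on the Mittag--Leffler condition established above. Everything else is a formal consequence of the short exact sequence and the vanishing statements already in hand.
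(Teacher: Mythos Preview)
Your proposal is correct and follows essentially the same approach as the paper: both use the short exact sequence $0 \to T_\ell(E) \xrightarrow{\ell} T_\ell(E) \to E[\ell] \to 0$, apply compact-support cohomology, kill the outer terms via Lemma~\ref{L:H2 vanish}, and read off both parts from the resulting short exact sequence $0 \to M_\ell \to M_\ell \to V_\ell \to 0$. You supply more detail than the paper on the Mittag--Leffler/inverse-limit issue and on why $M_\ell$ is finitely generated (the paper simply asserts freeness from $\ell$-torsion-freeness and reads the rank from $\dim V_\ell$), but the argument is the same in substance.
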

\begin{proof}
The short exact sequence $0 \to T_\ell(E) \xrightarrow{\times\ell} T_\ell(E) \to E[\ell] \to 0$ of sheaves on $U$ gives rise to an exact sequence
\[
H^0_c(U_\Qbar, E[\ell])  \to M_\ell \xrightarrow{\times \ell} M_\ell \to V_\ell \to H^2_c(U_\Qbar, T_\ell(E)).
\]
We have $H^0_c(U_\Qbar, E[\ell])=0$ and $H^2_c(U_\Qbar, T_\ell(E))=\varprojlim_n H^2_c(U_\Qbar, E[\ell^n]) =0$ by Lemma~\ref{L:H2 vanish}.   The short exact sequence $0\to M_\ell \xrightarrow{\times \ell} M_\ell \to V_\ell \to 0$ then gives the desired isomorphism of part (\ref{L:M info b}).   Since multiplication by $\ell$ on $M_\ell$ is injective, we deduce that the $\ZZ_\ell$-module $M_\ell$ is free.    The rank of $M_\ell$ is then equal to the dimension of $M_\ell/\ell M_\ell =V_\ell$, which is $4$ by Lemma~\ref{L:dim 4}.
\end{proof}

Define the ring $R= \ZZ[1/2,1/\ell]$ and the $R$-scheme 
\[
\calU:= \Aff^1_{R}-\{0,1,-1\}=\Spec R[t,(t(t-1)(t+1))^{-1}].
\]  
The equation (\ref{E:main}) defines a relative elliptic curve $\calE\to \calU$.    For each positive integer $n$, let $\calE[\ell^n]$ be the $\ell^n$-torsion subscheme of $\calE$; it is a sheaf of $\ZZ/\ell^n\ZZ$-modules on $\calU$ which is free of rank $2$.    The sheaves $\{\calE[\ell^n]\}_{n\geq 1}$ with the multiplication by $\ell$ morphisms $\calE[\ell^{n+1}]\to \calE[\ell^n]$ form a sheaf of $\ZZ_\ell$-modules on $\calU$ which we denote by $T_\ell(\calE)$.    

Let $j\colon \calU\hookrightarrow \PP^1_{R}$ be the inclusion morphism and let $\pi\colon \PP^1_{R}\to \Spec R$ be the structure map.  Define the sheaf $\calF:= R^1 \pi_*( j_*(T_\ell(\calE)))$ on $\Spec R$ of $\ZZ_\ell$-modules.    Using \cite{MR0354654}*{XVI~Corollaire~2.2}, we find that the sheaf $\calF$ on $\Spec R$ is a lisse sheaf of $\ZZ_\ell$-modules.  The sheaf $\calF$ thus corresponds to a representation $\varrho_{\ell^\infty} \colon \pi_1(\Spec R, \bbar\xi) \to  \Aut_{\ZZ_\ell}( \calF_{\bbar\xi} )$ where $\bbar\xi$ is the geometric point of $\Spec R$ corresponding to a fixed algebraic closure $\Qbar$.  

Base extension gives a morphism $\calE_\QQ \to \calU_\QQ$ that agrees with our earlier morphism $f\colon E\to U$.  The restriction of $T_\ell(\calE)$ to $U=\calU_\QQ$ is our sheaf $T_\ell(E)$.   Therefore, $\calF_{\bbar\xi} = H^1_{\et}(\PP^1_\Qbar, j_*(T_\ell(\calE)))=H^1_{\et}(\PP^1_\Qbar, j_*(T_\ell(E)))$ which is $H^1_c(U_\Qbar, T_\ell(E))=M_\ell$ by Lemma~\ref{L:dim 4}(\ref{L:dim 4 a}).  So we can view $\varrho_{\ell^\infty}$ as a morphism
\[
\varrho_{\ell^\infty} \colon \pi_1(\Spec R, \bbar\xi) \to  \Aut_{\ZZ_\ell}( M_\ell );
\]
it gives rise to a representation $\Gal_\QQ \to \Aut_{\ZZ_\ell}(M_\ell)$ that agrees with $\rho_{\ell^\infty}$ up to conjugacy.  In particular, $\rho_{\ell^\infty}$ is unramified at every prime $p\nmid 2\ell$.\\

We now fix a prime $p\nmid 2\ell$.   Let $s$ be the closed point of $\Spec R$ corresponding to the prime $p$ and let $\bbar{s}$ be the geometric point arising from a fixed algebraic closure $\FFbar_p$.     

Base extending by $s$, we obtain a relative elliptic curve $\calE_{\FF_p} \to \calU_{\FF_p}=:\calU_p$.  The fiber of $\calE_{\FF_p} \to \calU_p$ over the generic point of $\calU_p$ is the elliptic curve $E_p/\FF_p(t)$ of \S\ref{SS:compatibility}.    For each integer $n\geq 1$, let $\calE_{\FF_p}[\ell^n]$ be the $\ell^n$-torsion subscheme of $\calE_{\FF_p}$; it is a sheaf of $\ZZ/\ell^n\ZZ$-modules on $\calU_p$ which is free of rank $2$.  The sheaves $\{\calE_{\FF_p}[\ell^n]\}_{n\geq 1}$ with the multiplication by $\ell$ morphisms $\calE_{\FF_p}[\ell^{n+1}]\to \calE_{\FF_p}[\ell]$ form a sheaf of $\ZZ_\ell$-modules on $\calU_p$ which we denote by $T_\ell(\calE_{\FF_p})$.    The sheaf $T_\ell(\calE_{\FF_p})$ is of course the restriction of $T_\ell(\calE)$ to $\calU_p$.

Take any closed point $x$ of $\calU_p$ and let $\bbar{x}$ be a geometric point extending $x$.   Denote the stalk of $T_{\ell}(\calE_{\FF_p})$ at $\bbar{x}$ by $T_{\ell}(\calE_{\FF_p})_{\bbar{x}}$; it is a free $\ZZ_\ell$-module of rank $2$.    The \emph{geometric} Frobenius $F_x$ acts $\ZZ_\ell$-linearly on the fiber $T_{\ell}(\calE_{\FF_p})_{\bbar{x}}$.     Let $\Frob_x:= F_x^{-1}$ be the arithmetic Frobenius.  One can show that $\det(I- \Frob_x T^{\deg x} |  T_{\ell}(\calE_{\FF_p})_{\bbar{x}})$ equals $L_x(T):= 1 - a_xT^{\deg x}+ p^{\deg x} T^{2\deg x}$ with notation as in \S\ref{SS:compatibility}.  Therefore, $\det(I- F_x T^{\deg x} |  T_{\ell}(\calE_{\FF_p})_{\bbar{x}})$ equals
\[
\det(F_x) T^{2\deg x}\det(I- \Frob_x T^{-\deg x} |  T_{\ell}(\calE_{\FF_p})_{\bbar{x}})= p^{-\deg x} T^{2\deg x} L_x(T^{-1}) = L_x(T/p).
\]
Therefore, $L(T/p,E_p)= \prod_x L_x(T/p)= \prod_x \det(I- F_x T^{\deg x} |  T_{\ell}(\calE_{\FF_p})_{\bbar{x}})^{-1}$ where the product is over the closed points of $\calU_p$.    By the Grothendieck-Lefschetz trace formula, we have
\[
L(T/p,E_p) = {\prod}_i \det(I - \Frob_p^{-1} T | \, H^i_c(\calU_{p,\, \FFbar_p},T_\ell(\calE_{\FF_p}))\otimes_{\ZZ_\ell} \QQ_\ell)^{(-1)^{i+1}}.
\]
The morphism $j$ gives an inclusion morphism $\calU_p\hookrightarrow \PP^1_{\FF_p}$ that we also denote by $j$.   Since $E_p/\FF_p(t)$ has additive reduction at $0$, $1$, $-1$ and $\infty$, we have $j_!(T_\ell(\calE_{\FF_p}))=j_*(T_\ell(\calE_{\FF_p}))$.  Therefore, 
\[
L(T/p,E_p)= {\prod}_i \det(I - \Frob_p^{-1} T | \, H^i_{\et}(\PP^1_{\FFbar_p},j_*(T_\ell(\calE_{\FF_p})))\otimes_{\ZZ_\ell} \QQ_\ell)^{(-1)^{i+1}}.
\]

\begin{lemma} \label{L:App rho inf}
We have $L(T/p,E_p) = \det(I-\rho_{\ell^\infty}(\Frob_p^{-1}) T)$.
\end{lemma}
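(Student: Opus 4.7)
The plan is to use the lisseness of $\calF$ on $\Spec R$ to transfer the characteristic-$p$ cohomology computation back to $M_\ell$ with its $\rho_{\ell^\infty}$-action, and then to kill the $H^0$ and $H^2$ contributions in the Grothendieck--Lefschetz product displayed just above the statement. Combined, these two steps identify the $i=1$ factor with $\det(I-\rho_{\ell^\infty}(\Frob_p^{-1})T)$ and render the $i=0,2$ factors trivial, establishing the lemma.

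For the transport step I appeal to the standard fact that a lisse $\ZZ_\ell$-sheaf on $\Spec R$ has its stalks at any two geometric points canonically isomorphic up to $\pi_1(\Spec R,\bbar\xi)$-conjugacy, and that under such an isomorphism the arithmetic Frobenius $\Frob_p$ acting on $\calF_{\bbar s}$ corresponds to $\rho_{\ell^\infty}(\Frob_p)$ acting on $\calF_{\bbar\xi}=M_\ell$. Proper base change for $R^1\pi_*$ identifies the stalk $\calF_{\bbar s}$ with $H^1_\et(\PP^1_{\FFbar_p}, j_*(T_\ell(\calE_{\FF_p})))$, so after tensoring with $\QQ_\ell$ we obtain
\[
\det(I-\Frob_p^{-1}T \mid H^1_\et(\PP^1_{\FFbar_p}, j_*(T_\ell(\calE_{\FF_p})))\otimes_{\ZZ_\ell}\QQ_\ell) = \det(I-\rho_{\ell^\infty}(\Frob_p^{-1})T).
\]

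For the vanishing of the $H^0$ and $H^2$ factors I would argue exactly as in the proofs of Lemmas~\ref{L:big monodromy} and~\ref{L:H2 vanish}, with the base $\Qbar$ replaced by $\FFbar_p$. Specifically, $H^0$ equals the $\pi_1(\calU_{p,\FFbar_p})$-invariants on the generic stalk of $T_\ell(\calE_{\FF_p})$, which vanish because the mod-$\ell$ monodromy is already full $\SL_2(\FF_\ell)$ by Proposition~2.1 of~\cite{MR757503} (the $j$-invariant of $\calE_{\FF_p}$ is the mod-$p$ reduction of the rational function from Lemma~\ref{L:big monodromy}, with the same orders of poles, and that proposition applies because $\ell\neq 2,p$). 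Then $H^2$ vanishes by Poincar\'e duality together with the Weil-pairing self-duality $T_\ell(\calE_{\FF_p})^\vee(1)\cong T_\ell(\calE_{\FF_p})$, reducing to the previous case. As a sanity check, Lemma~\ref{L:deg 4} tells us $\deg P_p(T)=4=\rank_{\ZZ_\ell} M_\ell$, so the $L$-factor must be an honest degree-$4$ polynomial and no cancellation with the $i=0,2$ terms is possible.

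The main technical obstacle is spelling out the compatibility of the two Frobenius actions on $\calF_{\bbar s}$: one coming from $\pi_1(\Spec R)$ (through which $\rho_{\ell^\infty}$ sees the stalk), and one coming from the arithmetic Frobenius on the cohomology of the fiber $\PP^1_{\FFbar_p}$ (which is what Grothendieck--Lefschetz uses). This is a standard but notationally fiddly compatibility of specialization maps for lisse sheaves, and I would invoke it as a black box from~\cite{MR559531} rather than reprove it.
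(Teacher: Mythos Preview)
Your argument is correct, but you handle the $i\neq 1$ factors differently from the paper. You redo the monodromy and Poincar\'e-duality vanishing directly over $\FFbar_p$, which obliges you to check that Proposition~2.1 of \cite{MR757503} (or an equivalent big-monodromy statement) applies in positive characteristic; that paper works over $\CC$, so strictly speaking you would need a different reference here, though the statement itself is certainly true for $\ell\nmid 2p$. The paper instead observes that $R^i\pi_*(j_*(T_\ell(\calE)))$ is lisse on $\Spec R$ for \emph{every} $i$ (same SGA~4 citation), so the vanishing of its generic stalk---already established over $\Qbar$ in Lemma~\ref{L:H2 vanish}---forces the special stalk $H^i_{\et}(\PP^1_{\FFbar_p}, j_*(T_\ell(\calE_{\FF_p})))$ to vanish as well. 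This transports the characteristic-$0$ computation wholesale and sidesteps any need for a positive-characteristic monodromy input. Your transport step for $i=1$ and the Frobenius compatibility are exactly as in the paper.
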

\begin{proof}
Take any integer $i$ and define the sheaf $\calG= R^i \pi_*(j_*(T_\ell(\calE))$ on $\Spec R$ (we have $\calF=\calG$ when $i=1$).  Using \cite{MR0354654}*{XVI~Corollaire~2.2}, we find that the sheaf $\calG$ on $\Spec R$ is a lisse sheaf of $\ZZ_\ell$-modules (and so is constructible and locally constant).  The stalk of $\calG$ at $\bbar{s}$ is $H^i_{\et}(\PP^1_{\FF_p}, j_*(T_\ell(\calE))=H^i_{\et}(\PP^1_{\FF_p}, j_*(T_\ell(\calE_{\FF_p}))$.  The stalk of $\calG$ at $\bbar\xi$ is $H^i_{\et}(\PP^1_\Qbar,  j_*(T_\ell(\calE))) = H^i_{\et}(\PP^1_\Qbar,  j_*(T_\ell(E)))$.  

If $i\neq 1$, then we have $\calG_{\bbar{\xi}}=0$ by Lemma~\ref{L:H2 vanish}, and hence $H^i_{\et}(\PP^1_{\FF_p}, j_*(T_\ell(\calE_{\FF_p}))=0$ since $\calG$ is locally constant.   Therefore,
\[
L(T/p,E_p) = \det(I - \Frob_p^{-1} T | \, H^1_{\et}(\PP^1_{\FFbar_p},j_*(T_\ell(\calE_{\FF_p})))\otimes_{\ZZ_\ell} \QQ_\ell) = \det(I - \Frob_p^{-1} T | \, \calF_{\bbar{s}} \otimes_{\ZZ_\ell} \QQ_\ell)
\]

Since $\calF$ is locally compatible and constructible, each cospecialization map $\calF_{\bbar s} \to \calF_{\bbar\xi}$ is an isomorphism of $\ZZ_\ell$-modules.   The isomorphism coming from the cospecialization map has a compatible Galois action, i.e., there is a Frobenius automorphism $\Frob_p\in \Gal_\QQ$ such that the action of $\Frob_p$ on $\calF_{\bbar{\xi}}$ corresponds with the action of $\Frob_p \in \Gal(\FFbar_p/\FF_p)$ on $\calF_{\bbar{s}}$.  In particular, $\det(I - \Frob_p^{-1} T | \calF_{\bbar\xi} \otimes_{\ZZ_\ell} \QQ_\ell )=\det(I-\Frob_p^{-1} T | \calF_{\bbar{s}} \otimes_{\ZZ_\ell} \QQ_\ell)$.   Therefore, $L(T/p)$ agrees with $\det(I - \Frob_p^{-1} T | \calF_{\bbar\xi} \otimes_{\ZZ_\ell} \QQ_\ell ) = \det(I-\rho_{\ell^\infty}(\Frob_p^{-1}) T)$.
\end{proof}

Since $M_\ell$ is a free $\ZZ_\ell$-module of rank $4$ by Lemma~\ref{L:M info}(\ref{L:M info a}), Lemma~\ref{L:App rho inf} implies that $L(T,E_p)$ is a polynomial of degree $4$; that it is has integer coefficients and is independent of $\ell$ is clear from the series definition of $L(T,E_p)$.    Define $P_p(T):= L(T/p, E_p)$.

\begin{lemma} \label{L:Pp new}
The series $P_p(T)$ is a polynomial of degree $4$ with coefficients in $\ZZ[1/p]$.  \qed
\end{lemma}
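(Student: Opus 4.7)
The plan is to assemble the pieces already proved in the appendix; the statement is really just the combination of Lemma~\ref{L:App rho inf} with Lemma~\ref{L:M info}(\ref{L:M info a}), cross-referenced with the product definition of $L(T,E_p)$.

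First I would invoke Lemma~\ref{L:App rho inf} to write
\[
P_p(T) = L(T/p,E_p) = \det(I - \rho_{\ell^\infty}(\Frob_p^{-1}) T).
\]
Lemma~\ref{L:M info}(\ref{L:M info a}) says the $\ZZ_\ell$-module $M_\ell$ on which $\rho_{\ell^\infty}(\Frob_p^{-1})$ acts is free of rank $4$, so the right-hand side is a polynomial in $\ZZ_\ell[T]$ of degree at most $4$. To see the degree is exactly $4$, note that $\rho_{\ell^\infty}(\Frob_p^{-1})$ lies in $\Aut_{\ZZ_\ell}(M_\ell)$, so its determinant is a unit in $\ZZ_\ell$ and in particular nonzero; this determinant is, up to sign, the leading coefficient of the characteristic polynomial.

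Next I would compare this with the product definition. By (\ref{L:L ps defn}) we have $L(T,E_p)\in\ZZ[\![T]\!]$, and the computation above, applied for any single prime $\ell\ne p$, shows that this power series is in fact a polynomial of degree $4$ with coefficients in $\QQ_\ell$. An element of $\ZZ[\![T]\!]$ that is a polynomial over some overring is already a polynomial over $\ZZ$ of the same degree, so $L(T,E_p)\in\ZZ[T]$ is a polynomial of degree $4$. Substituting $T\mapsto T/p$ then shows $P_p(T)\in\ZZ[1/p][T]$ is a polynomial of degree $4$, as required.

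There is no real obstacle here beyond bookkeeping: everything substantive has been done in the preceding lemmas of the appendix. The one subtle point that deserves an explicit sentence is the passage from ``polynomial of degree $\le 4$'' to ``polynomial of degree exactly $4$,'' which uses that $\Frob_p^{-1}$ acts as an automorphism rather than just an endomorphism of $M_\ell$; and the passage from $\ZZ_\ell$-coefficients to $\ZZ$-coefficients, which uses that the series coefficients lie in $\ZZ\subseteq\ZZ_\ell$.
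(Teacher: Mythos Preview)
Your proposal is correct and follows essentially the same route as the paper: the paragraph immediately preceding Lemma~\ref{L:Pp new} already records exactly this argument, combining Lemma~\ref{L:App rho inf} with Lemma~\ref{L:M info}(\ref{L:M info a}) and the integrality of the product expansion~(\ref{L:L ps defn}). Your explicit remarks on why the degree is exactly $4$ and on descending from $\ZZ_\ell$-coefficients to $\ZZ$-coefficients are points the paper leaves implicit, but the logic is identical.
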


\begin{lemma} \label{L:inv compatibility}
For each prime $p\nmid 2\ell$, the representation $\rho_\ell$ is unramified at $p$ and satisfies $\det(I-\rho_\ell(\Frob_p^{-1}) T)\equiv P_p(T) \pmod{\ell}$.
\end{lemma}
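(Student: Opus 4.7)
The statement follows by combining several results already established earlier in the appendix, so the plan is essentially to collect them and reduce modulo $\ell$.

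First, I would invoke the factorization of $\rho_{\ell^\infty}$ through the étale fundamental group $\pi_1(\Spec R, \bbar\xi)$ with $R = \ZZ[1/2, 1/\ell]$, which was constructed just above Lemma~\ref{L:App rho inf} using the lisse sheaf $\calF$ on $\Spec R$. Since the representation factors through $\pi_1(\Spec R, \bbar\xi)$, only primes $p$ dividing $2\ell$ can ramify, so $\rho_{\ell^\infty}$ is unramified at every prime $p\nmid 2\ell$. Reducing modulo $\ell$ preserves this property, so $\rho_\ell$ is unramified at $p\nmid 2\ell$ as well (using Lemma~\ref{L:M info}(\ref{L:M info b}) to identify the reduction, as discussed below).

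Next, I would use Lemma~\ref{L:M info}(\ref{L:M info b}), which gives a $\Gal_\QQ$-equivariant isomorphism $M_\ell/\ell M_\ell \cong V_\ell$. This means that the representation $\rho_\ell$ is canonically the reduction of $\rho_{\ell^\infty}$ modulo $\ell$. Consequently, for any $\sigma\in \Gal_\QQ$ acting on $M_\ell$, the characteristic polynomial of $\rho_{\ell^\infty}(\sigma)$ reduces modulo $\ell$ to the characteristic polynomial of $\rho_\ell(\sigma)$.

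Finally, I would apply this with $\sigma = \Frob_p^{-1}$ for a prime $p\nmid 2\ell$, yielding
\[
\det(I - \rho_\ell(\Frob_p^{-1}) T) \equiv \det(I - \rho_{\ell^\infty}(\Frob_p^{-1}) T) \pmod{\ell}.
\]
By Lemma~\ref{L:App rho inf}, the right-hand side equals $L(T/p, E_p) = P_p(T)$, which lies in $\ZZ[1/p][T]$ by Lemma~\ref{L:Pp new}; since $p\neq \ell$, the reduction modulo $\ell$ makes sense. This yields the desired congruence.

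There is essentially no obstacle here; the work has already been done in the preceding lemmas of the appendix, and this final lemma is a bookkeeping step. The only point requiring a bit of care is the sign/direction convention: the appendix has set up $F_x = \Frob_x^{-1}$ as the geometric Frobenius, and the Grothendieck--Lefschetz formula used in Lemma~\ref{L:App rho inf} naturally produces $\Frob_p^{-1}$ (the geometric Frobenius at $p$) in the characteristic polynomial, which is precisely what appears in the statement of the lemma.
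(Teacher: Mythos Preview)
Your proposal is correct and follows essentially the same approach as the paper: the paper's proof is the one-line remark ``This follows immediately from Lemmas~\ref{L:App rho inf} and \ref{L:M info},'' and you have accurately unpacked that into the unramifiedness (via the factorization through $\pi_1(\Spec R,\bbar\xi)$ established just before Lemma~\ref{L:App rho inf}) together with the reduction-mod-$\ell$ step afforded by Lemma~\ref{L:M info}(\ref{L:M info b}).
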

\begin{proof}
This follows immediately from Lemmas~\ref{L:App rho inf} and \ref{L:M info}.
\end{proof}

\subsection{Proof of Lemma~\ref{L:basics}}

Part (\ref{I:dim 4}) is Lemma~\ref{L:dim 4}(\ref{L:dim 4 b}).  We now prove (\ref{I:pairing}).  The Weil pairing gives an alternating non-degenerate pairing $E[\ell] \times E[\ell] \to \FF_\ell(1)$ of sheaves on $U$.    It extends to an alternating non-degenerate pairing
\begin{equation} \label{E:F pairing}
j_*(E[\ell]) \times j_*(E[\ell]) \to j_*(\FF_\ell(1)) \cong \FF_\ell(1).
\end{equation}
Composing the cup product 
\[
H^1_{\et}(\PP^1_{\Qbar},j_*(E[\ell])  ) \times H^1_{\et}(\PP^1_{\Qbar}, j_*(E[\ell])  ) \xrightarrow{\cup} H^2_{\et}(\PP^1_{\Qbar},j_*(E[\ell])  \otimes j_*(E[\ell]) )
\]
with the homomorphism $H^2_{\et}(\PP^1_{\Qbar},j_*(E[\ell])  \otimes j_*(E[\ell]) ) \to H^2_{\et}(\PP^1_{\Qbar}, \FF_\ell(1)) \cong \FF_\ell$ arising from (\ref{E:F pairing}), we obtain a pairing
\[
\ang{\,}{\,} \colon V_\ell \times V_\ell \to \FF_\ell;
\]
note that $V_\ell=H^1_{\et}(\PP^1_{\Qbar},j_*(E[\ell])$ by Lemma~\ref{L:dim 4}.  We have $\ang{\sigma(v)}{\sigma(w)} = \sigma( \ang{v}{w})=\ang{v}{w}$ for all $v,w\in V_\ell$ and $\sigma\in \Gal_\QQ$.  The pairing $\ang{\,}{\,}$ is symmetric; observe that the cup-product and (\ref{E:F pairing}) are both alternating.   The pairing $\ang{\,}{\,}$ is also non-degenerate; this follows from Poincar\'e duality, cf.~\cite{MR559531}*{V~Proposition~2.2(b)}.   

\subsection{Proof of Lemmas~\ref{L:SO image}, \ref{L:deg 4} and \ref{L:explicit}}

\begin{lemma} \label{L:FE basic}
\begin{romanenum}
\item
For each odd prime $p$, we have $T^4 P_p(1/T)= \varepsilon_p \cdot P_p(T)$ for a unique $\varepsilon_p \in \{\pm 1\}$.
\item
For each odd prime $\ell$ and prime $p\nmid 2\ell$, we have $\det(\rho_\ell(\Frob_p))=\varepsilon_p$.
\end{romanenum}
\end{lemma}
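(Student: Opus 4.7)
The plan is to reduce both parts to a single linear-algebra observation about orthogonal transformations, combined with the orthogonality of $\rho_\ell$ from Lemma~\ref{L:basics}(\ref{I:pairing}) and the compatibility recorded in Lemma~\ref{L:inv compatibility}.

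First I would record the following linear-algebra fact: if $A$ acts on a $4$-dimensional vector space preserving a non-degenerate symmetric bilinear form, then $A$ is conjugate to $A^{-1}$ via the form, so the two share the same characteristic polynomial and in particular the multiset of eigenvalues of $A$ is invariant under $\lambda \mapsto \lambda^{-1}$. A short computation using $\det(I - AT) = \prod(1-\mu_i T)$ then gives
\[ T^4 \det(I - A/T) \;=\; \det(A) \cdot \det(I - AT), \]
with $\det(A) \in \{\pm 1\}$.

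Next I would apply this to $A = \rho_\ell(\Frob_p^{-1})$, which lies in $\Or(V_\ell)$ by Lemma~\ref{L:basics}(\ref{I:pairing}). Using $\det(\rho_\ell(\Frob_p^{-1})) = \det(\rho_\ell(\Frob_p))$ together with the congruence $\det(I - \rho_\ell(\Frob_p^{-1}) T) \equiv P_p(T) \pmod{\ell}$ from Lemma~\ref{L:inv compatibility}, one obtains
\[ T^4 P_p(1/T) \;\equiv\; \det(\rho_\ell(\Frob_p)) \cdot P_p(T) \pmod{\ell} \]
for every odd prime $\ell \neq p$. Writing $P_p(T) = \sum_{i=0}^4 a_i T^i$ with $a_0 = 1$, the constant coefficient of this congruence reads $a_4 \equiv \det(\rho_\ell(\Frob_p)) \pmod{\ell}$.

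The main step is then to upgrade these mod $\ell$ identities to identities over $\ZZ[1/p]$. The fixed rational number $a_4$ reduces to an element of $\{\pm 1\}$ modulo every odd prime $\ell \neq p$, which forces $a_4 \in \{\pm 1\}$; I would then set $\varepsilon_p := a_4$. The congruence $a_{4-i} \equiv \varepsilon_p \cdot a_i \pmod{\ell}$ then holds for all such $\ell$ and every $i$, forcing equality in $\ZZ[1/p]$ and establishing~(i); uniqueness of $\varepsilon_p$ is clear since $P_p(0) = 1 \neq 0$. Part~(ii) follows at once: we have already shown $\det(\rho_\ell(\Frob_p)) \equiv \varepsilon_p \pmod{\ell}$, and since both sides lie in $\{\pm 1\}$ and $\ell$ is odd, they must be equal as integers. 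There is no serious obstacle once Lemma~\ref{L:basics}(\ref{I:pairing}) and Lemma~\ref{L:inv compatibility} are available; the only minor subtlety is the pigeonhole step that passes from the family of mod $\ell$ congruences to a rational identity.
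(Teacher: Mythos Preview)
Your proposal is correct and follows essentially the same route as the paper's proof: both use the linear-algebra identity $T^4\det(I-A T^{-1})=\det(A)\det(I-AT)$ for $A\in\Or(V_\ell)$, combine it with Lemma~\ref{L:basics}(\ref{I:pairing}) and Lemma~\ref{L:inv compatibility} to obtain $T^4 P_p(1/T)\equiv \det(\rho_\ell(\Frob_p))\cdot P_p(T)\pmod{\ell}$, and then vary $\ell$ to upgrade the congruence to an identity in $\ZZ[1/p]$. The only cosmetic difference is that you spell out the passage to a rational identity coefficient by coefficient, whereas the paper does it in one line by pigeonhole on the sign.
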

\begin{proof}
Recall that $\rho_\ell(\Gal_\QQ)\subseteq \Or(V_\ell)$ by Lemma~\ref{L:basics}(\ref{I:pairing}) and $\dim_{\FF_\ell} V_\ell =4$.   So for each $A\in \Or(V_\ell)$, we have $T^4\det(I-A T^{-1}) = \det(A)\cdot \det(I-AT)$ where $\det(A)\in\{\pm 1\}$.    In particular, for each prime $p\nmid 2\ell$, we have $T^4\det(I-\rho_\ell(\Frob_p^{-1}) T^{-1}) = \det(\rho_\ell(\Frob_p^{-1})) \cdot \det(I-\rho_\ell(\Frob_p^{-1}) T)$.  By Lemma~\ref{L:inv compatibility}, we have $T^4 P_p(T^{-1}) \equiv \det(\rho_\ell(\Frob_p)^{-1})\cdot  P_p(T) \pmod{\ell}$.   

Since $T^4 P_p(T^{-1})$ is congruent to $P_p(T)$ or $-P_p(T)$ for infinitely many primes $\ell$, we have $T^4 P_p(T) = \varepsilon_p P_p(T)$ for a unique $\varepsilon_p \in \{\pm 1\}$.   Reducing modulo $\ell$, we find that $\det(\rho_\ell(\Frob_p))= \varepsilon_p^{-1}=\varepsilon_p$.
\end{proof}

For each odd prime $\ell$, define the character  $\alpha_\ell \colon \Gal_\QQ \to \{\pm 1\}$, $\sigma \mapsto \det(\rho_{\ell}(\sigma^{-1}))$.  By Lemma~\ref{L:FE basic}, we have $\alpha_\ell(\Frob_p)= \varepsilon_p$ for all $p\nmid 2\ell$.  By the Chebotarev density theorem, we find that $\alpha_\ell$ is a character $\alpha\colon \Gal_\QQ \to \{\pm 1\}$ that does not depend on $\ell$.   Since $\alpha=\alpha_\ell$ is unramified at $p\nmid 2\ell$ for all odd $\ell$, we deduce that $\alpha$ is unramified at all odd $p$.      Let $K$ be the fixed field of $\ker(\alpha)$ in $\Qbar$; it is unramified at odd primes and satisfies $[K:\QQ]\leq 2$.   So $K$ is $\QQ$, $\QQ(i)$, $\QQ(\sqrt{2})$ and $\QQ(\sqrt{-2})$.

By Lemma~\ref{L:Pp new}, $P_p(T)$ is a polynomial of degree $4$.  So to prove Lemma~\ref{L:SO image} and \ref{L:deg 4}, it suffices to show that $\alpha=1$.  If $\alpha \neq 1$, then $K\neq \QQ$ and hence $3$ or $5$ is inert in $K\in\{\QQ(i),\QQ(\sqrt{2}),\QQ(\sqrt{-2})\}$.    So if $\alpha\neq 1$, then $\varepsilon_3=\alpha(\Frob_3)$ or $\varepsilon_5=\alpha(\Frob_5)$ is $-1$.  Therefore, Lemmas~\ref{L:SO image} and \ref{L:deg 4} will follow from Lemma~\ref{L:explicit}.\\

We now sketch Lemma~\ref{L:explicit}.  Computing the product (\ref{L:L ps defn}) with the closed points $x$ with $\deg x \leq 2$, we find that $L(T,E_3)= 1 -2 T^2 +O(T^3)$ and $L(T,E_5)=1-4T+54T^2 +O(T^3)$.  Therefore, $P_3(T)= 1 -2/9\cdot  T^2 +O(T^3)$ and $P_5(T)=1-4/5\cdot T+54/25\cdot T^2 +O(T^3)$.    Since the coefficients of $T^2$ in $P_3(T)$ and $P_5(T)$ are non-zero, we have $\varepsilon_3=\varepsilon_5=1$ and hence the polynomials $P_3(T)$ and $P_5(T)$ are reciprocal of degree $4$.   Therefore, $P_3(T)= 1 -2/9\cdot  T^2 + T^4$ and $P_5(T)=1-4/5\cdot T+54/25\cdot T^2 -4/5\cdot T^3 +T^4 = (1 - 2/5\cdot T + T^2)^2$.   We have also verified these examples with \texttt{Magma}'s function \texttt{LFunction} \cite{Magma}.

\subsection{Proof of Lemma~\ref{L:compatibility}} 
Take any prime $p\nmid 2\ell$.  By Lemma~\ref{L:inv compatibility}, $\rho_\ell$ is unramified at $p$ and  $\det(I-\rho_\ell(\Frob_p^{-1}) T)\equiv P_p(T) \pmod{\ell}$.     We have $\det(I-A^{-1} T) = \det(I-A)$ for all $A\in \SO(V_\ell)$.  So by Lemma~\ref{L:SO image}, we have $\det(I-\rho_\ell(\Frob_p) T)\equiv \det(I-\rho_\ell(\Frob_p^{-1}) T)\equiv P_p(T) \pmod{\ell}$.

\subsection{Proof of Lemma~\ref{L:X connection}}

Let $\NS(X)$ be the N\'eron-Severi group of $X_{\Qbar}$.    Let $\calT$ be the subgroup of $\NS(X)$ generated by a section of $\tilde{f}$, a non-singular fiber of $\tilde{f}$ over a rational point of $\PP^1_\QQ$, and the irreducible components of the singular fibers of $\tilde{f}$.    There is a natural $\Gal_\QQ$-action on $\NS(X)$ that preserves $\calT$.

\begin{lemma} \label{L:Galois on singular fibers}
\begin{romanenum}
\item \label{L:Galois on singular fibers a}
The group $\calT$ is free abelian of rank $30$.
\item \label{L:Galois on singular fibers b}
The group $\Gal_\QQ$ acts trivially on $\calT$.
\end{romanenum}
\end{lemma}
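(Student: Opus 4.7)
I will prove the two parts by independent arguments: part (i) via a Tate-algorithm identification of Kodaira types followed by a Shioda--Tate rank count, and part (ii) by exploiting the full rational $2$-torsion of $E_\eta/\QQ(t)$. For (i), I will run Tate's algorithm on the alternative Weierstrass model $y^2 = x^3 + (t^5-t)x^2 + (t^8-2t^6+t^4)x$ from the proof of Proposition~\ref{P:Caruso}, whose discriminant is $16 t^{10}(t-1)^8(t+1)^8$. Since the minimal Weierstrass model on $\PP^1_\QQ$ has $\chi(\OO_X)=3$ (forced by $\deg a_2 = 5$) and for an elliptic surface with section the total discriminant degree equals $12\chi(\OO_X)$, the discriminant valuation at $t=\infty$ must be $36-26=10$. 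Combined with the pole orders $4,2,2,4$ of $j(t)$ at $t=0,1,-1,\infty$, Tate's algorithm will pin down the Kodaira types as $I_4^*, I_2^*, I_2^*, I_4^*$ respectively, accounting for $9+7+7+9=32$ singular-fiber components. The group $\calT$ thus has $34$ generators subject to four obvious relations (the fiber-as-divisor at each singular point equals $F$ in $\NS$). A standard Shioda--Tate argument---removing one multiplicity-one corner from each singular fiber and observing that the intersection form on the remaining $30$ generators is block-diagonal, with a hyperbolic plane on $\{O,F\}$ and $-\mathrm{Cart}(D_{n+4})$ (negative definite) on each chopped $I_n^*$ piece---will show these are the only relations, yielding rank exactly $30$. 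Freeness then follows from torsion-freeness of $\NS(X_\Qbar)$, which holds because $h^1(X,\OO_X)=0$ for our elliptic surface with section.

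For (ii), the section and the nonsingular fiber $F$ are visibly $\Gal_\QQ$-stable, so the content is whether $\Gal_\QQ$ permutes components within a singular fiber. The decisive input is that $E_\eta/\QQ(t)$ has full $2$-torsion rational over $\QQ(t)$: the substitution $V=[t(t-1)(t+1)]^2 y$, $U=t(t-1)(t+1)x$ converts~(\ref{E:main}) into the Weierstrass form
\[
V^2 = U\bigl(U+t(t-1)(t+1)\bigr)\bigl(U+t^3(t-1)(t+1)\bigr),
\]
exhibiting all three nontrivial $2$-torsion points in $E_\eta(\QQ(t))$. Each extends to a section of $\tilde f$ defined over $\QQ$ and meets a unique multiplicity-one ``corner'' component of each $I_n^*$ fiber. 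Since the component group $\Phi_v$ of each $I_{2k}^*$ fiber is $(\ZZ/2\ZZ)^2$, and since the specialization $E_\eta[2]\to\Phi_v$ at each bad $v$ will be checked (case by case, using the explicit Tate-algorithm equations) to be an isomorphism, the zero section together with the three nontrivial $2$-torsion sections meet all four corner components bijectively. Hence every corner component is individually $\Gal_\QQ$-fixed. The remaining chain of $n+1$ multiplicity-two components then inherits Galois-triviality by induction along the chain: its two endpoints are distinguished by being adjacent to the two already-fixed corner pairs, and each interior component has a unique pair of chain neighbors.

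The main obstacle I expect is the Kodaira-type identification in part (i)---running Tate's algorithm at all four bad places (including the coordinate change $s=1/t$ at $\infty$) is routine but calculation-intensive. A secondary subtlety in (ii) is the surjectivity of the $2$-torsion specialization onto each component group, without which the four sections could conceivably all meet the identity component; this will require verifying, from the Tate-algorithm equations, that the three nontrivial $2$-torsion sections in fact detect the three nontrivial elements of $(\ZZ/2\ZZ)^2$ at each bad fiber.
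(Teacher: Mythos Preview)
Your proposal is correct. Part~(i) matches the paper in substance: both identify the Kodaira types as $I_4^*, I_2^*, I_2^*, I_4^*$ via Tate's algorithm, and your explicit Shioda--Tate count is precisely what the paper's citation to Shioda's equation~(1.3) encodes. One small caution: ``$\chi(\OO_X)=3$ forced by $\deg a_2=5$'' strictly yields only $\chi\geq 3$; you should close this either by running Tate directly at $\infty$, or by noting that the order-$4$ pole of $j$ there, together with $\chi\geq 3$, forces the fiber to be $I_4^*$ and hence $\chi=3$ exactly.

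For part~(ii) you take a genuinely different route. The paper simply observes that when one runs Tate's algorithm at each of $0,\pm1,\infty$, every blow-up center is a $\QQ$-rational point, so the four multiplicity-one ``corner'' components of each $\widetilde D$-fiber are individually defined over $\QQ$; since any automorphism of the $\widetilde D_m$ dual graph that fixes the four extremal nodes is the identity, this already forces trivial Galois action on the whole fiber. Your argument instead uses the full rational $2$-torsion: the zero section and the three $\QQ(t)$-rational $2$-torsion sections meet the four corners, and you verify case by case that the specialization $E_\eta[2]\to\Phi_v\cong(\ZZ/2\ZZ)^2$ is bijective, so each corner is singled out as the landing component of a $\QQ$-rational section. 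Both arguments are valid. The paper's is shorter, because the rationality of the corners is visible directly in the Tate-algorithm equations you are already computing for part~(i), without the extra bookkeeping of tracking where the torsion sections land. Your route, on the other hand, gives a structural explanation for the rationality and would transfer more cleanly to situations where one knows the Mordell--Weil torsion but does not wish to write out explicit resolutions.
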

\begin{proof}
The singular fibers of $\tilde f \colon X \to \PP^1_\QQ$ are at $0$, $1$, $-1$ and $\infty$.   Fix $s\in \{0,1,-1,\infty\}$, and let $\calI_s$ be the set of irreducible components (defined over $\Qbar$) of the fiber $\tilde f^{-1}(s)$ of $\tilde{f}$ over $s$.   There is a natural action of $\Gal_\QQ$ on $\calI_s$; to prove (\ref{L:Galois on singular fibers b}) it suffices to show that this action is trivial.    

By Tate's algorithm, the fiber $\tilde{f}^{-1}(s)$ of $\tilde{f}$ at $s$ is of Kodaira type $\text{I}^*_4$ if $s\in \{0,\infty\}$ and of Kodaira type $\text{I}^*_2$ if $s\in\{1,-1\}$.   The Dykin diagram corresponding to the intersection matrix of $\calI_s$ is of type $D_6$ or $D_8$.   We find that $\Gal_\QQ$ acts trivially on $\calI_s$ if and only if $\Gal_\QQ$ acts trivially on the $4$ elements of $\calI_s$ that occur with multiplicity $1$ in $\tilde{f}^{-1}(s)$.    Using Tate's algorithm, one can show that the four irreducible components of $\tilde{f}^{-1}(s)$ with multiplicity $1$ are all defined over $\QQ$.

Part (\ref{L:Galois on singular fibers a}) follows from equation (1.3) of \cite{MR1211006}.  
\end{proof}

 Let $\gamma\colon  \NS(X) \to H^2_{\et}(X_\Qbar, \QQ_\ell(1))$ be the cycle map; it is a homomorphism that respects the natural $\Gal_\QQ$-actions.   As explained in \S1 of \cite{MR1211006}, there is a $\QQ_\ell[\Gal_\QQ]$-submodule $\calV_\ell$ such that 
 \begin{equation} \label{E:H2 last}
 H^2_{\et}(X_\Qbar, \QQ_\ell(1)) =  \calV_\ell \oplus (\gamma(\calT) \otimes_{\ZZ_\ell} \QQ_\ell) \cong \calV_\ell \oplus \QQ_\ell^{30};
 \end{equation}
 the last isomorphism uses Lemma~\ref{L:Galois on singular fibers}.  By Proposition~1 of \cite{MR1211006}, the vector space $\calV_\ell$ has dimension $4$ over $\QQ_\ell$.
 
\begin{lemma} \label{L:free over Zl}
The $\ZZ_\ell$-module $H^2_{\et}(X_\Qbar,\ZZ_\ell)$ is free.
\end{lemma}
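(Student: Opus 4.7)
The plan is as follows. Since $H^2_{\et}(X_\Qbar,\ZZ_\ell)$ is a finitely generated $\ZZ_\ell$-module, it is free if and only if it has no $\ell$-torsion. The short exact sequence $0 \to \ZZ_\ell \xrightarrow{\ell} \ZZ_\ell \to \FF_\ell \to 0$ of sheaves on $X_\Qbar$ produces a long exact cohomology sequence, from which one reads off that $H^2_{\et}(X_\Qbar,\ZZ_\ell)[\ell]$ is a quotient of $H^1_{\et}(X_\Qbar,\FF_\ell)$. It therefore suffices to prove $H^1_{\et}(X_\Qbar,\FF_\ell)=0$.

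To compute $H^1_{\et}(X_\Qbar,\FF_\ell)$, I would apply the Leray spectral sequence to $\tilde f\colon X\to \PP^1_\QQ$ (after base change to $\Qbar$). Its low-degree terms give an exact sequence
\[
0 \to H^1(\PP^1_\Qbar, \tilde f_*\FF_\ell) \to H^1(X_\Qbar,\FF_\ell) \to H^0(\PP^1_\Qbar, R^1\tilde f_*\FF_\ell) \to H^2(\PP^1_\Qbar, \tilde f_*\FF_\ell).
\]
Because $\tilde f$ is proper with geometrically connected fibers (the smooth fibers are elliptic curves, and each fiber of Kodaira type $I_n^*$ is a connected configuration of rational curves), the pushforward $\tilde f_*\FF_\ell$ is the constant sheaf $\FF_\ell$ on $\PP^1_\Qbar$. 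Consequently $H^1(\PP^1_\Qbar,\tilde f_*\FF_\ell)=H^1(\PP^1_\Qbar,\FF_\ell)=0$, and $H^1(X_\Qbar,\FF_\ell)$ embeds into $H^0(\PP^1_\Qbar, R^1\tilde f_*\FF_\ell)$.

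It remains to show that $H^0(\PP^1_\Qbar, R^1\tilde f_*\FF_\ell)=0$. The constructible sheaf $R^1\tilde f_*\FF_\ell$ restricted to $U_\Qbar$ is, by proper and smooth base change applied to the smooth part of the fibration, the rank-two $\FF_\ell$-local system whose stalk at $\bar\eta$ is $H^1_{\et}(E_{\bar\eta},\FF_\ell)$; by the Weil pairing this is isomorphic, as a $\pi_1(U_\Qbar,\bar\eta)$-module, to the dual of $E[\ell]_{\bar\eta}$, so the monodromy factors through the contragredient of the representation $\beta_\ell$ of the Appendix. A global section of $R^1\tilde f_*\FF_\ell$ on $\PP^1_\Qbar$ restricts to a global section on the dense open $U_\Qbar$, which is a monodromy-invariant vector in the stalk. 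By Lemma~\ref{L:big monodromy}, the geometric monodromy image equals $\SL_2(\FF_\ell)$, and its standard representation on $\FF_\ell^2$ (and its dual) has no nonzero fixed vector; hence $H^0(\PP^1_\Qbar, R^1\tilde f_*\FF_\ell)=0$, completing the proof.

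The principal inputs — big geometric monodromy and the constant-sheaf identification $\tilde f_*\FF_\ell=\FF_\ell$ — are already established or immediate from the geometry of $\tilde f$, so the argument is essentially a short diagram chase. The only point that requires minor care is the identification of the restriction of $R^1\tilde f_*\FF_\ell$ to $U_\Qbar$ with the dual of the $\ell$-torsion local system on the smooth locus, which is precisely where proper and smooth base change enter; once this is in hand, the monodromy input of Lemma~\ref{L:big monodromy} kills all possible global sections.
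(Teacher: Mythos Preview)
Your strategy is sound and gives a genuine alternative to the paper's proof, but there is one step that needs an extra line.

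The paper argues via the comparison isomorphism: choosing an embedding $\Qbar\hookrightarrow\CC$, it reduces to showing that the singular cohomology $H^2(X(\CC),\ZZ)$ is torsion-free, and this it reads off from a result of Cox--Zucker on elliptic fibrations. Your argument stays entirely in the \'etale world, replacing the topological input by the big-monodromy Lemma~\ref{L:big monodromy}; this is more self-contained relative to the rest of the paper (that lemma is already proved and used elsewhere) and works just as well once the point below is addressed.

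The missing step is at the very end. You show that any global section of $R^1\tilde f_*\FF_\ell$ on $\PP^1_\Qbar$ restricts to zero on $U_\Qbar$, and then conclude $H^0(\PP^1_\Qbar, R^1\tilde f_*\FF_\ell)=0$. But restriction to a dense open is not in general injective on global sections of a constructible sheaf: a skyscraper supported on $\PP^1_\Qbar-U_\Qbar$ already gives a counterexample. What saves you here is that the stalks of $R^1\tilde f_*\FF_\ell$ at the four points of $\PP^1_\Qbar-U_\Qbar$ vanish: by proper base change the stalk at such a geometric point $s$ is $H^1_{\et}(X_s,\FF_\ell)$, and the fiber $X_s$ (of Kodaira type $I_2^*$ or $I_4^*$) is a tree of rational curves, hence simply connected. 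Once you insert this observation, the restriction map on $H^0$ is injective and your argument is complete.
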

\begin{proof}
By choosing an embedding $\Qbar\hookrightarrow \CC$ and using the comparison isomorphism, we need only show that $H^2(X(\CC),\ZZ_\ell)=H^2(X(\CC),\ZZ)\otimes_\ZZ \ZZ_\ell$ is a free $\ZZ_\ell$-module.  By applying Corollary~1.48 of \cite{MR538682} to the elliptic fibration $X(\CC)\to \PP^1(\CC)$ arising from $\tilde f$, we find that $H^2(X(\CC),\ZZ)$ is a free abelian group. 
\end{proof} 

By Lemma~\ref{L:free over Zl}, the $\ZZ_\ell$-module $H^2_{\et}(X_\Qbar,\ZZ_\ell(1))$ is free.   So from (\ref{E:H2 last}), we deduce that the semi-simplification of $H^2_{\et}(X_\Qbar, \FF_\ell(1))$ as an $\FF_\ell[\Gal_\QQ]$ is isomorphic to $\bbar{\calV}_\ell \oplus \FF_\ell^{30}$  where $\bbar{\calV}_\ell$ has dimension $4$ over $\FF_\ell$.   The lemma is then a consequence of the following.

\begin{lemma}
The $\FF_\ell[\Gal_\QQ]$-modules $V_\ell$ and $\bbar{\calV}_\ell$ are isomorphic.
\end{lemma}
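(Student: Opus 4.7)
The plan is to identify $V_\ell$ with the transcendental summand of $H^2_{\et}(X_\Qbar,\FF_\ell(1))$ via the Leray spectral sequence for $\tilde f\colon X\to \PP^1_\QQ$, and then match characteristic polynomials of Frobenius with $\bbar{\calV}_\ell$.

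First I would study the sheaf $R^1\tilde f_*\FF_\ell(1)$ on $\PP^1_\Qbar$. On $U$ the Weil pairing yields $R^1 f_*\FF_\ell\cong E[\ell]^\vee(-1)$, hence $R^1 f_*\FF_\ell(1)|_U\cong E[\ell]$. The singular fibers of $\tilde f$ are of Kodaira types $I_2^*$ and $I_4^*$ (as noted in the proof of Lemma~\ref{L:dim 4}), both simply-connected trees of rational curves, so $H^1(F_s,\FF_\ell(1))=0$ for $s\in\{0,1,-1,\infty\}$. Combined with Lemma~\ref{L:dim 4}(i) this identifies $R^1\tilde f_*\FF_\ell(1)=j_*E[\ell]$ on all of $\PP^1_\Qbar$, so the Leray $E_2$-term reads
\[
E_2^{1,1}=H^1_{\et}(\PP^1_\Qbar, j_*E[\ell])=V_\ell.
\]
The other $E_2$-contributions to $H^2$, namely $E_2^{2,0}=H^2_{\et}(\PP^1_\Qbar,\FF_\ell(1))\cong\FF_\ell$ (a section of $\tilde f$) and $E_2^{0,2}=H^0_{\et}(\PP^1_\Qbar,R^2\tilde f_*\FF_\ell(1))$ (the fiber class together with the classes of individual components of singular fibers), are trivial $\Gal_\QQ$-modules by Lemma~\ref{L:Galois on singular fibers}(ii) of total dimension $30$, recovering the algebraic summand $\gamma(\calT)\otimes\FF_\ell$.

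Next I would verify that the spectral sequence degenerates at $E_2$ in positions contributing to $H^2$. The only potentially nonzero differential into those positions is $d_2\colon E_2^{0,1}\to E_2^{2,0}$, and $E_2^{0,1}=H^0_{\et}(\PP^1_\Qbar,j_*E[\ell])=H^0_c(U_\Qbar,E[\ell])=0$ by Lemma~\ref{L:H2 vanish}. Hence $H^2_{\et}(X_\Qbar,\FF_\ell(1))$ carries a three-step filtration with graded pieces $\FF_\ell$, $V_\ell$, and $\FF_\ell^{29}$, so its semi-simplification is $V_\ell^{\mathrm{ss}}\oplus\FF_\ell^{30}$. Finally I would compare characteristic polynomials of $\Frob_p$ for every $p\nmid 2\ell$: Lemma~\ref{L:compatibility} gives $\det(I-\Frob_p T\mid V_\ell)\equiv P_p(T)\pmod\ell$, while the construction of $\bbar{\calV}_\ell$ as the non-trivial summand of the semi-simplification of $H^2_{\et}(X_\Qbar,\FF_\ell(1))$, matched with the Grothendieck--Lefschetz bookkeeping underlying Lemma~\ref{L:App rho inf}, yields the same polynomial. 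Chebotarev density together with Brauer--Nesbitt then gives $V_\ell^{\mathrm{ss}}\cong\bbar{\calV}_\ell$; since $V_\ell$ contains no trivial subquotient (as $P_3(1)=16/9\not\equiv 0\pmod\ell$ for $\ell\geq 11$ by Lemma~\ref{L:explicit}) and both modules have dimension $4$, the desired isomorphism follows (read, if needed, at the level of semi-simplifications, which is what Lemma~\ref{L:X connection} actually consumes).

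The main obstacle is the sheaf identification $R^1\tilde f_*\FF_\ell(1)=j_*E[\ell]$ on $\PP^1_\Qbar$: this hinges on the geometric fact that the $I_n^*$ singular fibers are simply-connected trees of rational curves with vanishing $H^1$, which must be verified via Tate's algorithm on the explicit Weierstrass equation~(\ref{E:main}). A secondary technical point is ensuring that the trivial Leray graded pieces truly sit inside $\gamma(\calT)\otimes\FF_\ell$ (so that the rank count $30$ matches on the nose), which reduces to the standard identification of $E_2^{0,2}$ with the span of components of singular fibers modulo the total fiber class.
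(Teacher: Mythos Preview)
Your approach is essentially the same as the paper's: both use the Leray spectral sequence for $\tilde f$ and the identification $R^1\tilde f_*\FF_\ell(1)\cong j_*E[\ell]$ (via simple connectedness of the $I_n^*$ fibers) to realize $V_\ell$ as the $E_2^{1,1}$-subquotient of $H^2_{\et}(X_\Qbar,\FF_\ell(1))$.

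The paper is more economical in two respects. First, it does not attempt to compute the other Leray graded pieces or verify full degeneration; it only needs $E_\infty^{1,1}=E_2^{1,1}$, which follows immediately from $E_2^{p,q}=0$ for $p>2$. Your degeneration check is incomplete as written (you handle $d_2\colon E_2^{0,1}\to E_2^{2,0}$ but not $d_2\colon E_2^{0,2}\to E_2^{2,1}$, though the latter target vanishes by Lemma~\ref{L:H2 vanish}). Second, once $V_\ell$ is known to be a subquotient of $H^2_{\et}(X_\Qbar,\FF_\ell(1))$, the paper dispenses with Brauer--Nesbitt entirely: since the characteristic polynomial of $\rho_\ell(\Frob_3)$ is separable with $1$ not a root (discriminant $2^{16}5^2/3^8$, so this holds for $\ell\geq 7$), $V_\ell$ is semisimple with $V_\ell^{\Gal_\QQ}=0$, and hence its composition factors lie in $\bbar{\calV}_\ell$ rather than in the trivial summand $\FF_\ell^{30}$; equality follows by dimension. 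Your Chebotarev/Brauer--Nesbitt comparison is therefore redundant, and your hedge ``at the level of semi-simplifications'' is unnecessary once you invoke the separability of $\det(I-\rho_\ell(\Frob_3)T)$ to get genuine semisimplicity of $V_\ell$.
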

\begin{proof}
Using Lemma~\ref{L:explicit}, we have $\det(I - \rho_{\ell}(\Frob_3)) \equiv 1 -2/9 \cdot T^2 + T^4 \pmod{\ell}$.   The discriminant of $1 -2/9 \cdot T^2 + T^4$ is $2^{16} 5^2/3^8$.  Since $\ell\geq 7$, the polynomial $1 -2/9 \cdot T^2 + T^4 \in \FF_\ell[T]$ is separable and $1$ is not a root.   Therefore, $V_\ell$ is a semi-simple $\FF_\ell[\Gal_\QQ]$-module and $V_\ell^{\Gal_\QQ}=0$.   Since also $\dim_{\FF_\ell} V_\ell =4$, to prove the lemma we need only show that $V_\ell$ is the quotient of some $\FF_\ell[\Gal_\QQ]$-submodule of $H^2_{\et}(X_\Qbar,\FF_\ell(1))$.

Consider the Leray spectral sequence with $E_2$-terms $E_2^{p,q}= H^p_{\et}(\PP^1_{\Qbar}, R^q\tilde f_* \FF_\ell(1))$ which converges to $L^n= H^n_{\et}(X_\Qbar, \FF_\ell(1))$.    Let $E_\infty^{p,q}$ be the limiting value of the $\{E^{p,q}_r\}_r$.   There is thus a filtration $0\subseteq W_1 \subseteq W_2 \subseteq L^2=H^2_{\et}(X_\Qbar,\FF_\ell(1))$ of $\FF_\ell[\Gal_{\QQ}]$-modules such that $W_2/W_1$ is isomorphic to $E^{1,1}_\infty$.   Using that $E_2^{p,q}=0$ for $p>2$, we find that $E^{1,1}_\infty$ equals $E^{1,1}_2=H^1_{\et}(\PP^1_{\Qbar}, R^1\tilde f_*\FF_\ell(1))$.    So we need only show that the $\FF_\ell[\Gal_\QQ]$-modules $V_\ell$ and $H^1_{\et}(\PP^1_{\Qbar}, R^1\tilde f_*\FF_\ell(1))$ are isomorphic.   In particular, it suffices to show that the sheaves $j_!(E[\ell])$ and $R^1\tilde f_*\FF_\ell(1)$ of $\FF_\ell$-modules on $\PP^1_\QQ$ are isomorphic.     

Take any geometric point $s$ of $\PP^1_\QQ - U$.   The stalk of $R^1 \tilde{f}_*\FF_\ell(1)$ at $s$ is $H^1_{\et}(X_s,\FF_\ell(1))$ where $X_s$ is the fiber of $\tilde{f}$ above $s$.   The fiber $X_s$ is simply connected (it has Kodaira type $I_2^*$ or $I_4^*$) and hence the stalk of $R^1 \tilde{f}_*\FF_\ell(1)$ at $s$ vanishes.   

It thus suffices to show that $E[\ell]$ and $R^1 f_* \FF_\ell(1)= R^1 \tilde{f}_*\FF_\ell(1)|_U$ are isomorphic sheaves of $\FF_\ell$-modules on $U$.   
Since these sheaves are both locally constant and constructible, it suffices to prove that $E[\ell]_{\bbar\eta}$ and $(R^1 f_* \FF_\ell(1))_{\bbar\eta} = H^1_{\et}(E_{\bbar\eta},\FF_\ell(1))$ are isomorphic $\FF_\ell[\Gal(\bbar{\QQ(t)}/\QQ(t))]$-modules.   Indeed, if $\calE$ is an elliptic curve over \emph{any} field $k$ of characteristic $0$, then the group of $\ell$-torsion points in $
\calE(\kbar)$ is isomorphic as an $\FF_\ell[\Gal(\kbar/k)]$-modules with $H^1_{\et}(\calE_{\kbar},\FF_\ell(1))$.
\end{proof}

\bibliographystyle{plain}
\begin{bibdiv}
\begin{biblist}

\bib{Magma}{article}{
      author={Bosma, Wieb},
      author={Cannon, John},
      author={Playoust, Catherine},
       title={The {M}agma algebra system. {I}. {T}he user language},
        date={1997},
     journal={J. Symbolic Comput.},
      volume={24},
      number={3-4},
       pages={235\ndash 265},
        note={Computational algebra and number theory (London, 1993)},
}

\bib{MR2372809}{article}{
      author={Caruso, Xavier},
       title={Conjecture de l'inertie mod\'er\'ee de {S}erre},
        date={2008},
     journal={Invent. Math.},
      volume={171},
      number={3},
       pages={629\ndash 699},
}

\bib{Atlas}{book}{
      author={Conway, J.~H.},
      author={Curtis, R.~T.},
      author={Norton, S.~P.},
      author={Parker, R.~A.},
      author={Wilson, R.~A.},
       title={Atlas of finite groups},
   publisher={Oxford University Press},
     address={Eynsham},
        date={1985},
        note={Maximal subgroups and ordinary characters for simple groups, With
  computational assistance from J. G. Thackray},
}

\bib{MR2262856}{article}{
      author={Clark, Pete~L.},
       title={Galois groups via {A}tkin-{L}ehner twists},
        date={2007},
     journal={Proc. Amer. Math. Soc.},
      volume={135},
      number={3},
       pages={617\ndash 624},
}

\bib{MR757503}{article}{
      author={Cox, David~A.},
      author={Parry, Walter~R.},
       title={Representations associated with elliptic surfaces},
        date={1984},
     journal={Pacific J. Math.},
      volume={114},
      number={2},
       pages={309\ndash 323},
}

\bib{MR538682}{article}{
      author={Cox, David~A.},
      author={Zucker, Steven},
       title={Intersection numbers of sections of elliptic surfaces},
        date={1979},
     journal={Invent. Math.},
      volume={53},
      number={1},
       pages={1\ndash 44},
}

\bib{GaloisOeuvres}{article}{
      author={Galois, \'Evariste},
       title={{\OE}uvres math\'ematiques d'\'{E}variste {G}alois},
        date={1846},
     journal={Journal des math\'ematiques pures et appliqu\'ees},
      volume={XI},
       pages={381\ndash 444},
}

\bib{Khare:2009}{article}{
      author={Khare, Chandrashekhar},
      author={Wintenberger, Jean-Pierre},
       title={Serre's modularity conjecture. {I}},
        date={2009},
     journal={Invent. Math.},
      volume={178},
      number={3},
       pages={485\ndash 504},
}

\bib{MR1199685}{article}{
      author={Malle, Gunter},
       title={Polynome mit {G}aloisgruppen {${\rm PGL}_2(p)$} und {${\rm
  PSL}_2(p)$} \"uber {${\bf Q}(t)$}},
        date={1993},
     journal={Comm. Algebra},
      volume={21},
      number={2},
       pages={511\ndash 526},
}

\bib{MR559531}{book}{
      author={Milne, James~S.},
       title={\'{E}tale cohomology},
      series={Princeton Mathematical Series},
   publisher={Princeton University Press},
     address={Princeton, N.J.},
        date={1980},
      volume={33},
}

\bib{MR1711577}{book}{
      author={Malle, Gunter},
      author={Matzat, B.~Heinrich},
       title={Inverse {G}alois theory},
      series={Springer Monographs in Mathematics},
   publisher={Springer-Verlag},
     address={Berlin},
        date={1999},
}

\bib{MR0354654}{book}{
       title={Th\'eorie des topos et cohomologie \'etale des sch\'emas. {T}ome
  3},
      series={Lecture Notes in Mathematics, Vol. 305},
   publisher={Springer-Verlag},
     address={Berlin},
        date={1973},
        note={S{\'e}minaire de G{\'e}om{\'e}trie Alg{\'e}brique du Bois-Marie
  1963--1964 (SGA 4), Dirig{\'e} par M. Artin, A. Grothendieck et J. L.
  Verdier. Avec la collaboration de P. Deligne et B. Saint-Donat},
}

\bib{MR1715253}{article}{
      author={Ochiai, Tadashi},
       title={{$l$}-independence of the trace of monodromy},
        date={1999},
        ISSN={0025-5831},
     journal={Math. Ann.},
      volume={315},
      number={2},
       pages={321\ndash 340},
}

\bib{MR2363329}{book}{
      author={Serre, Jean-Pierre},
       title={Topics in {G}alois theory},
     edition={Second},
      series={Research Notes in Mathematics},
   publisher={A K Peters Ltd.},
     address={Wellesley, MA},
        date={2008},
      volume={1},
        note={With notes by Henri Darmon},
}

\bib{MR885783}{article}{
      author={Serre, Jean-Pierre},
       title={Sur les repr\'esentations modulaires de degr\'e {$2$} de {${\rm
  Gal}(\overline{\bf Q}/{\bf Q})$}},
        date={1987},
     journal={Duke Math. J.},
      volume={54},
      number={1},
       pages={179\ndash 230},
}

\bib{MR0387283}{article}{
      author={Serre, Jean-Pierre},
       title={Propri\'et\'es galoisiennes des points d'ordre fini des courbes
  elliptiques},
        date={1972},
     journal={Invent. Math.},
      volume={15},
      number={4},
       pages={259\ndash 331},
}

\bib{MR0344216}{book}{
      author={Serre, Jean-Pierre},
       title={A course in arithmetic},
   publisher={Springer-Verlag},
     address={New York},
        date={1973},
        note={Translated from the French, Graduate Texts in Mathematics, No.
  7},
}

\bib{MR0332725}{article}{
      author={Shih, Kuang-yen},
       title={On the construction of {G}alois extensions of function fields and
  number fields},
        date={1974},
     journal={Math. Ann.},
      volume={207},
       pages={99\ndash 120},
}

\bib{MR1211006}{article}{
      author={Shioda, Tetsuji},
       title={Some remarks on elliptic curves over function fields},
        date={1992},
     journal={Ast\'erisque},
      number={209},
       pages={12, 99\ndash 114},
        note={Journ{\'e}es Arithm{\'e}tiques, 1991 (Geneva)},
}

\bib{MR0236190}{article}{
      author={Serre, Jean-Pierre},
      author={Tate, John},
       title={Good reduction of abelian varieties},
        date={1968},
     journal={Ann. of Math. (2)},
      volume={88},
       pages={492\ndash 517},
}

\bib{MR0393039}{incollection}{
      author={Tate, J.},
       title={Algorithm for determining the type of a singular fiber in an
  elliptic pencil},
        date={1975},
   booktitle={Modular functions of one variable, {IV} ({P}roc. {I}nternat.
  {S}ummer {S}chool, {U}niv. {A}ntwerp, {A}ntwerp, 1972)},
   publisher={Springer},
     address={Berlin},
       pages={33\ndash 52. Lecture Notes in Math., Vol. 476},
}

\bib{MR0148760}{article}{
      author={Zassenhaus, Hans},
       title={On the spinor norm},
        date={1962},
        ISSN={0003-9268},
     journal={Arch. Math.},
      volume={13},
       pages={434\ndash 451},
}

\bib{Zywina-orthogonal}{unpublished}{
      author={Zywina, David},
       title={The inverse {G}alois problem for orthogonal groups},
        date={2012},
        note={preprint},
}

\end{biblist}
\end{bibdiv}

\end{document}